\documentclass[10pt, a4paper, reqno]{amsart}
\usepackage{preambolo}
\usepackage{adjustbox}

\begin{document}


\setcounter{secnumdepth}{3}

\setcounter{tocdepth}{2}

\makeatletter
\def\author@andify{%
  \nxandlist {\unskip ,\penalty-1 \space\ignorespaces}%
    {\unskip {} \@@and~}%
    {\unskip \penalty-2 \space \@@and~}%
}
\makeatother

\title{\textbf{Classification of homogeneous almost complex $4$-manifolds with non-degenerate torsion bundle}}

\author[Cristina Bozzetti]{Cristina Bozzetti}

\author[Costantino Medori]{Costantino Medori}

\address{Costantino Medori: Dipartimento di Scienze Matematiche, Fisiche e Informatiche, Unità di Matematica e
Informatica, Università degli Studi di Parma, Parco Area delle Scienze 53/A, 43124, Parma, Italy}
\email{costantino.medori@unipr.it}

\author[Lorenzo Sillari]{Lorenzo Sillari}

\address{Lorenzo Sillari: Dipartimento di Scienze Matematiche, Fisiche e Informatiche, Unità di Matematica e Informatica, Università degli Studi di Parma, Parco Area delle Scienze 53/A, 43124, Parma, Italy} 
\email{lorenzo.sillari@unipr.it}

\maketitle

\begin{abstract}
\noindent \textsc{Abstract.} We investigate the local and global geometry of almost complex $4$-manifolds admitting non-degenerate torsion bundle. The rigidity of these structures forces a parallelizable $J$-adapted double cover, which imposes severe topological constraints on the underlying manifold. Exploiting this rigidity, we give a complete classification in the homogeneous setting. We show that such a manifold is diffeomorphic either to a $4$-dimensional Lie group carrying an almost complex structure with non-degenerate torsion bundle, or to a product $L(4,1)\times\mathbb R$ or $L(4,1)\times\mathbb T$, where $L(4,1)$ is a lens space. We also determine exactly which real $4$-dimensional Lie algebras admit such a structure. Constructively, we realize every admissible algebra by an explicit invariant structure, thereby closing the existence question in dimension $4$. We also relate these structures to certain Engel structures that we call
Nijenhuis--Engel, and answer the resulting existence questions in the homogeneous case.
\end{abstract}

\blfootnote{  \hspace{-0.55cm} 
{\scriptsize 2020 \textit{Mathematics Subject Classification}. Primary: 32Q60, 53C15; Secondary: 53C30, 57S20. \\ 
\textit{Keywords: automorphisms group, almost complex $4$-manifolds, Engel structures, Nijenhuis tensor, non-degenerate torsion bundle, transformation groups.}\\

\noindent The second and third authors are partially supported by GNSAGA of INdAM and by the Project PRIN 2022 “Real and Complex Manifolds: Geometry and Holomorphic Dynamics” (code 2022AP8HZ9)}
}

\section{Introduction}\label{sec:intro}

By the classical Newlander--Nirenberg Theorem, an almost complex structure $J$ on a smooth $2n$-manifold $M$ is locally equivalent to the standard complex structure on $\C^n$ if and only if its Nijenhuis tensor $N_J$ vanishes identically. In particular, all complex structures are locally equivalent. In the non-integrable setting, however, the situation is drastically different. While maximally non-integrable structures are topologically abundant in higher dimensions, satisfying an $h$-principle \cite{CPS22}, their local geometry is particularly rigid. Generic almost complex structures possess local invariants and typically admit no local symmetries \cite{Kob95}. This rigidity is more pronounced in dimension $4$, where continuous groups of local automorphisms are exceptionally rare unless the structure is integrable or strongly homogeneous.
\vspace{.2cm}

Modern approaches treat the image of the Nijenhuis tensor as a characteristic differential distribution $\mathcal{V} \coloneqq \Ima N_J$, relating it to the existence of foliations \cite{CGGH21, CGG24} or embeddings \cite{DG17}. When the distribution is regular, we call $\V$ the \textit{torsion bundle}, which must be preserved by symmetries and pseudoholomorphic functions. In dimension $4$, we call $\V$ \textit{non-degenerate} if the Lie brackets of its sections generate a distribution $\mathcal{E}$ of rank $3$. In particular, if $\V$ is non-degenerate, then $J$ is maximally non-integrable. We say that $\V$ is \textit{bracket-generating} if Lie brackets of sections of $\mathcal{E}$ span the whole $TM$. In such a case, the inclusion $\V \subset \mathcal{E} \subset TM$ defines an Engel structure on $M$.

The geometric rigidity of a non-degenerate torsion bundle allows the construction of canonical local $J$-adapted frames. Gluing these frames globally gives rise to a parallelizable \textit{$J$-adapted double cover} \cite{BM17}
\[
\pi: (F,J) \longrightarrow (M,J),
\]
which is a principal $\Z_2$-bundle over $M$. This construction imposes severe topological constraints on the underlying manifold: a compact almost complex $4$-manifold admitting a non-degenerate torsion bundle must have vanishing Euler characteristic $\chi(M)$ and signature $\sigma(M)$, see Lemma \ref{lemma:zero}. These local and global properties are intertwined with an increasing hierarchy of genericity conditions, summarized in Figure \ref{fig:genericity} .

\begin{figure}[ht]
    \centering
    
    \begin{tikzcd}[row sep=large, column sep=large]
        \text{Engel Structure} \arrow[d, Rightarrow] \arrow[r, Leftrightarrow] & \text{Parallelizable} \arrow[d, Rightarrow] \\
        \text{Non-degenerate Torsion Bundle} \arrow[d, Rightarrow] \arrow[r, Rightarrow] & \chi (M) = \sigma (M) = 0 \arrow[d, Rightarrow]\\
        \text{Maximally Non-integrable} \arrow[r, Leftrightarrow] & 5\chi (M)+ 6\sigma (M) = 0
    \end{tikzcd}
    \caption{Genericity conditions and topological constraints for \textit{closed} almost complex $4$-manifolds.}\label{fig:genericity}
    
\end{figure}

In this paper, we exploit the rigidity of $J$-adapted double covers to obtain a complete classification of homogeneous almost complex $4$-manifolds with non-degenerate torsion bundle at the algebraic and topological level. First, using the classification of real $4$-dimensional Lie algebras, we determine exactly which of them carry an almost complex structure with non-degenerate torsion bundle: all but a short, explicit list of
exceptions, see Theorem \ref{thm:classification}.

\begin{thmx}
    The only $4$-dimensional Lie algebras not admitting an almost complex structure with non-degenerate torsion bundle are
    \begin{align*}
        4\g_1, \quad \g_{2,1} \oplus 2\g_1, \quad \g_{3,1} \oplus \g_1, \quad \g_{3,3} \oplus \g_1,
    \end{align*}
    the Lie algebras $\g_{4,2}^\alpha$ and $\g_{4,8}^\alpha$ for $\alpha=1$, and the Lie algebra $\g_{4,5}^{\alpha,\beta}$ for the choices of parameters $\alpha =1$, $\beta =1$, or $\alpha = \beta$.
\end{thmx}

For every admissible algebra we exhibit an explicit structure, and we compute whether its torsion bundle is non-degenerate or bracket-generating, see Examples \ref{ex:g'2} and \ref{ex:g'3}. These structures are left-invariant and descend to the corresponding Lie groups.

Then, by analyzing how the identity component $\Aut_0(M,J)$ lifts to the parallelizable cover $F$, we classify diffeomorphism types of homogeneous almost complex $4$-manifolds with non-degenerate torsion bundle. Our main differential result, Theorem \ref{thm:diffeotype}, is the following.

\begin{thmx}
Let $(M,J)$ be a connected homogeneous almost complex $4$-manifold with non-degenerate torsion bundle. Then $M$ is diffeomorphic to a $4$-dimensional Lie group admitting an almost complex structure with non-degenerate torsion bundle, or to the products $L(4,1) \times \R$, $L(4,1) \times \T$, where $L(4,1)$ is a lens space.
\end{thmx}

Theorems~A and~B together close the existence question in dimension $4$: every diffeomorphism type permitted by the topological classification is realized by an explicit invariant structure.
\vspace{.2cm}

Fix an almost complex structure $J$ on a parallelizable $4$-manifold and an Engel distribution $\mathcal{D} \subset \mathcal{E} \subset TM$. We call $\mathcal{D}$ a \emph{$J$-Engel} structure if $\mathcal{D}$ is $J$-invariant, and a \emph{Nijenhuis--Engel} structure if moreover $\mathcal{D}=\V$. In the integrable case, homogeneous $J$-Engel structures are classified in \cite{Zha18}, and the complex surfaces supporting one are determined in \cite{PP21}. This motivates asking which parallelizable $4$-manifolds admit a $J$-Engel, or a Nijenhuis--Engel, structure for some $J$. Our examples answer both questions for homogeneous almost complex $4$-manifolds.
\vspace{.2cm}

Our algebraic classification complements the differential one and provides a robust testing ground for several future projects on the understanding of canonical local forms and Engel distributions on homogeneous spaces. As a tool of independent interest, in Appendix \ref{sec:appendix} we classify the non-central subgroups of order $2$ of all connected $4$-dimensional Lie groups, a result essential in the proof of our topological classification.

\section{Torsion bundle of almost complex manifolds}\label{sec:prel}

In this section we recall basic facts on the torsion bundle of almost complex manifolds, giving special attention to the $4$-dimensional case. The main references for non-original results are \cite{BM17}, \cite{Kru04}, \cite{Kru98a} and \cite{Kru98b}.

\subsection{Nijenhuis tensor and torsion bundle.}

Let $(M,J)$ be a connected almost complex $2m$-manifold (not necessarily compact). The \textit{Nijenhuis tensor} of $J$ is defined by
\[
N_J(X,Y) \coloneqq [JX, JY] - J[JX, Y]- J[X,JY] - [X,Y],
\]
for $X$ and $Y$ vector fields on $M$. If $N_J=0$, we say that $J$ is \textit{integrable}. For all $p \in M$, the image of the Nijenhuis tensor is given by
\[
\V_p \coloneqq \Ima (N_J|_p) \subseteq T_p M,
\]
and it defines a distribution $\V$ in $TM$. If the distribution is regular, then $\V$ is a sub-bundle of $TM$, called the \textit{torsion bundle} of $(M,J)$. Since the Nijenhuis tensor satisfies the further symmetry
\[
N_J( JX, Y) = - J N_J(X,Y) = N_J(X,JY),
\]
the subspace $\V_p$ is $J$-invariant and its real dimension must be even. The \textit{(real) rank of the Nijenhuis tensor} (that differs by a factor of $2$ from the complex rank considered in \cite{CPS22}, \cite{Sil26} and \cite{ST23b}) is the real dimension
\[
\rk N_J|_p \coloneqq \dim_\R \V_p.
\]
The rank of $N_J$ measures how far $J$ is from being integrable, or, equivalently, the number of local independent pseudoholomorphic functions on $M$ in a neighborhood of a point, see \cite{Mus86} or \cite{HK12}. Complex structures have $N_J|_p =0$ for all $p \in M$ \cite{NN57}. In such a case, the torsion bundle identically vanishes. 

\subsection{Generic almost complex structures.}\label{sec:generic}

We focus on structures of constant rank, for which we have a well-defined torsion bundle. If $2m = 2$, every structure is necessarily integrable by dimensional reasons, and the torsion bundle vanishes. If $2m= 4$, the only possible values for the rank are $0$ and $2$. If $2m \ge 6$, the possible values for the rank of the torsion bundle are $0, 2, 4, \ldots,2m$. Structures with maximal rank are called \textit{maximally non-integrable}, and structures whose rank is never-vanishing are called \textit{everywhere non-integrable}. In dimension $4$ the two notions coincide, while in higher dimensions they differ. We point out that ``\textit{having rank at least $2k$ at every point}'', $k \in \N$, is a genericity condition for almost complex structures. In particular, maximally non-integrable and everywhere non-integrable structures are generic \cite{Sil26}.
\vspace{.2cm}

The Nijenhuis tensor is used by Kruglikov \cite{Kru98a} in the definition of another genericity condition. An almost complex structure is said to be \textit{of general position} if, for all $p \in M$ and for almost every $X_p \in T_pM$, the kernel of the map
\[
N_J(X_p, \cdot ) \colon T_pM \longrightarrow T_p M
\]
is two-dimensional. Observe that this is the minimal dimension for such a space, since the vectors $X_p$ and $JX_p$ always belong to $\ker N_J (X_p, \cdot)$. We prove that maximally non-integrable structures and structures of general position are closely related.

\begin{lemma}
    Let $(M,J)$ be an almost complex $2m$-manifold. If $2m = 4$, then $J$ is maximally non-integrable if and only if it is of general position. If $2m = 6$ and $J$ is maximally non-integrable, then it is of general position.
\end{lemma}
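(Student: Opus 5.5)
The proof reduces to pointwise linear algebra at a fixed $p \in M$. The idea is to view $N_J|_p$ as a complex-linear map out of the second exterior power of $T_pM$, viewed as a complex vector space, and then to compare dimensions.

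\textbf{Setup.} Fix $p$ and regard $V \coloneqq T_pM$ as a complex vector space of dimension $m$ via $J$. By the symmetry $N_J(JX,Y) = -JN_J(X,Y) = N_J(X,JY)$, the map $N \coloneqq N_J|_p$ is real bilinear, skew, and complex antilinear in each argument. Hence $N$ factors through a complex-linear map
\[
\widetilde N \colon \Lambda^2_\C \overline V \longrightarrow V, \qquad \widetilde N(\bar X \wedge \bar Y) = N(X,Y),
\]
where $\overline V$ is $V$ with the conjugate complex structure. The image $\V_p$ is $J$-invariant, so it is the complex span of the values $N(X,Y)$, which is exactly $\Ima \widetilde N$. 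Consequently, for a fixed nonzero $X$,
\[
\ker N(X,\cdot) = \{Y : \bar X \wedge \bar Y \in \ker \widetilde N\}.
\]
This always contains the complex line $\C X = \operatorname{span}_\R\{X, JX\}$.

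\textbf{Case $2m=4$.} Here $\Lambda^2_\C \overline V \cong \C$. Choose a complex basis $e_1, e_2$ and set $v \coloneqq N(e_1,e_2)$. For $X = ae_1 + be_2$ and $Y = ce_1 + de_2$ with complex coefficients,
\[
N(X,Y) = \overline{(ad - bc)}\, v .
\]
\emph{If $v \neq 0$} (rank $2$), then for every $X \neq 0$ the kernel of $N(X,\cdot)$ is $\{Y : ad - bc = 0\} = \C X$, which is real $2$-dimensional. So the structure is of general position at $p$, in fact for every nonzero $X_p$.

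\emph{If $v = 0$} at some point $p$, then $N|_p = 0$. Every $N(X_p,\cdot)$ then has $4$-dimensional kernel, so general position fails at $p$.

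Since in dimension $4$ maximal non-integrability means rank $2$ at every point, the two conditions are equivalent.

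\textbf{Case $2m=6$.} Now $\dim_\C \Lambda^2_\C \overline V = 3 = \dim_\C V$. Maximal non-integrability says $\widetilde N$ is surjective, hence an isomorphism. Therefore $N(X,Y) = 0$ if and only if $\bar X \wedge \bar Y = 0$. For $X \neq 0$ this happens if and only if $Y \in \C X$. So $\ker N(X_p,\cdot)$ is real $2$-dimensional for every nonzero $X_p$, which is general position.

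\textbf{Main obstacle.} The only delicate point is the factorization through $\Lambda^2_\C \overline V$. This requires checking two things from the stated symmetry, taking care with conjugations:
\begin{itemize}
\item complex antilinearity of $N$ in each slot, which is what makes $\widetilde N$ well defined and complex linear;
\item that $\V_p$ equals the complex span of the values $N(X,Y)$, and not merely their real span.
\end{itemize}
Once these hold, both cases are dimension counts.
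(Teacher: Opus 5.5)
Your proof is correct, but it takes a different route from the paper's.

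\textbf{The paper's route.} It argues by contradiction using a basis. Suppose some nonzero $X_p$ has $\dim_\R \ker N_J(X_p,\cdot) > 2$. Pick $Y_p \notin \langle X_p, JX_p\rangle$ in that kernel and complete $\{X, JX, Y, JY\}$ by $V_1, JV_1, \ldots, V_{m-2}, JV_{m-2}$. The only generators of the image that can survive are $N_J(X,V_j)$, $N_J(Y,V_j)$ and $N_J(V_j,V_k)$. Counting them gives $\rk N_J|_p \le m^2-m-2 < 2m$ for $m = 2,3$, a contradiction. The converse in dimension $4$ is the remark that $N_J|_p = 0$ makes every kernel $4$-dimensional.

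\textbf{Your route.} You encode $N_J|_p$ as a complex-linear map $\widetilde N \colon \Lambda^2_\C \overline V \to V$ and reduce both cases to dimension counts. In dimension $4$, the formula $N(X,Y) = \overline{(ad-bc)}\,v$ settles both implications at once. In dimension $6$, maximal rank makes $\widetilde N$ an isomorphism, so $\ker N_J(X_p,\cdot) = \C X_p$ for every nonzero $X_p$. The two points you flag as delicate follow at once from $N_J(JX,Y) = -JN_J(X,Y) = N_J(X,JY)$. Indeed $N_J((a+bJ)X,Y) = (a-bJ)N_J(X,Y)$, and the real span of the values is already $J$-stable.

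\textbf{Comparison.} Your argument gives a basis-free description of the kernel. It also explains structurally why the statement stops at $2m = 6$: for $m \ge 4$ one has $\dim_\C \Lambda^2_\C \overline V = m(m-1)/2 > m$, so injectivity of $\widetilde N$ is no longer available. The paper's argument is more elementary, with no conjugate space or antilinearity bookkeeping. Its explicit bound $m^2-m-2$ shows just as directly where the estimate stops being useful.
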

\begin{proof}
    Suppose that $J$ is maximally non-integrable. By contradiction, if $J$ is not of general position at $p \in M$, then there exists $X_p$ such that $\dim_\R \ker N_J(X_p,\cdot) >2$. In particular, there exists a vector $Y_p \notin \langle X_p, JX_p \rangle$ such that the space generated by $X_p$, $JX_p$, $Y_p$ and $JY_p$ is contained in $\ker N_J(X_p,\cdot)$. Complete the kernel to a basis of $T_pM$:
    \[
    \{ X, JX, Y, JY, V_1, JV_1, \ldots, V_{m-2}, JV_{m-2} \}_p.
    \]
    Then, the image of the Nijenhuis tensor is generated by vectors of the form $N_J(X,V_j)$, $N_J(Y,V_j)$, $N_J(V_j,V_k)$, and we can estimate its rank by
    \[
    \rk N_J|_p \le 2(m-2) + 2(m-2) + (m-2)(m-3) = m^2 -m -2.
    \]
    Recalling that the rank is an even number, we have that, for $m = 2,3$, it must be strictly smaller than $2m$, hence it is not maximal, contradicting the assumption of maximal non-integrability.
    For the converse implication, if $2m = 4$ and $J$ is of general position at $p \in M$, then $N_J|_p \neq 0$. If this was not the case, then we could take a basis $\{ X, JX, W, JW \}_p$ such that $N_J(X,W)_p =0$, giving $\dim_\R \ker N_J(X_p,\cdot)=4$ and leading to a contradiction.
\end{proof}

In dimension at least $10$, every almost complex structure on a \textbf{compact} manifold can be perturbed to be maximally non-integrable \cite{Sil26}, hence we can approximate structures of general position with maximally non-integrable structures.

\subsection{The \texorpdfstring{$4$}{}-dimensional case.}

Let $(M,J)$ be an almost complex $4$-manifold, and let $\V$ be the distribution defined by its Nijenhuis tensor. Suppose that $J$ is maximally non-integrable (or, equivalently, everywhere non-integrable). Hence, the distribution $\V$ is regular of constant rank $2$ and the torsion bundle $\V \subset TM$ is well-defined. Consider the distributions $\V^{(k)}$, $k \ge 1$, defined by
\[
\V^{(1)} \coloneqq \V, \qquad \V^{(k)} \coloneqq \V^{(k-1)} + [\V^{(1)}, \V^{(k-1)}] \quad \text{for $k \ge 2$}.
\]
This provides a series of inclusions
\[
\V^{(1)} \subseteq \V^{(2)} \subseteq \ldots \subseteq TM.
\]
The series stabilizes, i.e., one has $\V^{(k)} = \V^{(k-1)}$, after $0, 1$ or $2$ steps, depending on how the Lie bracket and the image of the Nijenhuis tensor interact with each other. The torsion bundle is said to be \textit{non-degenerate} if the series stabilizes after $1$ or $2$ steps, cf.\ \cite{KL10}. It is said to be \textit{bracket-generating} (or \textit{non-holonomic}) if the series stabilizes after $2$ steps, generating the whole tangent bundle. Having non-degenerate torsion bundle or having bracket-generating torsion bundle are, locally, genericity conditions for $J$, cf.\ \cite{Kru98a}.

\subsection{Non-degenerate torsion bundle.}\label{sec:cover} Almost complex $4$-manifolds with non-degenerate torsion bundle admit a local $J$-adapted frame. We recall here the main steps of the construction, and we refer to Section 6.1 in \cite{Kru98a}, Sections 1--3 in \cite{Kru04} or Sections 2 and 3 in \cite{BM17}, for more details. 

Fix $p \in M$ and let $X_p$ and $JX_p$ be generators of $\V_p$. Since the torsion bundle is non-degenerate, we have that $\V^{(2)}_p$ is $3$-dimensional and generated by the vectors $X_p, JX_p$ and $T_p \coloneqq [X,JX]_p$, with $T_p \notin \V_p$. Since $\V_p$ is $J$-invariant, we also have that $JT_p \notin \V_p$, which gives
\[
T_pM = \langle X_p, JX_p, T_p, JT_p \rangle.
\]
While the condition of non-degeneracy is independent of the choice of generators of $\V_p$, the local frame depends on it. However, up to a choice of sign, one can make a canonical choice of $X$ by fixing a vector field $A$ in the image of $N_J$ such that $A_p \neq 0$ and considering the map
\begin{align*}
\tau^A_p \colon \V_p &\longrightarrow \V_p \\
Y_p &\longmapsto N_J(Y,[A, JA])_p.
\end{align*}
Let $\pm \lambda_p$ be the eigenvalues of the map $\tau^A_p$. The corresponding eigenspaces $\V^\pm_p$ are independent of the choice of $A$ and they satisfy $J\V^+_p = \V^-_p$. The canonical choice of $X_p$ is made taking $X_p$ as the unique (up to sign) eigenvector in $\V^+_p$ for which $\lambda_p = 1$. We call the local frames built via this procedure \textit{$J$-adapted frames}, and we denote them by
\begin{align*}
f^+_p \colon \R^4 &\longrightarrow T_pM \\
\{ e_1, e_2, e_3, e_4 \} &\longmapsto \{ X_p, JX_p, T_p, JT_p \}
\end{align*}
and 
\begin{align*}
f^-_p \colon \R^4 &\longrightarrow T_pM \\
\{ e_1, e_2, e_3, e_4 \} &\longmapsto \{ -X_p, -JX_p, T_p, JT_p \},
\end{align*}
where the $e_j$, $j =1,\ldots, 4$, denote the standard basis of $\R^4$.

Pseudoholomorphic maps between almost complex manifolds send $J$-adapted frames into $J$-adapted frames. In particular, the only automorphisms of $(M,J)$ that fix a point $p$ are $\pm \Id$, see Corollary 3.7 in \cite{BM17}. This endows $(M,J)$ with a $\Z_2$-structure. 

The double cover $\pi \colon F \rightarrow M$, with fiber $\pi^{-1} (p) \coloneqq \{ f^+_p, f^-_p\}$ naturally arises as the principal bundle associated with the reduction of the structure group of $M$ to $\Z_2$. The $\Z_2$-structure on $M$ induced by $J$-adapted frames lifts to an $\{ e \}$-structure on $F$. Hence, $F$ is parallelizable and it inherits an almost complex structure obtained by lifting $J$, that we still denote by $J$. We call the cover
\[
\pi \colon (F,J) \longrightarrow (M,J)
\]
the \textit{$J$-adapted double cover} of $(M,J)$. By construction, $(F,J)$ is uniquely determined. In general, $F$ might not be connected, and we have the following.

\begin{lemma}\label{lemma:parallelizable}
  Let $(M,J)$ be a connected almost complex manifold and let $F$ be its $J$-adapted double cover. If $F$ has two connected components, then $M$ is parallelizable.  
\end{lemma}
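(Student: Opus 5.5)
If $F$ has two components, I will show that $\pi$ restricted to one component is a diffeomorphism onto $M$. That component then carries a global $\{e\}$-structure, i.e.\ a global frame, which pushes forward to a global frame on $M$.

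\emph{Step 1: each component is a section.} Since $\pi\colon F\to M$ is a principal $\mathbb Z_2$-bundle, it is a $2$-sheeted covering map. The deck involution $\iota\colon f^+_p\mapsto f^-_p$ exchanges the points of each fiber. Let $F_1,F_2$ be the two connected components. The restriction $\pi|_{F_1}$ is a covering of the connected manifold $M$, hence surjective, with fibers of cardinality $1$ or $2$. That cardinality is locally constant, hence constant on $M$.

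\emph{Step 2: it is one-sheeted.} If $\pi|_{F_1}$ were $2$-sheeted, then $F_1=\pi^{-1}(M)=F$, and $F_2$ would be empty, which is a contradiction. So $\pi|_{F_1}\colon F_1\to M$ is a bijective covering map. It is therefore a diffeomorphism, and its inverse $s\colon M\to F_1$ is a smooth global section of $F$. Since $\iota$ is a homeomorphism, we also get $F_2=\iota(F_1)$.

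\emph{Step 3: the global frame.} A point of $F$ over $p$ is by definition a frame $f^\pm_p\colon\mathbb R^4\to T_pM$. The section therefore gives, for every $p$, a frame
\[
s(p)=\{\pm X_p,\pm JX_p,T_p,JT_p\}
\]
of $T_pM$. Here the sign is chosen continuously, and in fact smoothly, since $s$ is smooth. The smoothness of $p\mapsto s(p)(e_j)$ follows from the smooth structure on $F$ as a subbundle of the frame bundle. Equivalently, one can pull back along $s$ the tautological parallelism of $F$ defined by the $\{e\}$-structure. This yields four pointwise linearly independent smooth vector fields on $M$, so $M$ is parallelizable. Concretely, if $\theta$ is the tautological $\mathbb R^4$-valued $1$-form on $F$, then $s^*\theta$ is a global coframe on $M$.

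The only point that needs care is Step 3: one must check that the frame determined by the section is genuinely smooth. This is exactly the smoothness of $F$ as a reduction of the frame bundle, and it is part of the construction recalled above. The topological part in Steps 1 and 2 is elementary covering theory. Note that no sign ambiguity survives: the obstruction to choosing $\pm X$ globally is precisely the nontriviality of the double cover, and it vanishes when $F$ is disconnected.
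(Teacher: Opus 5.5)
Your proof is correct and follows essentially the same route as the paper: both show that one component of $F$ is mapped diffeomorphically onto $M$ by $\pi$, and then transfer the parallelism of that component to $M$. The paper pushes the parallelism forward by $\pi_*$, while you pull back the tautological coframe along the section $s$, which amounts to the same thing. Your covering-theory argument for one-sheetedness is also somewhat more careful than the paper's brief sketch using the local sheets $V$ and $W$.
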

\begin{proof}
    Fix $p \in M$. Then there exists an open neighborhood $U$ of $p$ such that $\pi^{-1} (U) = V \cup W$, with $V \cap W = \emptyset$, $f^+_p \in V$ and $f^-_p \in W$. Since $F$ has two connected components and $M$ is connected, the open sets $V$ and $W$ lie in different components of $F$. Then one of the components of $F$ is obtained by gluing the frames $f^+_p$ via the identity map and is mapped pseudo-biholomorphically onto $M$. If $\{ X_1, \ldots, X_4\}$ is a parallelism for that component, then $\{ \pi_* X_1, \ldots, \pi_* X_4 \}$ is a parallelism for $M$, and $F$ can be identified with the disjoint union of two copies of $M$.
\end{proof}

The existence of the double cover has strong constraints on the topology of $M$. Denote by $\chi(M)$ and $\sigma(M)$ the Euler characteristic and the signature of $M$, respectively.

\begin{lemma}\label{lemma:zero}
    Let $(M,J)$ be a compact almost complex $4$-manifold whose torsion bundle is non-degenerate. Then $\chi(M) = \sigma(M)=0$.
\end{lemma}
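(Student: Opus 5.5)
We argue through the $J$-adapted double cover $\pi \colon (F,J) \to (M,J)$ built above. Since $M$ is compact, $F$ is a compact $4$-manifold and $\pi$ is a two-sheeted covering, possibly disconnected. Euler characteristic and signature are both multiplicative under finite coverings. For $\chi$ this is elementary: lift a triangulation. For $\sigma$ it follows from Hirzebruch's signature theorem, $\sigma = \tfrac13 p_1[\,\cdot\,]$, together with $\pi^*p_1(M) = p_1(F)$ and $\deg \pi = 2$. The same identities hold if $F$ is the disjoint union of two copies of $M$, as in Lemma \ref{lemma:parallelizable}, with $\chi(F) = 2\chi(M)$ and $\sigma(F) = 2\sigma(M)$. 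So it suffices to prove $\chi(F) = \sigma(F) = 0$. We note that $F$ is oriented: the almost complex structure $J$ lifts to $F$ and induces the orientation compatible with the one on $M$.

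The key input is that $F$ is parallelizable, via the canonical $\{e\}$-structure given by the lifted $J$-adapted frames $\{X, JX, T, JT\}$. We then read off the invariants as follows.
\begin{enumerate}
\item A nowhere-vanishing vector field, for instance the first frame vector, exists on the compact manifold $F$. By Poincaré--Hopf, $\chi(F) = 0$.
\item Since $TF$ is trivial, all its Pontryagin classes vanish. In particular $p_1(F) = 0$, so by the signature theorem $\sigma(F) = \tfrac13 p_1(F)[F] = 0$.
\end{enumerate}
Dividing by $2$ gives $\chi(M) = \sigma(M) = 0$.

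An alternative route avoids the signature theorem for the covering step. Work directly with the complex vector bundle $(TM,J)$. The torsion bundle $\V$ is a $J$-complex line subbundle, and $T$ spans a complementary complex line. Locally $X$ and $T$ are defined only up to a common sign, so $TM \cong \V \oplus \V'$ as complex bundles, where $\V'$ is the $J$-span of $T$. The map $X \mapsto T = [X,JX]$ is quadratic in $X$, so it is invariant under the sign ambiguity. This shows that $\V'$ has a global nonvanishing section, hence is trivial. Likewise $\V$ has a global section $X \otimes X$ of $\V^{\otimes 2}$, so $2c_1(\V) = 0$ rationally.

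Consequently $c_1(M)$ and $c_2(M) = c_1(\V)c_1(\V')$ vanish rationally. Then $\chi(M) = c_2[M] = 0$, and $p_1 = c_1^2 - 2c_2 = 0$ gives $\sigma(M) = 0$.

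The main obstacle is the justification of multiplicativity of $\sigma$ under the (possibly non-orientation-issue-free) double cover. I expect it to be harmless, because the covering map is a local diffeomorphism preserving $J$, hence orientation. If this step causes trouble, the rational Chern class argument above bypasses it entirely, and I would present that as the primary proof.
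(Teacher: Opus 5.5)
Your main argument is the paper's own proof: $\chi$ and $\sigma$ double under the unbranched $J$-adapted double cover $F\to M$, and both vanish on the parallelizable $F$; your justifications (Poincar\'e--Hopf, $p_1(F)=0$, and $\pi$ being orientation-preserving of degree $2$ because it is a local $J$-biholomorphism) supply exactly what the paper leaves implicit. Your backup Chern-class argument is also correct, since $T=[X,JX]$ is invariant under $X\mapsto -X$ and so is a global nowhere-vanishing field on $M$ itself, but it is not needed.
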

\begin{proof}
    Since $F$ is parallelizable, we have that $\chi(F) = \sigma (F) =0$. The Euler characteristic and signature of $F$ are related to those of $M$ by the equations
    \[
    \chi(F) = 2 \chi(M), \qquad \sigma(F) = 2 \sigma(M),
    \]
    since $F$ is an unbranched double cover of $M$, proving our claim.
\end{proof}

\subsection{Bracket-generating torsion bundle.} 
In contrast to what happens in dimension at least $6$, where every almost complex manifold admits everywhere non-integrable structures \cite{CPS22}, in dimension $4$ there are topological obstructions to the existence of everywhere (equivalently, maximally) non-integrable structures. By a result of Armstrong \cite{Arm96}, a \textbf{closed} almost complex $4$-manifold $M$ admits a maximally non-integrable structure if and only if $5\chi(M) + 6 \sigma(M) =0$. If we also impose non-degeneracy of the torsion bundle, we get that the manifold is parallelizable up to a double cover, as built in Section \ref{sec:cover}. Finally, we see that if the torsion bundle is bracket-generating, then the manifold has to be parallelizable.

\begin{lemma}
    Let $(M,J)$ be an almost complex $4$-manifold such that the torsion bundle of $J$ is bracket-generating. Then $M$ is parallelizable.
\end{lemma}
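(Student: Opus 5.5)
The plan is to show that the torsion bundle, being bracket-generating, gives an Engel flag $\V \subset \mathcal{E} \subset TM$. I would then split $TM$ into line bundles along this flag and check that every line bundle in the splitting is trivial. The almost complex structure supplies the orientations needed to trivialize them; compared with the general Engel setting, where only a double cover is known to be parallelizable, this is the extra input.

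\emph{Setting up the flag.} Set $\mathcal{D} \coloneqq \V$, which has rank $2$, and $\mathcal{E} \coloneqq \V^{(2)}$, which has rank $3$. Bracket-generation says $[\mathcal{E},\mathcal{E}] = TM$. The Lie bracket induces tensorial isomorphisms
\[
\Lambda^2 \mathcal{D} \xrightarrow{\ \cong\ } \mathcal{E}/\mathcal{D}, \qquad \Lambda^2 (\mathcal{E}/\mathcal{W}) \supseteq \mathcal{D}/\mathcal{W}\otimes \mathcal{E}/\mathcal{D} \xrightarrow{\ \cong\ } TM/\mathcal{E},
\]
where $\mathcal{W} \subset \mathcal{D}$ is the characteristic line field, i.e.\ the kernel of the bracket map $\mathcal{E} \times \mathcal{E} \to TM/\mathcal{E}$. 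I would recall the standard Engel facts: $\mathcal{W}$ is a rank-one subbundle of $\mathcal{D}$ with $[\mathcal{W},\mathcal{E}] \subseteq \mathcal{E}$, and the map $\mathcal{D}/\mathcal{W} \otimes \mathcal{E}/\mathcal{D} \to TM/\mathcal{E}$, $(Y,Z) \mapsto [Y,Z] \bmod \mathcal{E}$, is well defined, tensorial and an isomorphism. Both facts are pointwise linear algebra of the skew form $\Lambda^2 \mathcal{E} \to TM/\mathcal{E}$. That form has rank $2$ on the $3$-dimensional space $\mathcal{E}_p$, so its kernel is a line, and that line lies in $\mathcal{D}$ because $\mathcal{E} \neq \mathcal{D}$.

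\emph{Trivializing the pieces.} Choosing a complement at each stage, which can be done by picking a metric, gives
\[
TM \cong \mathcal{W} \oplus \mathcal{D}/\mathcal{W} \oplus \mathcal{E}/\mathcal{D} \oplus TM/\mathcal{E}.
\]
Since $\mathcal{D} = \V$ is $J$-invariant, it is canonically oriented by $J$, so $\Lambda^2\mathcal{D}$ is trivial; hence $\mathcal{E}/\mathcal{D}$ is trivial. Next, $\Lambda^2 \mathcal{D} \cong \mathcal{W} \otimes \mathcal{D}/\mathcal{W}$ is trivial. For real line bundles this forces $\mathcal{W} \cong \mathcal{D}/\mathcal{W}$; call this common line bundle $L$. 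Then $TM/\mathcal{E} \cong \mathcal{D}/\mathcal{W} \otimes \mathcal{E}/\mathcal{D} \cong L$. Altogether,
\[
TM \cong L \oplus L \oplus \underline{\R} \oplus L .
\]

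\emph{Killing $L$.} Comparing first Stiefel--Whitney classes gives $0 = w_1(TM) = 3w_1(L) = w_1(L)$, because $M$ is oriented by $J$. A real line bundle with $w_1 = 0$ is trivial, so $L$ is trivial and $TM \cong \underline{\R}^4$, i.e.\ $M$ is parallelizable.

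The main obstacle is the Engel linear algebra. One must verify carefully that the characteristic line field $\mathcal{W}$ is a smooth subbundle of $\mathcal{D}$, and that the bracket map $\mathcal{D}/\mathcal{W}\otimes\mathcal{E}/\mathcal{D}\to TM/\mathcal{E}$ is tensorial and non-degenerate. This is standard, but I would spell it out using a local frame $X, JX, T=[X,JX], [X,T], [JX,T]$, as in Section~\ref{sec:cover}. Everything after that is bookkeeping with line bundles.
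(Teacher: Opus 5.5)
Your proof is correct and follows the same route as the paper: bracket-generation yields the Engel flag $\V\subset\mathcal{E}\subset TM$, and this flag trivializes $TM$. Where the paper only invokes Vogel's result, you carry out the line-bundle bookkeeping and state explicitly that $J$ orients both $\mathcal{D}=\V$ and $M$. That is exactly the hypothesis the citation silently needs, since an Engel structure by itself does not force parallelizability. One small correction: the characteristic line lies in $\mathcal{D}$ because $\mathcal{D}$ is a $2$-dimensional isotropic subspace for the rank-$2$ form on $\mathcal{E}_p$ (as $[\mathcal{D},\mathcal{D}]\subseteq\mathcal{E}$), not because $\mathcal{E}\neq\mathcal{D}$.
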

\begin{proof}
    Since the torsion bundle $\V$ is bracket-generating, there is a sequence of distributions
    \[
    \V \subset \mathcal{E} \subset TM,
    \]
    where $\mathcal{E} = \V + [\V, \V]$ is a sub-bundle of rank $3$ and $TM = \mathcal{E} + [\mathcal{E}, \mathcal{E}]$. Hence, the plane distribution given by $\mathcal{D} = \V$ defines an Engel structure on $M$. Existence of such a distribution implies that $M$ is parallelizable, cf.\ \cite{Vog09}.
\end{proof}
Existence of an Engel structure has the important consequence that the distribution $\V$ has a local normal form that allows us to choose a preferred orientation on a line distribution, called the \textit{characteristic line}, which is orientable \cite{Vog09}. This allows us to eliminate the sign ambiguity in the choice of $J$-adapted local frame and to glue the frame from local to global, endowing $M$ with a parallelism. In this setting, the Engel structure is intrinsically $J$-invariant, meaning $J\mathcal{D} = \mathcal{D}$. In the literature such a distribution is called a \textit{$J$-Engel structure}, see \cite{Zha18} or \cite{PP21}. Motivated by the study of $J$-Engel structures, we give the following.

\begin{definition}
An Engel structure $\mathcal{D}$ on a parallelizable $4$-manifold $M$ is called Nijenhuis--Engel if there exist $J$ on $M$ such that $\V = \mathcal{D}$.
\end{definition}

There are two natural questions that arise in this context.
\vspace{.2cm}

\textbf{Question:} which parallelizable $4$-manifolds admit a $J$-Engel structure for some almost complex structure $J$? Which ones admit a Nijenhuis--Engel structure?
\vspace{.2cm}

Our question will be extensively investigated with explicit examples in Section \ref{sec:algebras}. They provide a full answer in the case of homogeneous almost complex $4$-manifolds and invariant almost complex structures. Now we provide a benchmark example on $S^1 \times S^3$.

\begin{example}[\textbf{Nijenhuis--Engel distribution on $S^1 \times S^3$}]\label{ex:s1s3} The Lie algebra of $S^1 \times S^3$ is
\[
\mathfrak{so}(3) \oplus \R = \langle e_1, e_2, e_3, e_4 \rangle,
\]
with structure equations $[e_1, e_2] = e_3$, $[e_2, e_3] = e_1$, $[e_3, e_1] = e_2$. Consider the almost complex structure defined by $J e_1 = e_2$, $J e_3 = e_4 + e_1$, $J e_4 = -e_3 - e_2$. We easily see that $\V = \langle e_3, e_4 + e_1 \rangle$, $\V^{(2)} = \langle e_3, e_4 + e_1, e_2 \rangle$ and $\V^{(3)} = \langle e_3, e_4 + e_1, e_2, e_1 \rangle \cong \mathfrak{so}(3) \oplus \R$, providing a bracket-generating torsion bundle, hence a Nijenhuis--Engel distribution. \hfill $\blacksquare$    
\end{example}

\subsection{The automorphism group.}

Let $(M,J)$ be an almost complex $2m$-manifold and denote by $\Aut (M,J)$ the set of automorphisms of $(M,J)$ that are also pseudoholomorphic maps. Denote by $\Aut_0 (M,J)$ the connected component of the identity in $\Aut (M,J)$. An \textit{infinitesimal automorphism} of $(M,J)$ is a vector field $V$ such that
\[
[V, JX] = J[V,X]
\]
for every vector field $X$, i.e., such that $\mathcal{L}_V J = J \mathcal{L}_V$. The \textit{symmetry algebra} at $p \in M$ consists  of germs of infinitesimal automorphism at $p$ and it is denoted by $\aut_p (M,J)$, while its subset of elements that fix $p$ is the \textit{isotropy algebra}, and it is denoted by $\aut_p^0 (M,J)$.

For an in-depth study of the pseudo-group of local symmetries in arbitrary dimension, we refer to \cite{BKW63}, \cite{Kob95}, \cite{KW16}, \cite{KW17} and \cite{Kru14}. Here we confine ourselves to the $4$-dimensional case. We recall the following result, contained in Theorem C in \cite{Kru14} and in Theorem 3.10 in \cite{BM17}, see also \cite{KL10}.

\begin{theorem}\label{thm:auto:group}
    Let $(M,J)$ be an almost complex $4$-manifold whose torsion bundle is non-degenerate. Then $G= \Aut_0(M,J)$ is a Lie group of dimension at most $4$, and $G$ is $4$-dimensional if and only if $M$ is a homogeneous space for $G$ and $J$ is an invariant almost complex structure.
\end{theorem}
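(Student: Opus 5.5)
The plan is to lift everything to the $J$-adapted double cover $\pi\colon(F,J)\to(M,J)$ of Section~\ref{sec:cover}, where the $\Z_2$-structure becomes an $\{e\}$-structure. The statement then reduces to the classical theory of automorphism groups of $\{e\}$-structures.

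First, Lie group structure and the bound $\dim G\le 4$. Every $\phi\in\Aut(M,J)$ maps $J$-adapted frames to $J$-adapted frames, so it lifts to a diffeomorphism $\tilde\phi$ of $F$, namely $f\mapsto d\phi\circ f$. This lift preserves the global parallelism of $F$ and commutes with the deck involution. Conversely, a parallelism-preserving diffeomorphism of $F$ commuting with the involution descends to an automorphism of $(M,J)$; this is the standard $G$-structure correspondence. By Kobayashi's theorem, the automorphism group of an $\{e\}$-structure on an $n$-manifold is a Lie group of dimension at most $n$ acting freely on it, and its orbits are embedded. The group of lifts is closed in $\Aut(F,\{e\})$ and contains $G=\Aut_0(M,J)$ as an open subgroup, so $G$ is a Lie group with $\dim G\le\dim F=4$. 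Alternatively, one can argue through isotropy: the isotropy of a point in $\Aut(M,J)$ is contained in $\{\pm\Id\}$ (Corollary 3.7 of \cite{BM17}). Hence $\Aut(M,J)$ acts with discrete isotropy, and the map $G\to M$, $g\mapsto g(p)$, is an immersion of the orbit. I need to check that the compact-open topology agrees with the topology induced from $F$; Kobayashi's argument handles this.

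Second, the equivalence. If $M$ is $G$-homogeneous with $J$ invariant and $G=\Aut_0(M,J)$, then $\dim G\ge\dim M=4$, so $\dim G=4$. Conversely, suppose $\dim G=4$. Since $G$ acts freely on $F$, the orbit $G\cdot f$ of any $f\in F$ is an open, embedded $4$-dimensional submanifold, and it is closed by Kobayashi's theorem. Therefore $G\cdot f$ is a connected component of $F$. Projecting, $G\cdot\pi(f)=\pi(G\cdot f)$ is open in $M$. It is also closed, because its complement is a union of other open orbits. Since $M$ is connected, $G$ acts transitively on $M$, and $J$ is $G$-invariant by definition of $\Aut$.

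The main obstacle is making the passage from $M$ to $F$ rigorous. Specifically, I must show that the lifted group is a closed subgroup of the $\{e\}$-structure automorphism group of $F$, so that the Lie group structure descends to $G$, and that orbits are closed. When $F$ is disconnected, Lemma~\ref{lemma:parallelizable} lets me work directly on a component identified with $M$. When $F$ is connected, the involution is itself parallelism-equivariant, because it multiplies the first two frame vectors by $-1$. This adds a $\Z_2$ that I must quotient out carefully.
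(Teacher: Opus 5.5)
The paper does not prove this theorem; it quotes it from \cite{Kru14} (Theorem C) and \cite{BM17} (Theorem 3.10). Your argument is correct and follows the standard route behind the cited result. It is also the route the paper itself uses later in Proposition \ref{prop:connectedness}: identify $\Aut(M,J)$ with the parallelism-preserving diffeomorphisms of the $J$-adapted double cover that commute with the deck involution $\iota$, apply Kobayashi's theorem for $\{e\}$-structures, and use freeness of the lifted action to get open orbits. When $\dim G=4$, open orbits give transitivity on the connected $M$.

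One correction to your last paragraph: $\iota$ does not preserve the parallelism, since it sends $X_1,X_2$ to $-X_1,-X_2$. So $\iota$ is not in $\Aut(F,\{e\})$, and there is no extra $\Z_2$ to quotient out. The map $\phi\mapsto\tilde\phi$ is injective, with image exactly the closed centralizer of $\iota$ in $\Aut(F,\{e\})$.

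The topology comparison you flag is routine. Suppose $\phi_n\to\phi$ uniformly on compacta. Then $\tilde\phi_n(f)$ can only accumulate at $\tilde\phi(f)$ or $\iota\tilde\phi(f)$. By Kobayashi's convergence lemma, any such limit equals $\psi(f)$ for an $\{e\}$-automorphism $\psi$ that commutes with $\iota$ and covers $\phi$. This forces $\psi=\tilde\phi$, so $\tilde\phi_n\to\tilde\phi$.
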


We invite the reader to compare this result with Theorem 3.2 and Corollary 4.2 in \cite{Kob95}, where it is underlined that almost complex structures with many symmetries are non-generic, while generic structures have few symmetries, if none at all. 

\section{Structures with non-degenerate torsion bundle on Lie algebras}\label{sec:algebras}

In this section we classify $4$-dimensional Lie algebras admitting an almost complex structure with non-degenerate torsion bundle, together with several explicit examples.
\vspace{.2cm}

Let $\g$ be a $4$-dimensional Lie algebra. We denote by $\g^{(k)}$, $k \ge 1$, the derived series of $\g$. For $k = 1$, we denote $\g^{(1)} = \g'$. Tables 2 and 3 in \cite{BR16} give a complete list of $4$-dimensional Lie algebras and their commutators. Since we adopt the same notation, we reproduce the list in Table \ref{tab:list} for convenience of the reader.
\vspace{.2cm}

The following theorem summarizes the results of this section.

\begin{theorem}\label{thm:classification}
    The only $4$-dimensional Lie algebras not admitting an almost complex structure with non-degenerate torsion bundle are
    \begin{align*}
        4\g_1, \quad \g_{2,1} \oplus 2\g_1, \quad \g_{3,1} \oplus \g_1, \quad \g_{3,3} \oplus \g_1,
    \end{align*}
    the Lie algebras $\g_{4,2}^\alpha$ and $\g_{4,8}^\alpha$ for $\alpha=1$, and the Lie algebra $\g_{4,5}^{\alpha,\beta}$ for the choices of parameters $\alpha =1$, $\beta =1$, or $\alpha = \beta$.
\end{theorem}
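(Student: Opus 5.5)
The proof has two halves: non-existence for the listed algebras, and explicit constructions for all the others. Throughout we work with left-invariant $J$ on $\g$, so $N_J$ becomes a bilinear map on $\g$ and every condition is algebraic. The basic tool is the identity $N_J(X,Y)=[JX,JY]-J[JX,Y]-J[X,JY]-[X,Y]$. Since $J\g'$ need not lie in $\g'$, it gives only
\[
\V \subseteq \g' + J\g'.
\]
Non-degeneracy also needs $\V+[\V,\V]$ to be $3$-dimensional. Writing $\V=\langle X,JX\rangle$, this means $[X,JX]\notin\V$.

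\textbf{Obstructions.} We argue by the dimension of $\g'$, read off from Table~\ref{tab:list}.
\begin{itemize}
\item If $\g$ is abelian, then $N_J=0$.
\item If $\dim\g'=1$, say $\g'=\langle Z\rangle$, then $\V\subseteq\langle Z,JZ\rangle$, so $\V=\langle Z,JZ\rangle$ whenever $J$ is non-integrable. We then need $[Z,JZ]\notin\V$. But $[Z,JZ]\in\g'=\langle Z\rangle\subset\V$, a contradiction. This settles $\g_{2,1}\oplus2\g_1$ and $\g_{3,1}\oplus\g_1$.
\item If $\g'$ is abelian of dimension $2$ and $\ad$ of a complement acts by scalars on it, an analogous argument applies. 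This covers $\g_{3,3}\oplus\g_1$, where $\ad_{e_3}=\Id$ on $\langle e_1,e_2\rangle$. The idea is to show that $\V$ is forced into a subspace on which brackets of $\V$ fall back into $\V$.
\item For $\g_{4,2}^1$, $\g_{4,5}^{\alpha,\beta}$ with repeated eigenvalues, and $\g_{4,8}^1$, the algebra is $\R e_4\ltimes\g'$ with $\g'$ of dimension $3$. The exceptional parameters are exactly those where $\ad_{e_4}|_{\g'}$ has a repeated eigenvalue on an ideal of dimension at least $2$. For $\g_{4,8}^1$, which is $\R\ltimes\mathfrak h_3$, the relevant feature is the scalar action on $\mathfrak h_3/\mathfrak z$.
\end{itemize}

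For these remaining exceptional algebras I would use the canonical frame from Section~\ref{sec:cover}. A left-invariant $J$ with non-degenerate $\V$ has, by Theorem~\ref{thm:auto:group}, an automorphism group of dimension at most $4$, and its isotropy at a point is at most $\{\pm\Id\}$. The repeated-eigenvalue condition, however, produces extra automorphisms of $\g$. Concretely, $\g$ contains an ideal $\mathfrak a$ of dimension at least $2$ on which a semisimple derivation acts as a scalar. The plan is to push through the computation: decompose $X\in\V$ along $\mathfrak a$ and its complement, and show that either $N_J$ vanishes or $[X,JX]$ lies in $\langle X,JX\rangle$.

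The cleaner route is direct. Parametrize $J$ relative to the flag $\g''\subset\g'\subset\g$. For these algebras $\g'$ is abelian, except for $\g_{4,8}$. Computing $N_J$ on a basis then shows that $\V$ is a $J$-invariant plane $P$ meeting $\g'$ nontrivially. Because $\ad_{e_4}$ acts as a scalar on the relevant ideal, $[P,P]\subseteq P$. This case-by-case elimination is the step I expect to be the main obstacle. A generic $J$ has many parameters, so I would normalize first: use inner automorphisms of $\g$ to bring $J$ into a form where $J\g''$ or $J(\g'\cap\mathfrak z)$ is fixed, and only then compute.

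\textbf{Existence.} For every other algebra in Table~\ref{tab:list} I would write down an explicit $J$ on the basis $e_1,\dots,e_4$, in the style of Example~\ref{ex:s1s3}. The natural choice is to pair $J e_1=e_2$ with a shear $Je_3=e_4+e_i$, because the shear makes the extra Nijenhuis terms nonzero. For each such $J$ one computes $N_J(e_1,e_3)$ and $N_J(e_1,e_4)$ to get $\V=\langle X,JX\rangle$, then checks that $[X,JX]\notin\V$. For families with parameters, the chosen $J$ must work uniformly. The condition $[X,JX]\notin\V$ should reduce to a polynomial in $\alpha,\beta$ that is nonvanishing exactly off the excluded set $\{\alpha=1\}\cup\{\beta=1\}\cup\{\alpha=\beta\}$. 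This gives a consistency check with the obstruction half.
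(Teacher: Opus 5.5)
Your skeleton matches the paper's. You exclude $\dim\g'\le 1$, show that $\V$ is trapped in a bracket-closed plane for the remaining exceptions, and write explicit structures for everything else. Your $\dim\g'\le1$ argument via $\V\subseteq\g'+J\g'$ is correct, and more self-contained than the paper's citation of \cite{BM17}. Beyond that, the obstruction half is not proved, and the mechanism you predict is wrong.

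For $\g_{3,3}\oplus\g_1$ no analogous argument of that kind works, because $\g'+J\g'$ is generically all of $\g$. What actually happens (Lemma \ref{lemma:g33}) is $\V\subseteq\langle e_4,Je_4\rangle$ with $e_4$ central, a plane that generically does not meet $\g'$.

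For the $\g_{4,\ast}$ exceptions, $\V$ does not sit over the scalar block. It equals $\langle u,Ju\rangle$ with $u=e_2$ for $\g_{4,2}^1$, $u=e_3$ for $\g_{4,5}^{\alpha,1}$, and $u=e_1$ for $\g_{4,5}^{\beta,\beta}$ and $\g_{4,8}^1$. In each case $u$ spans the image of $\mathrm{ad}_{e_4}|_{\g'}-c\,\Id$, where $c$ is the eigenvalue of geometric multiplicity two. The inclusion $[u,Ju]\in\langle u\rangle$ holds because $\langle u\rangle$ is an ideal, not because of any scalar action. Note also that $\V$ meeting $\g'$ is automatic when $\dim\g'=3$, so it distinguishes nothing.

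Your criteria also misfire:
\begin{itemize}
\item $\g_{4,4}$ and every $\g_{4,2}^\alpha$ have a repeated eigenvalue on a $2$-dimensional ideal.
\item Every $\g_{4,5}^{\alpha,\beta}$ has the semisimple derivation $\mathrm{diag}(1,1,0,0)$, which is scalar on the ideal $\langle e_1,e_2\rangle$.
\item Yet the non-excluded members of these families admit non-degenerate structures.
\item Automorphisms of $\g$ that do not preserve $J$ give no automorphisms of $(G,J)$, so Theorem \ref{thm:auto:group} cannot be used this way.
\end{itemize}

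The missing step is a structural computation. If $\g=\R z\ltimes\mathfrak n$ with $\mathfrak n$ abelian, then $[x,y]=\phi(x)Ay-\phi(y)Ax$, where $\phi(z)=1$, $\phi|_{\mathfrak n}=0$, $A|_{\mathfrak n}=\mathrm{ad}_z|_{\mathfrak n}$ and $Az$ is free. Expanding gives
\[
N_J(x,y)=\phi(Jx)By-\phi(Jy)Bx-\phi(x)JBy+\phi(y)JBx,\qquad B=AJ-JA .
\]
Since $BJ=-JB$, $\mathrm{Im}B$ is $J$-invariant, so $\V\subseteq\mathrm{Im}B$. In each exceptional case a suitable choice of $Az$ gives $A=c\,\Id+u\otimes\psi$, hence $\V\subseteq\langle u,Ju\rangle$. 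For $\g_{3,3}\oplus\g_1$ take $z=e_3$, $\mathfrak n=\langle e_1,e_2,e_4\rangle$ and $A=\Id-e_4\otimes e^4$, so $u=e_4$. For $\g_{4,8}^1$ take $A=-\Id-e_1\otimes e^1$; the Heisenberg term $\omega(x,y)e_1$ with $\omega=e^2\wedge e^3$ only adds Nijenhuis terms in $\langle e_1,Je_1\rangle$. This recovers the confinement found in Lemmas \ref{lemma:g33} and \ref{lemma:g42}, uniformly rather than by brute force.

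The existence half is likewise only a plan, and your default choice fails. Take $Je_1=e_2$ and $Je_3=e_4+e_i$ with $i\le 3$ on $\g_{4,2}^\alpha$ ($\alpha\ne1$), $\g_{4,4}$, or $\g_{4,5}^{\alpha,\beta}$ ($\beta\ne1$). The formula above gives $0\ne N_J(e_1,e_3)=-Be_1\in\langle e_1,e_2\rangle$, so $\V=\langle e_1,e_2\rangle$, which is abelian and hence degenerate. You need genuinely different structures, such as those of Examples \ref{ex:g'2} and \ref{ex:g'3}. Each must be checked over its full parameter range, since a polynomial condition like $[X,JX]\notin\V$ is not guaranteed to vanish only on the excluded set.
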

\begin{proof}
    By Proposition 4.1 in \cite{BM17}, if $J$ is an almost complex structure with non-degenerate torsion bundle on $\g$, then $\dim_\R \g' \in \{2, 3\}$. This allows us to conclude that $4\g_1$, $\g_{2,1} \oplus 2\g_1$, and $\g_{3,1} \oplus \g_1$ do not admit such a structure. The remaining cases need \textit{ad hoc} proofs, which are contained in Lemmas \ref{lemma:g33} and \ref{lemma:g42}. To show that every $4$-dimensional Lie algebra not listed in the statement of the theorem admits an almost complex structure with non-degenerate torsion bundle, we provide explicit examples, see Examples \ref{ex:g'2} and \ref{ex:g'3}.
\end{proof}

In the following lemmas, we show that there are Lie algebras on which every almost complex structure has degenerate torsion bundle.

\begin{lemma}\label{lemma:g33}
    Every almost complex structure on $\g_{3,3} \oplus \g_1$ has degenerate torsion bundle.
\end{lemma}
\begin{proof}
    
    Let $J$ be an arbitrary almost complex structure on $\g_{3,3} \oplus \g_1$, and set $J e_j = J^k_j e_k$, where we are summing over repeated indices. Explicit computations show that the Nijenhuis tensor is given by
    \begin{align}
        & N_J(e_1,e_2) = (J_2^3 J_1^4 - J_1^3 J_2^4) e_4, \label{Nj:12}\\
        & N_J(e_1,e_3) = (J_1^4 J_3^3 - J_1^3 J_3^4) e_4 - J_1^4 Je_4, \label{Nj:13} \\
        & N_J(e_1,e_4) = (J_1^4 J_4^3 - J_1^3 J_4^4) e_4 + J_1^3 Je_4, \\
        & N_J(e_2,e_3) = (J_2^4 J_3^3 - J_2^3 J_3^4) e_4 - J_2^4 Je_4, \\
        & N_J(e_2,e_4) = (J_2^4 J_4^3 - J_2^3 J_4^4) e_4 + J_2^3 Je_4, \\
        & N_J(e_3,e_4) = (1 + J_3^4 J_4^3 - J_3^3 J_4^4) e_4 + (J_3^3 + J_4^4) Je_4.\label{Nj:34}
    \end{align}
    The computations are straightforward and, for the sake of the reader, we give more details on how to obtain equation \eqref{Nj:13}. We have that
    \begin{align*}
        N_J(e_1,e_3) &= [Je_1, Je_3] - J([Je_1,e_3] + [e_1, Je_3]) - [e_1, e_3]\\
        &= J_1^l J_3^k [e_l, e_k] -J ( J_1^l [e_l,e_3] + J_3^l[e_1, e_l]) +e_1 \\
        &= e_1 (1 + J_1^3 J_3^1 - J_1^1 J_3^3) + e_2 (J_1^3 J_3^2 - J_1^2 J_3^3)\\
        &+ J_1^1 Je_1 + J_3^3 Je_1 + J_1^2 Je_2 \\
        &= e_1 (1 + J_1^3 J_3^1 + (J_1^1)^2 + J_1^2 J_2^1) \\
        &+ e_2 (J_1^3 J_3^2 + J_1^1 J_1^2 + J_1^2 J_2^2) \\
        &+ e_3 (J_1^3 J_3^3 + J_1^1 J_1^3 + J_1^2 J_2^3) \\
        &+ e_4 (J_1^4 J_3^3 + J_1^1 J_1^4 + J_1^2 J_2^4). 
    \end{align*}
    Imposing the condition $J^2 = - \Id$, we have, for instance, that $1 + (J_1^1)^2 + J_1^2 J_2^1 + J_1^3 J_3^1  = - J_1^4 J_4^1$. Using similar equations, the expression of the Nijenhuis tensor can be simplified to 
    \[
    N_J(e_1,e_3) = (J_1^4 J_3^3 - J_1^3 J_3^4) e_4 - J_1^4 Je_4.
    \]
    The fact that $J$ must be degenerate follows from equations \eqref{Nj:12}--\eqref{Nj:34}. Indeed, if every Nijenhuis tensor vanishes, then $J$ is integrable. If at least one of them is non-vanishing, then $e_4 \in \V$ and $\V =\langle e_4, Je_4 \rangle$. Hence the torsion bundle is necessarily degenerate since either $J$ is integrable, or $\V =\langle e_4, Je_4 \rangle$ and $[\V, \V] =0$.
\end{proof}

\begin{lemma}\label{lemma:g42}\hfill
    \begin{itemize}
        \item [(i)] Every almost complex structure on $\g_{4,2}^\alpha$, for $\alpha = 1$, and on $\g_{4,8}^\alpha$, for $\alpha =1$, has degenerate torsion bundle. 

        \item [(ii)] Every almost complex structure on $\g_{4,5}^{\alpha, \beta}$ has degenerate torsion bundle if the parameters satisfy at least one of the conditions $\alpha =1$, $\beta =1$ or $\alpha = \beta$.
    \end{itemize}
\end{lemma}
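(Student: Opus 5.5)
The plan is to follow the strategy of Lemma \ref{lemma:g33}, but to cut down the brute-force computation using the structure shared by all the algebras in the statement. In the notation of Table \ref{tab:list}, each of $\g_{4,2}^1$, $\g_{4,5}^{\alpha,\beta}$ (with $\alpha=1$, $\beta=1$ or $\alpha=\beta$) and $\g_{4,8}^1$ is a semidirect product $\mathfrak{n}\rtimes\langle e_4\rangle$. Here $\mathfrak{n}=\langle e_1,e_2,e_3\rangle$ is an ideal, abelian in the first two cases and Heisenberg in the third. The defining feature of the excluded parameters is that $\operatorname{ad}_{e_4}|_{\mathfrak{n}}$ has a repeated eigenvalue with a $2$-dimensional eigenspace $W\subset\mathfrak{n}$:
\begin{itemize}
\item[] for $\g_{4,2}^1$, $W=\langle e_1,e_2\rangle$, since the Jordan block on $e_2,e_3$ has the same eigenvalue as $e_1$;
\item[] for $\g_{4,5}^{\alpha,\beta}$, $W$ is spanned by the two basis vectors with equal eigenvalue;
\item[] for $\g_{4,8}^1$, $W=\langle e_2,e_3\rangle$.
\end{itemize}
Consequently $\operatorname{Aut}(\g)$ contains a copy of $GL(W)$ (or at least $SL(W)$ together with scalings, in the Heisenberg case, preserving the bracket $[e_2,e_3]=e_1$ up to rescaling $e_1$). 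It also contains the inner automorphisms $\exp(\operatorname{ad}_v)$, $v\in\mathfrak{n}$, which shift $e_4$ by elements of $\mathfrak{n}$.

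First I normalize $J$ using these automorphisms, since having degenerate torsion bundle is invariant under $\operatorname{Aut}(\g)$. Since $\dim W=2$ in a $4$-dimensional space, there are two cases: either $W$ is $J$-invariant, or $W\oplus JW=\g$. In the second case I use $GL(W)$ to fix $J$ on a basis $w_1,w_2$ of $W$, so that $Jw_1,Jw_2$ complete it to a basis of $\g$. This leaves only a few free parameters, namely the components of $Jw_i$ along the remaining $\operatorname{ad}_{e_4}$-eigenvector (or Jordan partner) and along $e_4$. In the first case I normalize $J|_W$ to a standard rotation and parametrize $J$ on a complement.

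In each normal form I compute the Nijenhuis tensor on pairs of basis vectors exactly as in the proof of Lemma \ref{lemma:g33}, simplifying with $J^2=-\Id$. The expected outcome in each case is one of the following:
\begin{itemize}
\item[] $N_J=0$;
\item[] $\V$ is forced to be a $J$-invariant $2$-plane contained in an abelian subalgebra (typically $\V=W$ or $\V=\langle u,Ju\rangle$ with $u\in\mathfrak{n}$ and $[u,Ju]\in\V$);
\item[] $[X,JX]\in\V$ for a generator $X$ of $\V$.
\end{itemize}
Any of these gives $\V^{(2)}=\V$, i.e.\ a degenerate torsion bundle. The mechanism I expect is that $\operatorname{ad}_{e_4}$ acts as a scalar on $W$, so the $e_4$-components in $N_J(w_1,\cdot)$ and $N_J(w_2,\cdot)$ combine proportionally. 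This collapses the image of $N_J$ into a plane on which the bracket closes. For $\g_{4,8}^1$ the additional bracket $[e_2,e_3]=e_1$ lands in the centre $\langle e_1\rangle$, and I would check that it only produces terms along $e_1$ and $Je_1$, which again lie in $\V$ whenever $N_J\neq0$.

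The main obstacle is the $\g_{4,2}^1$ case and the case $W\oplus JW=\g$, where the normalization leaves the most free parameters and the nilpotent part of $\operatorname{ad}_{e_4}$ contributes extra terms. If the conceptual collapse is not transparent there, I would fall back on the full coordinate computation with a general $J=(J^k_j)$, as in Lemma \ref{lemma:g33}. The goal there is to exhibit a fixed vector ($e_1$ for $\g_{4,2}^1$ and $\g_{4,8}^1$, and the eigenvector outside $W$ or $e_4$-adapted combinations for $\g_{4,5}$) such that every $N_J(e_i,e_j)$ lies in the span of that vector and its image under $J$.
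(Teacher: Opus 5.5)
\textbf{There is a genuine gap.} Your proposal never supplies the reason why $\operatorname{Im}N_J$ collapses onto a bracket-closed plane. It lists expected outcomes, and the mechanism it offers (that $\operatorname{ad}_{e_4}$ is scalar on $W$, so that $e_4$-components ``combine proportionally'') is organized around the wrong object.

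A $2$-dimensional eigenspace cannot by itself force degeneracy. In $\g_{4,8}^0$ the map $x\mapsto[x,e_4]$ is $\operatorname{diag}(1,1,0)$ on $\mathfrak n$, with $2$-dimensional eigenspace $\langle e_1,e_2\rangle$. Yet $\g_{4,8}^0$ carries bracket-generating structures (Example \ref{ex:g'2}).

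The concrete steps you do commit to also fail for $\g_{4,2}^1$:
\begin{itemize}
\item $\operatorname{Aut}(\g_{4,2}^1)$ does not contain $GL(W)$. Let $A$ be the matrix of $x\mapsto[x,e_4]$ on $\mathfrak n$. An automorphism $g$ preserves $\g'=\mathfrak n$ and sends $e_4$ to $ce_4+n_0$ with $n_0\in\mathfrak n$, so $gA=cAg$ on $\mathfrak n$. Since all eigenvalues of $A$ equal $1$, this forces $c=1$. Hence $g$ commutes with $A-\Id$ and must preserve its image $\langle e_2\rangle$. Only the stabilizer of that line in $GL(W)$ is available.
\item Your fallback target is wrong. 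For $Je_1=e_3$, $Je_2=e_4$ one gets $N_J(e_1,e_2)=[e_3,e_4]-J[e_1,e_4]=e_2$. So $\V=\langle e_2,e_4\rangle$, which does not contain $e_1$. The vector to exhibit for $\g_{4,2}^1$ is $e_2$, as in the paper's formulas.
\end{itemize}

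\textbf{The missing idea} is to use the \emph{image} of $A-\lambda\Id$ rather than its kernel $W$.
\begin{itemize}
\item Let $\phi$ be the functional dual to $e_4$ (vanishing on $\mathfrak n$). Then the bracket is $[X,Y]=A\bigl(\phi(Y)X-\phi(X)Y\bigr)$. For $\g_{4,8}^1$ add the term $\omega(X,Y)\,e_1$, where $\omega$ is the $2$-form dual to $e_2\wedge e_3$.
\item Your eigenspace condition says exactly that $A=\lambda\Id+B$ with $\operatorname{rk}B\le 1$.
\item For the bracket $\phi(Y)X-\phi(X)Y$, a four-line check gives $N_J\equiv 0$ for every $J$. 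Since $N_J$ depends linearly on the bracket, only the $B$-part and the Heisenberg term contribute.
\item Both contributions take values in a single line $\ell$: it is $\langle e_2\rangle$, $\langle e_3\rangle$, $\langle e_1\rangle$, $\langle e_1\rangle$ for $\g_{4,2}^1$, $\g_{4,5}^{\alpha,1}$, $\g_{4,5}^{\beta,\beta}$, $\g_{4,8}^1$ respectively.
\item For $\g_{4,8}^1$ the crucial coincidence is $[\mathfrak n,\mathfrak n]=\operatorname{Im}B$. This is precisely what fails for $\g_{4,8}^0$, where $\operatorname{Im}B=\langle e_3\rangle$ but $[\mathfrak n,\mathfrak n]=\langle e_1\rangle$.
\item Hence $\V\subseteq \ell+J\ell$. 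Take $v\in\ell$; it is an $A$-eigenvector commuting with $\mathfrak n$, so $[v,Jv]=\phi(Jv)\,[v,e_4]\in\ell$. Thus $\V$ is closed under brackets whenever $N_J\neq 0$.
\end{itemize}

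No normalization by automorphisms is needed. The paper simply carries out your fallback coordinate computation. Its explicit formulas, all multiples of $\alpha-1$ or $\beta-1$ lying in $\langle e_2,Je_2\rangle$, $\langle e_3,Je_3\rangle$ or $\langle e_1,Je_1\rangle$, are this structure written in coordinates.
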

\begin{proof}
    The idea of the proof is the same as that of Lemma \ref{lemma:g33}. We start with $\g_{4,2}^1$. We show that for every almost complex structure $J$, we have that $e_2 \in \V$. This implies that $\V = \langle e_2, Je_2 \rangle$ and $[\V, \V] \subseteq \langle e_2 \rangle \subseteq \V$, giving degeneracy of the torsion bundle. We explicitly have that
    \begin{align*}
        & N_J(e_1,e_2) = (J_2^4 J_1^3 - J_1^4 J_2^3) e_2,\\
        & N_J(e_1,e_3) = (J_1^3 J_3^4 - J_1^4 J_3^3) e_2 + J_1^4 Je_2, \\
        & N_J(e_1,e_4) = (J_1^3 J_4^4 - J_1^4 J_4^3) e_2 - J_1^3 Je_2, \\
        & N_J(e_2,e_3) = (J_2^3 J_3^4 - J_2^4 J_3^3) e_2 + J_2^4 Je_2, \\
        & N_J(e_2,e_4) = (J_2^3 J_4^4 - J_2^4 J_4^3) e_2 - J_2^3 Je_2, \\
        & N_J(e_3,e_4) = (J_3^3 J_4^4 - 1) e_2 - (J_3^3 + J_4^4) Je_2.
    \end{align*}
    
    For $\g_{4,8}^1$, explicit computations show that for all indices $j < k$, we have that $N_J(e_j, e_k) \in \langle e_1, Je_1 \rangle$. Hence, either the structure $J$ is integrable or $\V = \langle e_1, Je_1 \rangle$. In both cases, the torsion bundle is degenerate.
    
    The proof for the Lie algebra $\g_{4,5}^{\alpha,\beta}$ follows the same ideas, with more complicated computations. By the constraints on the parameters $-1 \le \alpha \le \beta \le 1$, see Table \ref{tab:list}, we can restrict ourselves to the cases $\beta =1$ and $\alpha = \beta$. For the sake of completeness, we give the expression for the Nijenhuis tensors.


    \textbf{Case $\beta =1$:}
    \begin{align*}
        & N_J(e_1,e_2) = (\alpha-1)(J_1^3 J_2^4 - J_1^4 J_2^3) e_3, \\
        & N_J(e_1,e_3) = (\alpha-1)(J_1^3 J_3^4 - J_1^4 J_3^3)e_3 + (\alpha-1)J_1^4 Je_3, \\
        & N_J(e_1,e_4) = (\alpha-1)(J_1^3 J_4^4 - J_1^4 J_4^3) e_3 - (\alpha-1) J_1^3 Je_3, \\
        & N_J(e_2,e_3) = (\alpha-1)(J_2^3 J_3^4 - J_2^4 J_3^3) e_3  + (\alpha-1)J_2^4 Je_3, \\
        & N_J(e_2,e_4) =  (\alpha-1)(J_2^3 J_4^4 - J_2^4 J_4^3) e_3 - (\alpha-1) J_2^3 Je_3, \\
        & N_J(e_3,e_4) =  (\alpha-1)(J_3^3 J_4^4 - J_3^4 J_4^3  - 1) e_3 - (\alpha-1) (J_3^3 + J_4^4) Je_3
    \end{align*}

    \textbf{Case $\alpha = \beta $:}
    \begin{align*}
        & N_J(e_1,e_2) = (\beta-1)(J_1^4 J_2^1 - J_1^1 J_2^4) e_1 + (\beta-1)J_2^4 Je_1, \\
        & N_J(e_1,e_3) = (\beta-1)(J_1^4 J_3^1 - J_1^1 J_3^4) e_1 + (\beta-1)J_3^4 Je_1, \\
        & N_J(e_1,e_4) = (\beta-1)(1 - J_1^1 J_4^4) e_1 + (\beta-1)(J_1^1 + J_4^4) Je_1, \\
        & N_J(e_2,e_3) = (\beta-1)(J_2^4 J_3^1 - J_2^1 J_3^4) e_1, \\
        & N_J(e_2,e_4) = (\beta-1)(J_2^4 J_4^1 - J_2^1 J_4^4) e_1 + (\beta-1) J_2^1 J e_1, \\
        & N_J(e_3,e_4) = (\beta-1)(J_3^4 J_4^1 - J_3^1 J_4^4) e_1 +(\beta-1) J_3^1 Je_1.
    \end{align*}
    In each case, we have that either $J$ is integrable or the torsion bundle is degenerate.
\end{proof}

We provide an alternative abstract proof for some specific cases.

\begin{lemma}
     Let $\g$ be a $4$-dimensional Lie algebra with $\dim_\R \g' = 3$ and $\g^{(2)} = \{ 0 \}$. Suppose that there exists a $2$-plane $\pi \subseteq \g'$ and a vector $Z_0 \notin \g'$ such that $\mathrm{ad}_{Z_0}|_{\pi} = \Id$. Then every almost complex structure on $\g$ has degenerate torsion-bundle.

     In particular, the plane $\pi$ and the vector $Z_0$ exist for $\g_{4,2}^\alpha$, if $\alpha =1$, and for $\g_{4,5}^{\alpha, \beta}$, if $\alpha =1$, $\beta =1$, or $\alpha = \beta$.
\end{lemma}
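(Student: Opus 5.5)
The plan is to show directly that the Nijenhuis image is forced into a fixed, $J$-independent, $\mathrm{ad}$-stable line together with its $J$-image. Set $\mathfrak a \coloneqq \g'$. By hypothesis $\mathfrak a$ is abelian, since $\g^{(2)}=\{0\}$. Moreover $\g=\mathfrak a\oplus\langle Z_0\rangle$, and $D\coloneqq \mathrm{ad}_{Z_0}|_{\mathfrak a}$ is an endomorphism of $\mathfrak a$ with $D|_\pi=\Id$. The crucial object is the image $\Ima(D-\Id)\subseteq\mathfrak a$. Its kernel contains the $2$-plane $\pi$, so its image is either $0$ or a line $\langle n\rangle$. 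Since $D$ commutes with $D-\Id$, we get $D\,\Ima(D-\Id)\subseteq \Ima(D-\Id)$, hence $Dn\in\langle n\rangle$.

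Now fix an arbitrary almost complex structure $J$ on $\g$. Since $\dim\mathfrak a=3$, the intersection $W\coloneqq\mathfrak a\cap J\mathfrak a$ is a $J$-invariant $2$-plane, and $\dim(W\cap\pi)\ge 1$. Pick $0\ne w\in W\cap\pi$ and write $JZ_0=cZ_0+u$ with $u\in\mathfrak a$. Then $\{w,Jw,Z_0,JZ_0\}$ is a basis of $\g$. By the $J$-symmetries $N_J(JX,Y)=-JN_J(X,Y)=N_J(X,JY)$, the image $\V$ is spanned by $N_J(w,Jw)$, $N_J(Z_0,JZ_0)$, and $N_J(w,Z_0)$, $JN_J(w,Z_0)$. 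The first two vanish automatically, since $N_J(X,JX)=0$ for any $X$. Thus $\V=\langle N_J(w,Z_0),\,JN_J(w,Z_0)\rangle$.

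We then expand $N_J(w,Z_0)$ using that $\mathfrak a$ is abelian and $Dw=w$. A short computation gives
\[
N_J(w,Z_0)=w+cJw+(J-c)\,D(Jw)=(J-c)\bigl((D-\Id)(Jw)\bigr),
\]
because $(J-c)Jw=-w-cJw$. Consequently $N_J(w,Z_0)\in(J-c)\langle n\rangle$. There are then two cases. If $N_J=0$, the structure $J$ is integrable. Otherwise $n\neq 0$ and $\V=\langle n,Jn\rangle$. In the second case we write $Jn=sZ_0+a$ with $a\in\mathfrak a$. Then
\[
[n,Jn]=-s\,Dn\in\langle n\rangle\subseteq\V,
\]
so $\V^{(2)}=\V$, and the torsion bundle is degenerate.

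The step I expect to be delicate is the bookkeeping in the expansion of $N_J(w,Z_0)$: one must track the terms $[Jw,u]=0$ and $[w,u]=0$, which vanish because $\mathfrak a$ is abelian, and get the signs right so that the cancellation $w+cJw+(J-c)Jw=0$ appears. The key conceptual point is that $\V$ then lies in $\langle n,Jn\rangle$ with $n$ independent of $J$.

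For the final assertion, we read off from Table~\ref{tab:list} the derivation $\mathrm{ad}_{e_4}$ on $\g'=\langle e_1,e_2,e_3\rangle$. For $\g_{4,2}^1$ we take $\pi=\langle e_1,e_2\rangle$ with $Z_0=e_4$, after rescaling if needed. For $\g_{4,5}^{\alpha,\beta}$ we take the $2$-dimensional eigenspace of eigenvalue $1$: this is $\langle e_1,e_2\rangle$ if $\alpha=1$, $\langle e_1,e_3\rangle$ if $\beta=1$, and $\langle e_2,e_3\rangle$ if $\alpha=\beta$, after rescaling $Z_0=e_4/\alpha$ in the last case, which requires $\alpha\ne0$. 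If $\alpha=\beta=0$, then $\g'$ is not $3$-dimensional, and that algebra is already covered by Lemma~\ref{lemma:g42}.
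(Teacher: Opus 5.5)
Your argument is correct, and it takes a different route from the paper's.

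\textbf{How the paper argues.} It proceeds by contradiction. It assumes the torsion bundle is non-degenerate and takes a generator $Y_0$ of $\V\cap\g'$. It then shows $Y_0\notin\pi$ and splits $[Z_0,Y_0]=\gamma Y_0+w'$ with $0\neq w'\in\pi$. Evaluating $N_J(Y_0,w')$ in the adapted basis $\{Y_0,JY_0,T,JT\}$ forces the components $\epsilon,\delta$ of $Jw'$ along $JY_0,Y_0$ to vanish. From there one still has to observe, as the paper does only implicitly, that $\pi=\langle w',Jw'\rangle$ is then $J$-invariant and $N_J(w',\cdot)\equiv 0$, so $N_J\equiv 0$.

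\textbf{How you argue.} You work directly with an arbitrary $J$. Choosing $w\in\pi\cap(\g'\cap J\g')$ reduces $\V$ to the span of the single vector
\[
N_J(w,Z_0)=(J-c)(D-\Id)(Jw),
\]
which I checked term by term. This gives a stronger, $J$-independent conclusion: for every non-integrable $J$ one has $\V=\langle n,Jn\rangle$, where $\langle n\rangle=\Ima(\mathrm{ad}_{Z_0}|_{\g'}-\Id)$ is a fixed $\mathrm{ad}_{Z_0}$-stable line.

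\textbf{What your route buys.} It explains uniformly the explicit formulas of Lemma~\ref{lemma:g42}: you get $n=e_2$ for $\g_{4,2}^1$, $n=e_3$ for $\g_{4,5}^{\alpha,1}$, and $n=e_1$ for $\g_{4,5}^{\beta,\beta}$. It treats the integrable case in the same stroke. It also never needs the adapted frame to be a basis. The paper's argument, by contrast, is phrased in the frame-based language used throughout that section.

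\textbf{Corrections to the final assertion.} Your check of the last clause, which the paper's proof does not carry out, needs two small fixes.
\begin{itemize}
\item Sign of $Z_0$. The table gives $[e_1,e_4]=e_1$, so $\mathrm{ad}_{e_4}$ acts by $-1$ on the relevant plane. You need $Z_0=-e_4$, and $Z_0=-e_4/\alpha$ in the case $\alpha=\beta$, not $e_4/\alpha$.
\item Planes for $\alpha=1$ and $\beta=1$ are swapped. Since $[e_2,e_4]=\beta e_2$ and $[e_3,e_4]=\alpha e_3$, the correct plane for $\beta=1$ is $\pi=\langle e_1,e_2\rangle$. Your $\langle e_1,e_3\rangle$ fails whenever $\alpha\neq 1$. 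For $\alpha=1$, the constraint $\alpha\le\beta\le 1$ forces $\beta=1$, so $\mathrm{ad}_{-e_4}=\Id$ on all of $\g'$ and any $2$-plane works.
\end{itemize}
Your closing remark about $\alpha=\beta=0$ is unnecessary, since the table already requires $\alpha\beta\neq 0$.
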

\begin{proof}
     Suppose by contradiction that $J$ is such a structure, and let $Y_0$ a generator of $\V \cap \g'$, so that $\V = \langle Y_0, J Y_0 \rangle$. Since $\g^{(2)} = \{ 0 \}$, we cannot have $\V \subset \g'$. Hence $JY_0 = \alpha Z_0 + X_0$, with $X_0 \in \g'$ and $\alpha \neq 0$. Since $\V$ is non-degenerate, then $Y_0 \notin \pi$ because if it was the case, we would have $T \coloneqq [JY_0, Y_0] = \alpha [Z_0, Y_0] = Y_0 \in \V$. Hence we get the direct sum $\g' = \pi \oplus \langle Y_0 \rangle$. Up to a rescaling of $Y_0$, we can assume $\alpha = 1$, hence we can write $T = [Z_0, Y_0] = \gamma Y_0 + w'$, with $w' \in \pi$ and $w' \neq 0$ since the torsion bundle is non-degenerate. If we set $J w' = \epsilon JY_0 + \delta Y_0 + w''$, with $w'' \in \pi$, $w''\neq 0$, we can compute
     \begin{align*}
     N_J (Y_0, T) &= N_J (Y_0, w')\\
     &= [JY_0, \epsilon JY_0 + \delta Y_0 + w''] - J( [JY_0, w'] + [Y_0, \epsilon JY_0 + \delta Y_0 + w'']) \\
     &= \delta [JY_0, Y_0] + w'' - Jw' + \epsilon J [JY_0, Y_0]\\
     &= \delta T + \epsilon J T - \delta Y_0 - \epsilon JY_0.
     \end{align*}
     Since $\{ Y_0, JY_0, T, JT \}$ is a basis of $\g$ and $\{Y_0, JY_0 \}$ is a basis of $\V$, it must be $\epsilon = \delta =0$. This is absurd because it would force $N_J$ to identically vanish. 
\end{proof}

We now give an example of almost complex structures with non-degenerate torsion bundle on each $4$-dimensional Lie algebra that admits one. We do that by means of $J$-bracket-adapted frames, which take into account the simultaneous action of the almost complex structure and of the Lie bracket, or of almost complex structures whose torsion bundle is bracket-generating.

\begin{definition}
    Let $\{X, J X, T, J T \}$ be a $J$-adapted frame for $\g$. We say that the frame is \textit{$J$-bracket-adapted} if $X \in \V \cap \g'$. 
\end{definition}

If $\dim_\R \g' = 3$, the construction of $J$-bracket-adapted frames is straightforward. 

\begin{lemma}\label{lemma:dim3:adapted}
    Let $\g$ be a $4$-dimensional Lie algebra with $\dim_\R \g' = 3$, and let $J$ be an almost complex structure on $\g$ with non-degenerate torsion bundle. Then $\g$ admits a $J$-bracket-adapted frame.
\end{lemma}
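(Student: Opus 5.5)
The plan is a dimension count inside the $4$-dimensional space $\g$. By assumption $\V$ is a $J$-invariant real $2$-plane in $\g$ and $\dim_\R \g' = 3$. Grassmann's formula gives
\[
\dim_\R(\V \cap \g') \ge 2 + 3 - 4 = 1,
\]
so we can pick a nonzero $X_0 \in \V \cap \g'$. Since $\V$ has rank $2$ and $X_0 \ne 0$, the pair $\{X_0, JX_0\}$ is a basis of $\V$.

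Next I would turn $X_0$ into a $J$-adapted frame. Non-degeneracy of the torsion bundle is independent of the chosen generators of $\V$, as recalled in Section \ref{sec:cover}. Hence $T_0 \coloneqq [X_0, JX_0] \notin \V$, and $\{X_0, JX_0, T_0, JT_0\}$ is a basis of $\g$.

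The remaining point is the normalization. The canonical $J$-adapted frame requires $X$ to be the eigenvector of $\tau^A$ in $\V^+$ with eigenvalue $1$. I would argue that the intersection $\V\cap\g'$ can be chosen compatibly with this. The plan is to show that $\V \cap \g'$ is exactly a line; $\V\subset \g'$ is impossible in relevant cases, and one may check via the eigenspace description whether this line is $\V^+$ or $\V^-$. Since $J\V^+ = \V^-$, the line can be matched with an eigenline.

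If the line $\V\cap\g'$ is not an eigenline of $\tau^A$, I would reinterpret the definition of $J$-bracket-adapted frame as only requiring the first vector to be a generator of $\V$ lying in $\g'$. This is presumably the intended meaning: "$J$-adapted" here means the frame has the shape $\{X, JX, [X,JX], J[X,JX]\}$ with $X\in\V$. In that case the argument above already finishes the proof after rescaling $X_0$.

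The main obstacle I expect is exactly this compatibility with the canonical eigenvector normalization. I would first try to settle it by computing $\tau^{X_0}$ on the basis $\{X_0, JX_0\}$. Here I would use that $\g'$ is an ideal containing $T_0$ and every bracket, which should force $\tau^{X_0}(X_0)$ to be proportional to $X_0$. If that works, a rescaling by $|\lambda|^{-1/2}$ normalizes the eigenvalue, possibly after replacing $X_0$ by $JX_0$ when its eigenvalue is negative. The replacement by $JX_0$, however, may leave $\g'$, so I would fall back on the weaker reading of the definition if needed.
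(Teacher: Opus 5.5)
Your core argument is the paper's proof. The dimension count gives a nonzero $X_0\in\V\cap\g'$, non-degeneracy gives $T_0=[X_0,JX_0]\notin\V$, and $\{X_0,JX_0,T_0,JT_0\}$ is declared $J$-bracket-adapted with no eigenvector normalization; the paper merely splits into $\dim\V\cap\g'=1,2$, noting that when $\V\subset\g'$ any $J$-adapted frame works, so it tacitly uses exactly your weak reading.

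Discard the speculative part, though, because it is false. First, $\V\subset\g'$ does occur, e.g.\ $\g_{3,6}\oplus\g_1$ with $Je_1=e_4$, $Je_2=e_3$. Second, the ideal property does not make $X_0$ an eigenvector of $\tau^{X_0}$. For $\g^0_{4,9}$ with $Je_1=e_3$, $Je_2=e_4$ and $X_0=e_2+e_3$, one gets $N_J(X_0,T_0)=-2JX_0$. The eigenlines $\langle X_0\pm JX_0\rangle$ then meet $\g'$ only in $0$, so under the strict normalization the lemma would actually fail; the weak reading is forced, not just presumably intended.
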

\begin{proof}
    Let $\langle X, JX, T, JT \rangle$ be a $J$-adapted frame on $\g$, with $\V = \langle X, JX \rangle$. Since $\dim_\R \g' = 3$ and $\dim_\R \V =2$, we have that $\dim \V \cap \g' \in \{ 1, 2\}$. 
    
    If $\dim \V \cap \g' = 2$, then $\V \subset \g'$, $[\V, \V] \notin \V$ by assumption, and any $J$-adapted frame on $\g$ is also $J$-bracket adapted. By a general fact on $J$-adapted frames, we have uniqueness up to a sign. Choosing a different element of $\V$ as the first element of the frame, say $\hat X = aX + bJX$, we see that $[\hat X, J \hat X] = \hat T = (a^2 +b^2) T = [X, JX]$ and that $\hat T \in \g^{(2)}$ is determined up to a positive constant.
    
    If $\dim \V \cap \g' = 1$, let $\hat X$ be a generator of $\V \cap \g'$. Then $\V = \langle \hat X, J \hat X \rangle$, with $J \hat X \notin \g'$. By setting $\hat T = [\hat X, J \hat X]$, the frame $\{ \hat X, J\hat X, \hat T, J \hat T \}$ is $J$-bracket-adapted. Similarly as the previous case, we have uniqueness up to a positive constant.
\end{proof}

From the first part of the proof of Lemma \ref{lemma:dim3:adapted}, we deduce the following

\begin{cor}
    If $\g$ is a Lie algebra with $\dim_\R \g' =3$ and $\g^{(2)} = \{ 0 \}$, and $J$ is an almost complex structure on $\g$ with non-degenerate torsion bundle, then $\dim \V \cap \g' = 1$.
\end{cor}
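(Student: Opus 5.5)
The plan is to argue by contradiction, taking the first case of the proof of Lemma \ref{lemma:dim3:adapted} as the starting point. Let $J$ be an almost complex structure on $\g$ with non-degenerate torsion bundle $\V$. Since $\dim_\R \g' = 3$ and $\dim_\R \V = 2$, we have $\dim (\V \cap \g') \in \{1,2\}$. So it suffices to exclude the case $\dim (\V\cap \g') = 2$, that is, to rule out $\V \subset \g'$.

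Assume, then, that $\V \subset \g'$. As in the first part of the proof of Lemma \ref{lemma:dim3:adapted}, fix a $J$-adapted frame $\{X, JX, T, JT\}$ with $\V = \langle X, JX\rangle$ and $T = [X, JX]$. Because $X$ and $JX$ both lie in $\g'$, their bracket satisfies
\[
T = [X,JX] \in [\g',\g'] = \g^{(2)} = \{0\}.
\]
Thus $T = 0$, so $[\V,\V] = 0 \subseteq \V$ and $\V^{(2)} = \V$. The series then stabilizes after zero steps, which contradicts the non-degeneracy of $\V$: by definition, non-degeneracy requires $T \notin \V$, and in particular $\dim \V^{(2)} = 3$.

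The only point needing care is that non-degeneracy is a condition on brackets of sections of $\V$, not just on brackets of the vectors $X$ and $JX$. In the invariant (Lie algebra) setting these agree. Indeed, $\V$ is a left-invariant distribution spanned by the left-invariant fields $X$ and $JX$, so every section is a $C^\infty$-combination $fX + gJX$. By the Leibniz rule, the brackets of such sections lie in $\V + \langle [X,JX]\rangle = \V$. Hence $\V^{(2)} = \V$ holds at the level of distributions as well. We conclude that $\dim (\V \cap \g') = 1$, as claimed.

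This also recovers the observation already made in the proof of the preceding lemma: since $\g^{(2)} = \{0\}$, we cannot have $\V \subset \g'$. No real obstacle is expected; the argument is short once the frame from Lemma \ref{lemma:dim3:adapted} is in place.
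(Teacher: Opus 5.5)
Your proof is correct and follows the paper's argument. The dimension count gives $\dim(\V\cap\g')\in\{1,2\}$, and if $\V\subset\g'$ then $T=[X,JX]\in\g^{(2)}=\{0\}$, contradicting non-degeneracy, exactly as in the first case of the proof of Lemma \ref{lemma:dim3:adapted}; your added remark that brackets of arbitrary sections of the invariant distribution $\V$ lie in $\V+\langle [X,JX]\rangle$ is correct and simply makes explicit a step the paper leaves implicit.
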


For a fixed Lie algebra $\g$, one can use the dimension of $\V \cap \g'$ to distinguish different almost complex structures. For a generic $J$, such a dimension will be the minimum possible, that is $0$ if $\dim_\R \g' = 2$ and $1$ if $\dim_\R \g' = 3$. Note that, in the first case, the condition $\dim_\R \V \cap \g'=0$ is necessary for $J$ to be bracket-generating.

\begin{table}[ht]
\centering
\caption{Dimension analysis of $\V \cap \g'$.}
\label{table:intersection}
\renewcommand{\arraystretch}{1.5}
\makebox[\textwidth][c]{
\begin{tabular}{c|c|c}
& $\dim_\R \g' = 2$ & $\dim_\R \g' = 3$  \\ \hline
$\dim_\R \V \cap \g' = 0$ & \makecell{possibly non-degenerate \\ possibly bracket-generating} & $\V = \{ 0 \}$, integrable \\ \hline
$\dim_\R \V \cap \g' = 1$ & \makecell{possibly non-degenerate \\ not bracket-generating} & \makecell{possibly non-degenerate \\ possibly bracket-generating} \\ \hline
$\dim_\R \V \cap \g' = 2$ & only degenerate & \makecell{possibly non-degenerate $( \Rightarrow \g^{(2)} \neq \{ 0 \} )$ \\ not bracket-generating}
\end{tabular}
}
\end{table}

We show that in certain cases the condition $\V \cap \g'=\{0\}$ is also sufficient to have bracket-generating torsion bundle.

\begin{lemma}\label{lemma:existence}
    Let $J$ be an almost complex structure on the Lie algebras $\g_{3,5}^\alpha \oplus \g_1$, for $\alpha \ge 0$, $\g_{4,1}$, or $\g_{4,10}$. If the torsion bundle $\V$ is non-degenerate and satisfies $\V \cap \g'=\{0\}$ then $\V$ is bracket-generating.
\end{lemma}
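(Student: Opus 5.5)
The plan is to use that in all three algebras the derived algebra is a $2$-dimensional abelian ideal, and to reduce bracket-generation to a question about how $\g$ acts on $\g'$. In the notation of Table \ref{tab:list}, for $\g_{3,5}^\alpha\oplus\g_1$, $\g_{4,1}$ and $\g_{4,10}$ one has $\g'=\langle e_1,e_2\rangle$ with $[\g',\g']=0$. Choose a $J$-adapted frame $\{X,JX,T,JT\}$ with $\V=\langle X,JX\rangle$ and $T=[X,JX]\in\g'$. Non-degeneracy gives $T\notin\V$, so $\mathcal E=\langle X,JX,T\rangle$ has rank $3$.

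\textbf{Step 1: locating $\mathcal E$ relative to $\g'$.} Since $\V\cap\g'=\{0\}$ and both have dimension $2$, we get $\g=\V\oplus\g'$. Hence $\dim(\mathcal E\cap\g')\ge 1$. If this intersection had dimension $2$, then $\g'\subset\mathcal E$, so $\mathcal E\supseteq\V\oplus\g'=\g$, which contradicts $\operatorname{rank}\mathcal E=3$. Therefore $\mathcal E\cap\g'=\langle T\rangle$.

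\textbf{Step 2: the criterion.} The term $[\mathcal E,\mathcal E]$ contains $[X,T]$ and $[JX,T]$, and both lie in $\g'$. So $\V$ is bracket-generating as soon as one of them is not in $\langle T\rangle$, i.e.\ as soon as $T$ is not a common eigenvector of $\mathrm{ad}_X|_{\g'}$ and $\mathrm{ad}_{JX}|_{\g'}$.

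Because $\g'$ is an abelian ideal, $\mathrm{ad}_Y|_{\g'}$ depends only on the class of $Y$ in $\g/\g'$. The projections of $X$ and $JX$ span $\g/\g'\cong\langle e_3,e_4\rangle$, since $\V$ is a complement of $\g'$. Consequently the family $\{\mathrm{ad}_X|_{\g'},\mathrm{ad}_{JX}|_{\g'}\}$ spans the same space as $\{\mathrm{ad}_{e_3}|_{\g'},\mathrm{ad}_{e_4}|_{\g'}\}$. So it suffices to show that $T$ is not a common eigenvector of $\mathrm{ad}_{e_3}|_{\g'}$ and $\mathrm{ad}_{e_4}|_{\g'}$.

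\textbf{Step 3: case analysis.}
\begin{itemize}
\item For $\g_{3,5}^\alpha\oplus\g_1$, the operator $\mathrm{ad}_{e_3}|_{\g'}$ has matrix of the form $\begin{pmatrix}\alpha&1\\-1&\alpha\end{pmatrix}$ up to sign. Its eigenvalues are $\alpha\pm i$, so it has no real eigenvector at all.
\item For $\g_{4,10}$, the operator $\mathrm{ad}_{e_4}|_{\g'}$ is a rotation and again has no real eigenvector.
\end{itemize}
In both cases the condition of Step 2 holds automatically, and we conclude $\mathcal E+[\mathcal E,\mathcal E]=\g$.

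\textbf{Main obstacle: $\g_{4,1}$.} Here $\mathrm{ad}_{e_3}|_{\g'}=0$ and $\mathrm{ad}_{e_4}|_{\g'}$ is nilpotent with kernel $\langle e_1\rangle$. So there is a common eigenvector, $e_1$, and the abstract argument of Step 2 alone does not suffice. We must rule out $T\in\langle e_1\rangle$ directly.

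Write $X=a_3e_3+a_4e_4+x'$ and $JX=b_3e_3+b_4e_4+y'$ with $x',y'\in\g'$. The only bracket among $e_3,e_4$ that is nonzero is $[e_3,e_4]=e_2$, and brackets involving $x'$ or $y'$ with $e_4$ land in $\langle e_1,e_2\rangle$. In particular the $e_2$-coefficient of $T=[X,JX]$ equals $a_3b_4-a_4b_3$, up to contributions of $x',y'$ against $e_4$ that take values in $\langle e_1\rangle$ only. This determinant is nonzero precisely because the projections of $X,JX$ span $\g/\g'$. Hence $T\notin\langle e_1\rangle$, so $[e_4,T]$ is a nonzero multiple of $e_1$ not lying in $\langle T\rangle$.

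Since $a_4$ or $b_4$ is nonzero, one of $[X,T]$ or $[JX,T]$ is not in $\langle T\rangle$, and $\V$ is bracket-generating. The delicate point I expect is checking the bracket conventions of Table \ref{tab:list} carefully, so that the $e_2$-component of $T$ really reduces to that determinant.
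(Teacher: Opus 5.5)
Your argument is correct and is essentially the paper's proof. The paper also writes $\V=\langle e_3+v,e_4+w\rangle$, so that the action on the abelian ideal $\g'$ is given by $A=\mathrm{ad}_{e_3}|_{\g'}$ and $B=\mathrm{ad}_{e_4}|_{\g'}$. It then reduces bracket-generation to $T$ not being a common real eigenvector of $A$ and $B$, which is automatic for $\g_{3,5}^\alpha\oplus\g_1$ and $\g_{4,10}$, and it treats $\g_{4,1}$ separately by noting that $T$ has nonzero $e_2$-component. Your explicit checks that $\mathcal E\cap\g'=\langle T\rangle$ and that $\mathrm{ad}|_{\g'}$ factors through $\g/\g'$ only spell out what the paper's choice of basis does implicitly.
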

\begin{proof}
    The assumption $\V \cap \g' = \{0 \}$ gives that $\dim_\R \g' = 2$ and $\g = \V \oplus \g'$. Since $\g' = \langle e_1, e_2 \rangle$, we can find a basis of $\V$ of the form $\{ e_3 + v, e_4 + w\}$ with $v, w \in \g'$. In general, this basis will not satisfy $J(e_3+v) = e_4 +w$. Suppose that $\g \neq \g_{4,1}$. By looking at Table \ref{tab:list}, we see that $\g'$ is spanned by the action of $[e_3, \cdot]$ and $[e_4, \cdot]$. Represent by a matrix $A$ the action of $ad_{e_3}|_{\g'}$. Similarly, represent by $B$ the action of $ad_{e_4}|_{\g'}$. Since $\V$ is non-degenerate, the vector $T = Aw - Bv \in \g'$ is non-zero. By taking iterated brackets, we see that $\V$ is bracket-generating if and only if $AT$ and $BT$ are independent from $T$. This is guaranteed if $A$ and $B$ have no common real eigenvectors, which is always the case for the Lie algebras appearing in the statement of the Lemma. The case of $\g_{4,1}$ yields the same conclusion after observing that the structure equations force $T = e_2 + A e_1$, with $A \in \R$, that corresponds to a bracket-generating structure.
\end{proof}

\begin{remark}
    The results of Lemma \ref{lemma:existence} generalize to Lie algebras with non-degenerate torsion bundles satisfying $\V \cap \g' = \{0\}$. Specifically, $\V$ is bracket-generating precisely when $T$ is not a common eigenvector of $A$ and $B$. Verifying this condition needs a deeper analysis of the local behavior of the torsion bundle within $\g$.
\end{remark}

\begin{example}[Explicit structures on Lie algebras with $\dim_\R \g' = 2$]\label{ex:g'2}
In this example we provide an almost complex structure with a $J$-bracket-adapted frame for each $4$-dimensional Lie algebra with $2$-dimensional derived Lie algebra. In such a case, we have that $\dim_\R \V \cap \g' = 1$. Similarly, we provide an almost complex structure whose torsion bundle is bracket-generating, and $\V \cap \g' = \{ 0 \}$. Note that when $\dim_\R \g' =2$, structures with bracket-generating torsion bundle do not admit $J$-bracket-adapted frames. 
\vspace{.2cm}

$\bullet \, 2 \g_{2,1}$: the structure equations are
\[
[e_1, e_2] = e_1, \quad [e_3, e_4] = e_3.
\]
Consider the structure defined by $Je_1 = e_4$, $Je_2 = e_3$. Then $\V \cap \g' = \langle e_1 + e_3 \rangle$ and we get a $J$-bracket-adapted frame 
\[
\langle e_1 + e_3, e_4 - e_2, e_3 - e_1, - e_2 - e_4 \rangle.
\]
For the structure defined by $Je_1 = e_1 + e_3 + e_4$, $Je_4 = -2 e_1 + e_2 - e_3 - e_4$, $Je_2 = e_3$, we have that $\V \cap \g' = \{ 0 \}$. In this case, the structure is bracket-generating since 
\[
\V = \langle 2 e_1 + 2 e_3 + e_4, e_2 - e_3 - e_4 \rangle.
\]

$\bullet \, \g_{3,2} \oplus \g_1$: the structure equations are
\[
[e_2, e_3] = e_1 - e_2, \quad [e_3, e_1] = e_1.
\]
Consider the structure defined by $Je_1 = e_3$, $Je_2 = e_4$. Then $\V \cap \g' = \langle e_1 - e_2 \rangle$ and we get a $J$-bracket-adapted frame 
\[
\langle e_1 - e_2, e_3 - e_4, -2e_1 + e_2, -2e_3 + e_4 \rangle.
\]
For the structure defined by $Je_2 = e_1 + e_2 + e_4$, $Je_4 = - e_1 -2 e_2 - e_3 - e_4$, $Je_1 = e_3$, we have that $\V \cap \g' = \{ 0 \}$. In this case, the structure is bracket-generating since 
\[
\V = \langle e_1 - e_3 + e_4, 2 e_2 + e_4 \rangle.
\]

$\bullet \, \g_{3,4}^0 \oplus \g_1$: the structure equations are
\[
[e_2, e_3] = e_1, \quad [e_3, e_1] = -e_2.
\]
Consider the structure defined by $Je_1 = e_4$, $Je_2 = e_3$. Then $\V \cap \g' = \langle e_2 \rangle$ and we get a $J$-bracket-adapted frame 
\[
\langle e_2, e_3, e_1, e_4 \rangle.
\]
For the structure defined by $Je_1 = e_2 + e_4$, $Je_4 = - e_1 - e_3$, $Je_2 = e_3$, we have that $\V \cap \g' = \{ 0 \}$. In this case, the structure is bracket-generating since 
\[
\V = \langle e_1 - e_3, 2 e_2 + e_4 \rangle.
\]

$\bullet \, \g_{3,4}^\alpha \oplus \g_1$: the structure equations are
\[
[e_2, e_3] = e_1 - \alpha e_2, \quad [e_3, e_1] = \alpha e_1 - e_2 \quad \text{with $\alpha >0$ and $\alpha \neq 1$.}
\]
Consider the structure defined by $Je_1 = e_4$, $Je_2 = e_3$. Then $\V \cap \g' = \langle \alpha e_1 - e_2 \rangle$ and we get a $J$-bracket-adapted frame 
\[
\langle \alpha e_1 - e_2, \alpha e_4 - e_3, (\alpha^2 + 1) e_1 - 2\alpha e_2, (\alpha^2 + 1)e_4 -2\alpha e_3 \rangle.
\]
For the structure defined by $Je_1 = e_2 + e_4$, $Je_4 = - e_1 - e_3$, $Je_2 = e_3$, we have that $\V \cap \g' = \{ 0 \}$. In this case, the structure is bracket-generating since 
\[
\V = \langle e_1 - e_3 + \alpha e_4, - \alpha e_1 + 2 e_2 - \alpha e_3 + e_4 \rangle.
\]

$\bullet \, \g_{3,5}^0 \oplus \g_1$: the structure equations are
\[
[e_2, e_3] = e_1, \quad [e_3, e_1] = e_2.
\]
Consider the structure defined by $Je_1 = e_4$, $Je_2 = e_3$. Then $\V \cap \g' = \langle e_2 \rangle$ and we get a $J$-bracket-adapted frame 
\[
\langle e_2, e_3, e_1, e_4 \rangle.
\]
For the structure defined by $Je_1 = e_2 + e_3 + e_4$, $Je_4 = - e_1 + e_2 - e_3 $, $Je_2 = e_3$, we have that $\V \cap \g' = \{ 0 \}$. In this case, the structure is bracket-generating since 
\[
\V = \langle e_1 - e_3 - e_4, e_1 + e_2 + 2 e_3 + e_4 \rangle.
\]

$\bullet \, \g_{3,5}^\alpha \oplus \g_1$: the structure equations are
\[
[e_2, e_3] = e_1 - \alpha e_2, \quad [e_3, e_1] = \alpha e_1 + e_2 \quad \text{with $\alpha >0$}.
\]
Consider the structure defined by $Je_1 = e_4$, $Je_2 = e_3$. Then $\V \cap \g' = \langle \alpha e_1 + e_2 \rangle$ and we get a $J$-bracket-adapted frame 
\[
\langle \alpha e_1 + e_2, \alpha e_4 + e_3, (1-\alpha^2)e_1 - 2\alpha e_2, (1-\alpha^2)e_4 - 2\alpha e_3 \rangle.
\]
For the structure defined by $Je_1 = e_2 + e_3 + e_4$, $Je_4 = - e_1 + e_2 - e_3$, $Je_2 = e_3$, we have that $\V \cap \g' = \{ 0 \}$. In this case, the structure is bracket-generating since 
\[
\V = \langle e_1 + e_2 + 2 e_3 + (1+\alpha) e_4, - (1+\alpha)e_1 + \alpha e_2 + (1-\alpha) e_3 + e_4 \rangle.
\]

$\bullet \, \g_{4,1}$: the structure equations are
\[
[e_2, e_4] = e_1, \quad [e_3, e_4] = e_2.
\]
Consider the structure defined by $Je_1 = e_3$, $Je_2 = e_4$. Then $\V \cap \g' = \langle e_2 \rangle$ and we get a $J$-bracket-adapted frame 
\[
\langle e_2, e_4, e_1, e_3 \rangle.
\]
For the structure defined by $Je_1 = e_4$, $Je_2 = e_3$, we have that $\V \cap \g' = \{ 0 \}$. In this case, the structure is bracket-generating since 
\[
\V = \langle e_1 +  e_3, e_4 -e_2\rangle.
\]

\vspace{.05cm}

$\bullet \, \g_{4,3}$: the structure equations are
\[
[e_1, e_4] = e_1, \quad [e_3, e_4] = e_2.
\]
Consider the structure defined by $Je_1 = e_4$, $Je_2 = e_1 + e_2 + e_3$, $Je_3 = -e_1 -2 e_2 - e_3 - e_4$. Then $\V \cap \g' = \{ e_1 + e_2\}$, and we get a $J$-bracket-adapted frame 
\[
\langle e_1 + e_2, e_1 + e_2 + e_3 + e_4, e_1, e_4 \rangle. 
\]
For the structure defined by $Je_1 = -e_1 + e_2 + e_3 + 2e_4$, $Je_2 = - 2 e_1 + e_2 + 3 e_3 + e_4$, $Je_3 = e_4$, we have that $\V \cap \g' = \{ 0 \}$. In this case, the structure is bracket-generating since 
\[
\V = \langle 4 e_1 -  2 e_2 - 5 e_3, 2 e_2 - 2 e_3 + e_4\rangle.
\]

$\bullet \, \g^0_{4,8}$: the structure equations are
\[
[e_1, e_4] = e_1, \quad [e_2, e_4] = e_2, \quad [e_2, e_3] = e_1.
\]
Consider the structure defined by $Je_3 = e_4$, $Je_1 = e_1 + e_2 + e_3 + e_4$, $J e_2 = -2 e_1 -e_2 -2 e_4$. Then $\V \cap \g' = \{ 0 \}$ and we get a bracket-generating torsion bundle
\[
\V = \langle e_1 + e_3 - e_4, e_1 + e_2 + 2 e_3 + 2e_4 \rangle.
\]
For the construction of a $J$-bracket-adapted frame, see the general case $\g_{4,8}^\alpha$ in Example \ref{ex:g'3}.

$\bullet \, \g_{4,10}$: the structure equations are
\[
[e_1, e_3] = e_1, \quad [e_2, e_3] = e_2, \quad [e_1, e_4] = - e_2, \quad [e_2, e_4] = e_1.
\]
Consider the structure defined by $Je_1 = e_3$, $Je_2 = e_4$. Then $\V \cap \g' = \langle e_2 \rangle$ and we get a $J$-bracket-adapted frame 
\[
\langle e_2, e_4, e_1, e_3 \rangle.
\]
For the structure defined by $Je_1 = e_3$, $Je_2 = e_1 + e_4$, $Je_4 = -e_2 - e_3$, we have that $\V \cap \g' = \{ 0 \}$. In this case, the structure is bracket-generating since 
\[
\V = \langle e_1 + 2 e_4, 2 e_2 + e_3\rangle.
\]

\hfill $\blacksquare$ 
\end{example}

\begin{example}[Explicit structures on Lie algebras with $\dim_\R \g' = 3$]\label{ex:g'3}
In this example we provide an almost complex structure with a $J$-bracket-adapted frame for each $4$-dimensional Lie algebra with $3$-dimensional derived Lie algebra.

$\bullet \, \g_{3,6} \oplus \g_1$: the structure equations are
\[
[e_2, e_3] = e_1, \quad [e_3, e_1] = e_2, \quad [e_1, e_2] = -e_3.
\]
Consider the structure defined by $Je_1 = e_4$, $Je_2 = e_3$. Then $\V \cap \g' = \langle e_2, e_3 \rangle$ and we get a $J$-bracket-adapted frame 
\[
\langle e_2, e_3, e_1, e_4 \rangle.
\]
For the structure defined by $Je_1 = e_4$, $Je_2 = e_3 + e_4$, $Je_3 = e_1 - e_2$, we have that $\V \cap \g' = \{ e_1 -2 e_2 \}$. In this case, the structure is bracket-generating since 
\[
\V = \langle e_1 - 2 e_2, 2 e_3 + e_4\rangle.
\]

$\bullet \, \g_{3,7} \oplus \g_1$: the structure equations are
\[
[e_2, e_3] = e_1, \quad [e_3, e_1] = e_2, \quad [e_1, e_2] = e_3.
\]
Consider the structure defined by $Je_1 = e_2$, $Je_3 = e_1 + e_4$, $Je_4 = -e_2 -e_3$. Then $\V \cap \g' = \langle e_3 \rangle$ and we get a $J$-bracket-adapted frame 
\[
\langle e_3, e_1 + e_4, e_2, -e_1 \rangle.
\]
Note that the torsion bundle is bracket-generating.

$\bullet \, \g_{4,2}^\alpha$: the structure equations are
\[
[e_1, e_4] = \alpha e_1, \quad [e_2, e_4] = e_2, \quad [e_3, e_4] = e_2 + e_3 \quad \text{with $\alpha \neq 0$.}
\]
Consider the structure defined by $Je_1 = e_3$, $Je_2 = e_4$. Then $\V \cap \g' = \langle e_2 + (1-\alpha) e_3 \rangle$ and, for $\alpha \neq 1$, we get a $J$-bracket-adapted frame 
\[
\langle e_2 + (1-\alpha) e_3, e_4 - (1-\alpha) e_1, (2 - \alpha) e_2 + (1-\alpha) e_3, (2 - \alpha) e_4 - (1-\alpha) e_1 \rangle.
\]
For the structure defined by $Je_1 = e_3 + e_4$, $Je_3 = e_2 - e_1$, $Je_2 = e_4 $, we have that $\V \cap \g' = \langle (\alpha -1)^2 e_1 + e_2 + (1-\alpha) e_3 \rangle$. In this case, the structure is bracket-generating since 
\[
\V = \langle (\alpha -1)^2 e_1 + e_2 + (1-\alpha) e_3, (\alpha-1) e_1 + e_4\rangle.
\]
For $\alpha =1$, the Lie algebra $\g_{4,2}^1$ does not admit almost complex structures with non-degenerate torsion bundle, see Lemma \ref{lemma:g42}.

$\bullet \, \g_{4,4}$: the structure equations are
\[
[e_1, e_4] = e_1, \quad [e_2, e_4] = e_1 + e_2, \quad [e_3, e_4] = e_2 + e_3.
\]
Consider the structure defined by $Je_1 = e_3$, $Je_2 = e_4$. Then $\V \cap \g' = \langle e_2\rangle$ and we get a $J$-bracket-adapted frame 
\[
\langle e_2, e_4, e_1 + e_2, e_3 + e_4 \rangle.
\]
For the structure defined by $Je_1 = e_3 + e_4$, $Je_3 = e_2 - e_1$, $Je_2 = e_4 $, we have that $\V \cap \g' = \langle e_1 + e_2 + e_3 \rangle$. In this case, the structure is bracket-generating since 
\[
\V = \langle e_1 + e_2 + e_3, -e_1 +e_2 +e_3 + 2 e_4\rangle.
\]

$\bullet \, \g_{4,5}^{\alpha, \beta}$: the structure equations are
\[
[e_1, e_4] = e_1, \quad [e_2, e_4] = \beta e_2, \quad [e_3, e_4] = \alpha e_3,
\]
where the parameters satisfy the conditions $-1 < \alpha \le \beta \le 1$ and $\alpha \beta \neq 0$, or the conditions $\alpha = -1$ and $0 < \beta \le 1$. Consider the structure defined by $Je_1 = e_4$, $Je_2 = e_1 + e_3 + e_4$, $Je_3 = e_1 - e_2 - e_4$. If the conditions $\alpha \neq 1$, $\beta \neq 1$, $\alpha \neq \beta$ are met, one can check that,  $\V \cap \g' = \langle X \rangle$, where
\begin{align*}
X &= N_J ((\alpha-1)e_1 + (\beta-1)e_4, e_2) \\
& = 2 (\alpha-1)(\beta-1)e_1 + (\beta-1)(\beta-\alpha)e_2 + (\alpha-1)(\beta-\alpha)e_3.
\end{align*}
We can directly compute that
\begin{align*}
T = [X, JX] &=  C [2 (\alpha-1)(\beta-1)e_1 + (\beta-1)(\beta-\alpha)e_2 + (\alpha-1)(\beta-\alpha)e_3, e_4]\\
& = C(2 (\alpha-1)(\beta-1)e_1 + \beta (\beta-1)(\beta-\alpha)e_2 + \alpha(\alpha-1)(\beta-\alpha)e_3),
\end{align*}
where $C = (\alpha-1)^2 + (\beta -1)^2 \neq 0 $. For the condition on the parameters, $T$ is not in the image of the Nijenhuis tensor, and the corresponding $J$-bracket adapted frame is
\[
\langle X, JX, T, JT\rangle.
\]
In this case, the torsion bundle is  also bracket-generating since the bracket $[T, JX]$ provides a vector in $\g'$ that is independent from $X$ and $T$.

For $\alpha=1$, $\beta =1$, or $\alpha= \beta$, the Lie algebra $\g_{4,5}^{\alpha,\beta}$ does not admit almost complex structures with non-degenerate torsion bundle, see Lemma \ref{lemma:g42}.

$\bullet \, \g_{4,6}^{\alpha, \beta}$: the structure equations are
\[
[e_1, e_4] = \alpha e_1, \quad [e_2, e_4] = \beta e_2 - e_3, \quad [e_3, e_4] = e_2 + \beta e_3 \quad \text{with $\alpha >0$ and $\beta \in \R$.}
\]
Consider the structure defined by $Je_1 = e_3$, $Je_2 = e_4$. Then $\V \cap \g' = \langle e_2 + (\beta - \alpha ) e_3\rangle$ and we get a $J$-bracket-adapted frame 
\[
\langle e_2 + (\beta-\alpha)e_3, e_4 - (\beta-\alpha)e_1, (2 \beta - \alpha)e_2 + (\beta^2 -\alpha \beta -1) e_3, (2 \beta - \alpha)e_4 - (\beta^2 -\alpha \beta -1) e_1  \rangle,
\]
which is not, however, bracket-generating. If we consider the structure defined by $Je_1 = e_2 + e_3 + e_4$, $Je_3 = -e_1 + e_2 - e_4$, $Je_2 = e_4 $, we have that
\[
\V = \langle X, JX \rangle,
\]
where $X= (1 + (\beta -\alpha)^2)e_1 + (3\beta - 3\alpha +1) e_2 + (\beta -\alpha-3) e_3$, and the torsion bundle is bracket-generating.

$\bullet \, \g_{4,7}$: the structure equations are
\[
[e_1, e_4] = 2 e_1, \quad [e_2, e_4] = e_2, \quad [e_3, e_4] = e_2 + e_3, \quad [e_2, e_3] = e_1.
\]
Consider the structure defined by $Je_1 = e_4$, $Je_2 = e_1 + e_3 + e_4$, $Je_3 = e_1 - e_2 - e_4$. Then $\V \cap \g' = \langle 2e_1 + 2e_2 - e_3\rangle$ and we get a $J$-bracket-adapted frame 
\[
\langle 2e_1 + 2e_2 - e_3, e_1 + e_2 + 2e_3 + 5e_4, 25 e_1 + 5(e_2 - e_3), 5 (e_2 + e_3) + 35e_4 \rangle.
\]
For such a structure, the torsion bundle is bracket-generating. Observe that $\g_{4,7}$ admits also almost complex structures with $\dim_\R \V \cap \g' = 2$, like $Je_1 = e_4$, $Je_2 = e_3$. In this case, a $J$-bracket-adapted frame is given by
\[
\langle e_2, e_3, e_1, e_4 \rangle. 
\]

$\bullet \, \g_{4,8}^{-1}$: the structure equations are
\[
[e_2, e_3] = e_1, \quad [e_2, e_4] = e_2, \quad [e_3, e_4] = - e_3.
\]
Consider the structure defined by $Je_1 = e_4$, $Je_2 = e_1 + e_2 + e_3$, $Je_3= - e_1 - 2 e_2 - e_3 - e_4$. Then $\V \cap \g' = \langle e_1 + 2 e_3\rangle$ and we get a $J$-bracket-adapted frame 
\[
\langle e_1 + 2 e_3, -2 e_1 -4 e_2 -2 e_3 -e_4, 8 e_1 + 2 e_3, -2 e_1 -4 e_2 -2 e_3 + 6 e_4 \rangle.
\]
However in this case $\V$ is not bracket-generating. For the structure defined by $Je_2 = e_1 + e_3 + e_4$, $Je_3 = e_1 - e_2 - e_4$, $Je_1 = e_4 $, we have that $\V \cap \g' = \langle e_1 - e_2 + e_3 \rangle$. In this case, the structure is bracket-generating since 
\[
\V = \langle e_1 - e_2 + e_3, e_2 + e_3 + e_4\rangle.
\]
Observe that $\g_{4,8}^{-1}$ admits also almost complex structures with $\dim_\R \V \cap \g' = 2$, like $Je_1 = e_4$, $Je_2 = e_3$. In this case, a $J$-bracket-adapted frame is given by
\[
\langle e_2, e_3, e_1, e_4 \rangle. 
\]

$\bullet \, \g_{4,8}^{\alpha}$: the structure equations are
\[
[e_1, e_4] = (1+\alpha) e_1, \quad [e_2, e_4] = e_2, \quad [e_3, e_4] = \alpha e_3, \quad [e_2, e_3] = e_1 \quad \text{with $-1 < \alpha \le 1$.}
\]
Consider the structure defined by $Je_1 = e_2 + e_3$, $Je_2 = - e_1 - e_4$, $Je_3 = e_4$. For $\alpha \neq 1$ and $\alpha \neq 0$, we have that $\V \cap \g' = \langle e_3 + (1+\alpha)e_2 \rangle$, and we get a $J$-bracket-adapted frame
\[
\langle (1+\alpha) e_2 + 2 e_3, (1-\alpha) e_4 - (1+\alpha) e_1, (1- \alpha^2) e_2 + 2\alpha (1-\alpha) e_3, (\alpha^2 -1)e_1 - (\alpha-1)^2 e_4 \rangle.
\]
For such a structure, the torsion bundle is bracket-generating. If $\alpha=0$ the frame is only $J$-adapted, since in that case $\dim_\R \g' = 2$. We refer to Example \ref{ex:g'2} for a bracket-generating structure when $\alpha=0$. Observe that $\g_{4,8}^\alpha$ also admits almost complex structures with $\dim_\R \V \cap \g' = 2$, like $Je_1 = e_4$, $Je_2 = e_3$. In this case, a $J$-bracket-adapted frame is given by
\[
\langle e_2, e_3, e_1, e_4 \rangle. 
\]
For $\alpha =1$, the Lie algebra $\g_{4,8}^{1}$ does not admit almost complex structures with non-degenerate torsion bundle, see Lemma \ref{lemma:g42}.

$\bullet \, \g_{4,9}^{0}$: the structure equations are
\[
[e_2, e_3] = e_1, \quad [e_2, e_4] = - e_3, \quad [e_3, e_4] = e_2.
\]
Consider the structure defined by $Je_1 = e_3$, $Je_2 = e_4$. Then $\V \cap \g' = \langle e_2 + e_3 \rangle$ and we get a $J$-bracket-adapted frame 
\[
\langle e_2 + e_3, e_4 - e_1, e_2 - e_3, e_4 + e_1 \rangle,
\]
and the torsion bundle is bracket-generating.

$\bullet \, \g_{4,9}^{\alpha}$: the structure equations are
\[
[e_1, e_4] = 2\alpha e_1, \quad [e_2, e_4] = \alpha e_2 - e_3, \quad [e_3, e_4] = e_2 + \alpha e_3, \quad [e_2, e_3] = e_1 \quad \text{with $\alpha >0$.}
\]
Consider the structure defined by $Je_1 = e_3$, $Je_2 = e_4$. Then $\V \cap \g' = \langle e_2 + (1-\alpha) e_3 \rangle$ and we get a $J$-bracket-adapted frame 
\[
\langle e_2 + (1-\alpha) e_3, e_4 - (1-\alpha) e_1, e_2 - (1-\alpha + \alpha^2) e_3, e_4 + (1-\alpha + \alpha^2) e_1 \rangle,
\]
and the torsion bundle is bracket-generating.
\hfill $\blacksquare$
\end{example}

\newpage

\begin{table}[H]
\centering
\renewcommand{\arraystretch}{1.6}
\caption{Classification of Real Four-Dimensional Lie Algebras}
\label{tab:list}
\small
\makebox[\textwidth][c]{
\begin{tabular}{lllcc}
\toprule
\textbf{Label} & \textbf{Non-zero Brackets} & \textbf{Parameters/Notes} & $\dim \g'$ & $\dim \g^{(2)}$ \\ 
\midrule
$4\g_1$ & --- & Abelian & 0 & 0\\
\midrule
$\g_{2,1} \oplus 2\g_1$ & $[e_1, e_2] = e_1$ & $\mathfrak{aff}(\mathbb{R}) \oplus \mathbb{R}^2$ & 1 & 0\\
\midrule
$2\g_{2,1}$ & \makecell[l]{$[e_1, e_2] = e_1$ \\ $[e_3, e_4] = e_3$} & $\mathfrak{aff}(\mathbb{R}) \oplus \mathfrak{aff}(\mathbb{R})$ & 2 & 0\\
\midrule
$\g_{3,1} \oplus \g_1$ & $[e_2, e_3] = e_1$ & $\text{Heisenberg} \oplus \mathbb{R}$ & 1 & 0\\
\midrule
$\g_{3,2} \oplus \g_1$ & \makecell[l]{$[e_3, e_1] = e_1$ \\ $[e_2, e_3] = e_1 - e_2$} & & 2 & 0\\
\midrule
$\g_{3,3} \oplus \g_1$ & $[e_3, e_1] = e_1, [e_2, e_3] = -e_2$ & & 2 & 0\\
\midrule
$\g_{3,4}^0 \oplus \g_1$ & $[e_3, e_1] = -e_2, [e_2, e_3] = e_1$ & & 2 & 0\\
\midrule
$\g_{3,4}^\alpha \oplus \g_1$ & \makecell[l]{$[e_3, e_1] = \alpha e_1 - e_2$,\\ $[e_2, e_3] = e_1 - \alpha e_2$} & $\alpha > 0, \alpha \neq 1$ & 2 & 0\\
\midrule
$\g_{3,5}^0 \oplus \g_1$ & \makecell[l]{$[e_3, e_1] = e_2$ \\ $[e_2, e_3] = e_1$} & & 2 & 0\\
\midrule
$\g_{3,5}^\alpha \oplus \g_1$ & \makecell[l]{$[e_3, e_1] = \alpha e_1 + e_2$ \\ $[e_2, e_3] = e_1 - \alpha e_2$} & $\alpha > 0$ & 2 & 0\\
\midrule
$\g_{3,6} \oplus \g_1$ & \makecell[l]{$[e_1, e_2] = - e_3, [e_3, e_1] = e_2$ \\ $[e_2, e_3] = e_1$} & $\mathfrak{sl}(2, \mathbb{R}) \oplus \mathbb{R}$ & 3 & 3\\
\midrule
$\g_{3,7} \oplus \g_1$ & \makecell[l]{$[e_1, e_2] = e_3, [e_2, e_3] = e_1$ \\ $[e_3, e_1] = e_2$} & $\mathfrak{so}(3) \oplus \mathbb{R}$ & 3 & 3\\ 
\midrule
$\g_{4,1}$ & $[e_2, e_4] = e_1, [e_3, e_4] = e_2$ & & 2  & 0\\
\midrule
$\g_{4,2}^\alpha$ & \makecell[l]{$[e_1, e_4] = \alpha e_1, [e_2, e_4] = e_2$ \\ $[e_3, e_4] = e_2 + e_3$} & $\alpha \neq 0$ & 3 & 0\\
\midrule
$\g_{4,3}$ & $[e_1, e_4] = e_1, [e_3, e_4] = e_2$ & & 2 & 0\\
\midrule
$\g_{4,4}$ & \makecell[l]{$[e_1, e_4] = e_1, [e_2, e_4] = e_1 + e_2$ \\ $[e_3, e_4] = e_2 + e_3$} & & 3 & 0\\
\midrule
$\g_{4,5}^{\alpha, \beta}$ & \makecell[l]{$[e_1, e_4] = e_1, [e_2, e_4] = \beta e_2$ \\ $[e_3, e_4] = \alpha e_3$} & \makecell[l]{$-1 \leq \alpha \leq \beta \leq 1$,  $\alpha\beta \neq 0$\\ $\alpha = -1$, $0 < \beta \le 1$} & 3 & 0\\
\midrule
$\g_{4,6}^{\alpha, \beta}$ & \makecell[l]{$[e_1, e_4] = \alpha e_1, [e_2, e_4] = \beta e_2 - e_3$ \\ $[e_3, e_4] = e_2 + \beta e_3$} & $\alpha > 0, \beta \in \R$ & 3 & 0\\
\midrule
$\g_{4,7}$ & \makecell[l]{$[e_2, e_3] = e_1, [e_1, e_4] = 2e_1$ \\ $[e_2, e_4] = e_2, [e_3, e_4] = e_2 + e_3$} & & 3 & 1\\
\midrule
$\g_{4,8}^{-1}$ & \makecell[l]{$[e_2, e_3] = e_1, [e_2, e_4] = e_2$, \\ $[e_3, e_4] = - e_3$} & & 3 & 1\\
\midrule
$\g_{4,8}^\alpha$ & \makecell[l]{$[e_2, e_3] = e_1, [e_1, e_4] = (1+\alpha)e_1$ \\ $[e_2, e_4] = e_2, [e_3, e_4] = \alpha e_3$} & $-1 < \alpha \leq 1$ & \makecell[c]{2 (if $\alpha=0$) \\ 3 (if $\alpha \neq 0$)}  & \makecell[c]{0 (if $\alpha=0$) \\ 1 (if $\alpha \neq 0$)}\\
\midrule
$\g_{4,9}^0$ & \makecell[l]{$[e_2, e_3] = e_1, [e_2, e_4] = - e_3$, \\ $[e_3, e_4] = e_2$} & & 3 & 1\\
\midrule
$\g_{4,9}^\alpha$ & \makecell[l]{$[e_2, e_3] = e_1, [e_2, e_4] = \alpha e_2 - e_3$ \\ $[e_1, e_4] = 2\alpha e_1, [e_3, e_4] = e_2 + \alpha e_3$} & $\alpha > 0$ & 3 & 1\\
\midrule
$\g_{4,10}$ & \makecell[l]{$[e_1, e_3] = e_1, [e_2, e_3]=e_2$ \\ $[e_1, e_4] = -e_2, [e_2, e_4] = e_1$} & & 2 & 0\\ 
\bottomrule
\end{tabular}    
}
\end{table}

\section{Structures with non-degenerate torsion bundle on homogeneous manifolds}\label{sec:quotients}

In this section, we study connected homogeneous almost complex $4$-manifolds whose torsion bundle is non-degenerate by means of their $J$-adapted double cover and their automorphism group.
\vspace{.2cm}

We begin with a preliminary result on $J$-adapted double covers of homogeneous almost complex $4$-manifolds and their automorphism groups.

\begin{proposition}\label{prop:connectedness}
    Let $(M,J)$ be a connected homogeneous almost complex $4$-manifold with non-degenerate torsion bundle. Let $F$ be the $J$-adapted double cover of $(M,J)$ and let $G$ be the identity component of $\Aut (M,J)$. Then 
    \begin{itemize}
        \item [(i)] $F$ is connected if and only if $F \cong G$. In such a case, $G$ admits a subgroup $G_p$ of order $2$, which coincides with the isotropy subgroup of $p \in M$, and $M$ is a Lie group if and only if $G_p$ is central in $G$;

        \item [(ii)] $F$ is disconnected if and only if $M \cong G$. In such a case, $F \cong G \times \{0,1\}$.
    \end{itemize}
\end{proposition}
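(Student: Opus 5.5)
The plan is to let $G$ act on the frame bundle and compare it with $F$. By Theorem \ref{thm:auto:group}, $G=\Aut_0(M,J)$ is a $4$-dimensional Lie group acting transitively on $M$. Pseudoholomorphic maps send $J$-adapted frames to $J$-adapted frames, so $G$ acts on $F$ by $g\cdot f^\pm_p = dg_p\circ f^\pm_p$, and this action covers the action on $M$. The action on $F$ is free. If $g\cdot f=f$ with $f$ a frame at $p$, then $g$ fixes $p$, and since $dg_p=\Id$ we get $g=\pm\Id$ by Corollary 3.7 of \cite{BM17}; more precisely, an automorphism with $dg_p=\Id$ is the identity, because automorphisms of an $\{e\}$-structure on a connected manifold are determined by a one-point jet. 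The isotropy $G_p$ has order at most $2$, since it embeds in $\{\pm\Id\}$ via $g\mapsto dg_p$. Fixing $f_0=f^+_p$, the orbit map $\phi\colon G\to F$, $g\mapsto g\cdot f_0$, is an injective immersion between $4$-manifolds. It is therefore an open embedding, and since $G$ is connected its image is an open orbit, namely a union of orbits of the connected group $G$.

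I would next analyse orbits. Every orbit maps onto $M$ by transitivity, so each orbit meets $\pi^{-1}(p)$. Hence $F$ has either one orbit or two (the orbits through $f^+_p$ and $f^-_p$). Each orbit is open, so the orbits are exactly the connected components of $F$, and each is diffeomorphic to $G$ via $\phi$.

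For (i): if $F$ is connected, there is a single orbit and $\phi$ is a diffeomorphism $F\cong G$. Conversely, if $F\cong G$ then $F$ is connected. In this case some $g$ sends $f^+_p$ to $f^-_p$, so $g$ fixes $p$ with $dg_p=-\Id$, and $G_p=\{e,g\}$ has order $2$. Then $M\cong G/G_p$. If $G_p$ is central, $G/G_p$ is a Lie group. The converse, that $M$ being a Lie group forces $G_p$ to be central, is the main obstacle, because it must be read as a statement about $M$ being a Lie group via the $G$-structure. I would argue as follows. If $M$ is a Lie group $H$ with left-invariant $J$, then left translations give a $4$-dimensional connected subgroup of $\Aut_0$, hence $H\to G$ is a covering onto $G$. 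I would then use the transitive action of $H$ on $F$ together with the connectedness of $F$. Lifting to the universal cover, the deck element $g$ of $F\to M$ commutes with the lifted left-translation action, which forces $g$ to commute with $G$, so $G_p$ is central. I would double-check that the deck involution of $F$ corresponds to right multiplication by $g$ under $\phi$: $\phi(hg)=h\cdot f^-_p$ is the deck image of $\phi(h)$, and this is consistent. For $G/G_p$ to be a group whose left translations lie in $G$, the subgroup $G_p$ must be normal, and a normal subgroup of order $2$ in a connected group is central.

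For (ii): if $F$ is disconnected, there are two orbits and each maps bijectively onto $M$, since the orbit meets each fibre exactly once. Then $G_p$ is trivial and $M\cong G$, while $F\cong G\times\{0,1\}$. Conversely, if $M\cong G$ via the orbit map, then $G_p$ is trivial. If $F$ were connected, (i) would give $|G_p|=2$, a contradiction, so $F$ is disconnected; this is consistent with Lemma \ref{lemma:parallelizable}.
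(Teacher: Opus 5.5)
Your main line is the paper's: lift the action of $G$ to $F$, get freeness from the fact that an automorphism fixing a $J$-adapted frame is the identity, and conclude that the orbits are open. Hence the orbits are exactly the components of $F$, each a copy of $G$. Your orbit count (one orbit or two, each meeting every fibre) gives (i) and (ii) at once, and it is correct.

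For the converse in (ii) you use $|G_p|$ instead of the paper's global left-translated adapted frame. You also rightly read $M\cong G$ as ``$G$ acts simply transitively''. As a bare diffeomorphism statement it would fail: for the diagonal $H\subset SO(3)\times S^1$ in the proof of Theorem \ref{thm:diffeotype}, with the structure of Example \ref{ex:lens}, the quotient is diffeomorphic to $G$ while $F=G$ is connected.

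What does not work is your argument for ``$M$ is a Lie group iff $G_p$ is central''. The step ``the deck element commutes with the lifted left-translation action, which forces $g$ to commute with $G$'' is a non sequitur. Under $\phi$ the deck involution is right multiplication by $g$, and right multiplications commute with all left multiplications whatever $g$ is.

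Moreover, $H\to G$ is not merely a covering. Left translations act faithfully, so it is an isomorphism onto $G$, which then acts simply transitively, forcing $G_p=\{e\}$. Hence in case (i) $M$ carries no Lie group structure with $J$ left-invariant at all. Nor is $G_p$ ever central there: a central isotropy element fixes every point $k\cdot p$, contradicting the effectiveness of $G\subset\operatorname{Diff}(M)$. This is why Proposition \ref{prop:inner} and the appendix only deal with non-central subgroups.

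So the clause can only mean what the paper means, which is what your last sentence says: $G/G_p$ inherits a group structure from $G$ iff $G_p$ is normal, iff it is central. Keep that sentence and drop the rest of that argument.

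A smaller slip: the non-trivial isotropy element does not have $dg_p=-\mathrm{Id}$. Since $[-X,-JX]=[X,JX]$, its differential is the involution $\psi$ of Proposition \ref{prop:inner}: it is $-1$ on $\mathcal{V}_p$ and $+1$ on $\langle T_p,JT_p\rangle$. So $G_p$ embeds in the permutations of $\{f^+_p,f^-_p\}$, not in $\{\pm\mathrm{Id}\}$. This is harmless, since you only use $g\neq e$ and $g\cdot p=p$.
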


\begin{proof}
Since $M$ is connected and homogeneous, $M$ is diffeomorphic to a quotient $M\cong G/G_p$ with $G=\operatorname{Aut}_0(M,J)$ and $G_p$ the isotropy at $p$. By Theorem \ref{thm:auto:group}, $\dim G \le 4$. By construction $M \cong F/\mathbb Z_2$, the $\mathbb Z_2$ being the deck group of $\pi$ exchanging $f_p^{+}$ and $f_p^{-}$.

\textbf{(i)} If $F\cong G$, then $F$ is connected and $\pi\colon G\to M$ is a connected $2:1$ covering, hence regular; its deck group is the group of right translations by $G_p$, so $G_p$ is a subgroup of $G$ of order $2$. Conversely, suppose $F$ is connected. As $G=\operatorname{Aut}_0(M,J)=\operatorname{Aut}_0(F,J)$, by Lemma 3.9 in \cite{BM17}, the action of $G$ on $M\cong F/\mathbb Z_2$ lifts to $F$. Fix $f\in F$. The orbit $G\cdot f$ projects surjectively onto $M$ because $G$ is transitive on $M$. Since $\pi$ is a local diffeomorphism, $\dim(G\cdot f) = 4 =\dim F$, so the orbit is open. The orbits thus partition $F$ into open sets, and connectedness forces $G\cdot f=F$. Hence $F\cong G/H$ with $H$ the stabilizer of $f$, and since an automorphism fixing a $J$-adapted frame is the identity, $H=\{e\}$ and $F\cong G$. Finally $M=G/G_p$ is a Lie group iff $G_p$ is normal, i.e.\ central, being of order $2$.

\textbf{(ii)} If $F$ is disconnected, by Lemma \ref{lemma:parallelizable} it is the disjoint union of two sheets, each mapped pseudo-biholomorphically onto $M$. The lifted action  of $G$ preserves the sheets and is transitive on each with trivial stabilizer, so each sheet is diffeomorphic to $G$. Hence $M\cong G$ and $F\cong G\times\{0,1\}$. The converse, $M\cong G\Rightarrow F$ disconnected, follows from the fact that $G$ is parallelizable and the construction of a $J$-adapted frame is global on $M$.
\end{proof}

The distinction between case (i) and (ii) depends on whether or not a certain involution of $J$-adapted frames is an inner automorphism.

\begin{proposition}\label{prop:inner}
Let $(M,J)$ be a connected homogeneous almost complex $4$-manifold with non-degenerate torsion bundle, let $(F,J)$ be its $J$-adapted double cover, and let $\psi$ be the involution of $J$-adapted frames $\psi(\{X,JX,T,JT\})=\{-X,-JX,T,JT\}$. 

By Proposition \ref{prop:connectedness}, $F$ is a Lie group with
identity component $G=\operatorname{Aut}_0(M,J)$. Let $h\in F$ be the element of order $2$ generating the deck group, so that $\operatorname{Ad}_h=\psi$. Then $h$ is
non-central, and exactly one of the following holds.
\begin{enumerate}
\item[(i)] $h\in G$. Then $F=G$ is connected, $\pi\colon G\to M$ is a $2:1$ covering, and $M=G/\langle h\rangle$ is the quotient of a connected Lie group by a non-central subgroup of order $2$. In particular $M$ is not a Lie group.

\item[(ii)] $h\notin G$. Then $F=G\sqcup hG$ is disconnected, $\pi|_G\colon G\to M$ is a diffeomorphism, and $M\cong G$.
\end{enumerate}
Moreover, $\operatorname{Ad}_h\notin\operatorname{Inn}(\mathfrak g)$ implies \emph{(ii)}.
\end{proposition}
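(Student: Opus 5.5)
We will realize $F$ as a Lie group and then read off the two cases from where the deck element lies. The first step is to make $F$ a Lie group. The group $\Aut(F,J)$ preserves the lifted $\{e\}$-structure of $J$-adapted frames, so it acts freely on $F$. By Lemma 3.9 in \cite{BM17}, every automorphism of $(M,J)$ lifts to $F$, and the deck involution $\sigma$ exchanging $f^+_p$ and $f^-_p$ commutes with all these lifts. Let $\widetilde G$ be the group generated by the lifts of $G$ together with $\sigma$. By the orbit argument in the proof of Proposition \ref{prop:connectedness}, the $G$-orbits are open. Since $\sigma$ exchanges the two points of each fibre, $\widetilde G$ acts transitively on $F$, and it acts freely. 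Fixing a base frame $f_0$ and sending $g\mapsto g\cdot f_0$ therefore identifies $F$ with $\widetilde G$. Under this identification the identity component is $G=\Aut_0(M,J)$, which is the lifted group by Theorem \ref{thm:auto:group} and Proposition \ref{prop:connectedness}. The deck element becomes $h=\sigma$, which has order $2$.

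Next we compute $\Ad_h$. We transport the lifted parallelism $\{X,JX,T,JT\}$ on $F$ to left-invariant fields, so that it gives a basis of $\mathfrak g$. Because $h$ maps $f^+_p$ to $f^-_p$, conjugation by $h$ acts on this basis exactly as $\psi$ does. Hence $\Ad_h=\psi$. This map is not the identity, so $h$ is non-central; indeed $\Ad_h\neq\Id$ already rules out $h$ commuting with $G$.

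Then we split into cases according to whether $h\in G$. If $h\in G$, then $F=G$ is connected, the projection $\pi$ is the quotient by $\langle h\rangle$, and $M=G/\langle h\rangle$. By Proposition \ref{prop:connectedness}(i), $M$ is a Lie group if and only if $\langle h\rangle$ is central, and we have just shown it is not. If $h\notin G$, then $F=G\sqcup hG$ has two components and $\pi|_G$ is a bijective local diffeomorphism, so $M\cong G$ as in Proposition \ref{prop:connectedness}(ii). The two cases are mutually exclusive by definition.

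Finally we prove the last claim. If $h\in G$, then $\Ad_h$ equals $\Ad$ of an element of the connected group $G$, so $\Ad_h$ lies in $\operatorname{Inn}(\mathfrak g)$. Taking the contrapositive, $\Ad_h\notin\operatorname{Inn}(\mathfrak g)$ forces case (ii).

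The main obstacle is the first step: carefully identifying $F$ with a group containing the deck involution. The key point is checking that $\sigma$ commutes with the lifted action and that the whole action remains free.
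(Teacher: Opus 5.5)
Your case split on whether $h\in G$, the non-centrality argument from $\operatorname{Ad}_h=\psi\neq\Id$, and the proof of the last claim via $\operatorname{Ad}(G)=\operatorname{Inn}(\mathfrak g)$ coincide with the paper's proof. The paper does not construct the group structure on $F$ or derive $\operatorname{Ad}_h=\psi$; it takes both from the setup of the statement. Your first two steps try to derive them, and they fail in three places.

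(1) $\Aut(F,J)$ does not preserve the lifted parallelism. The deck involution $\sigma$ covers $\Id_M$, so it is pseudoholomorphic on $F$, but it sends $\{X,JX,T,JT\}$ to $\{-X,-JX,T,JT\}$.

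(2) $\widetilde G$ is not free when $F$ is connected. The non-trivial $h_0\in G_p$ (of order $2$ by Proposition \ref{prop:connectedness}(i)) lifts to a map sending $f^+_p$ to $f^-_p$. Hence $\sigma\tilde h_0\neq\Id$ fixes $f^+_p$, and the orbit map $\widetilde G\to F$ is $2:1$, not a bijection. Relatedly, $\sigma$ covers $\Id_M$, so it is never the lift of an element of $G$. In your model $h=\sigma\notin G$ always, so case (i) could never occur, yet it does (Example \ref{ex:lens}).

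(3) You correctly note that $\sigma$ commutes with every lift. Then $\sigma$ is central in $\widetilde G$ and $\operatorname{Ad}_\sigma=\Id$, which contradicts your next claim $\operatorname{Ad}_h=\psi\neq\Id$. You are conflating how $\sigma$ moves the parallelism, which is indeed $\psi$, with conjugation by $\sigma$.

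A correct derivation of $\operatorname{Ad}_h=\psi$ exists only in case (i). Identify $F\cong G$ through the lifted action of $G$ alone, which is free and transitive by Proposition \ref{prop:connectedness}. Since $\sigma$ commutes with left translations, it equals the right translation $R_{h_0}$. The lifted parallelism is left-invariant, and $dR_{h_0}$ acts on left-invariant fields by $\operatorname{Ad}_{h_0^{-1}}=\operatorname{Ad}_{h_0}$. The flip therefore gives $\operatorname{Ad}_{h_0}=\psi$, with $h=h_0\in G$.

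In case (ii) no derivation is possible in general. Conjugation by $h$ restricts to an automorphism of the identity component $G$, so $\operatorname{Ad}_h$ must preserve the bracket of $\mathfrak g$, and $\psi$ need not. Take the structure on $\g_{3,7}\oplus\g_1$ in Example \ref{ex:g'3}, which is in case (ii) because its left-invariant frame is global. There $\psi$ fixes $e_1,e_2$, which span $\langle T,JT\rangle$, but negates $e_3=[e_1,e_2]\in\V$. The natural group structure $G\times\Z_2$ instead makes $h$ central.

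So the part of your argument that goes beyond the paper cannot be completed as stated. The paper's proof sidesteps it by using the group structure on $F$ and $\operatorname{Ad}_h=\psi$ as input from the setup. With that input, what remains of your argument is exactly the paper's proof.
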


\begin{proof}
By Proposition \ref{prop:connectedness}, the cover $F$ is a Lie group with identity component $G$, and $h^2=e$ with $\operatorname{Ad}_h=\psi$. As $\psi \neq \operatorname{id}$, $h\notin Z(F)$. Being a double cover of the connected $M$, $F$ has at most two components. Hence either $h\in G$, in which case $F=G$, or $h\notin G$, in which case $F=G\sqcup hG$.

(i) If $h\in G$, the order-two subgroup $\langle h\rangle$ acts freely on $G$ by right translation and $\pi\colon G\to G/\langle h\rangle=M$ is a $2:1$ covering. Since $h$ is non-central, $\langle h\rangle$ is not normal, so $M$ carries no group structure compatible with $\pi$.

(ii) If $h\notin G$, then for $g_1,g_2\in G$ one has $\pi(g_1)=\pi(g_2)$, that is equivalent to $g_2\in\{g_1,g_1h\}$. As $g_1h\in hG$ lies outside of $G$, this forces $g_2=g_1$, so $\pi|_G$ is injective. It is surjective because every coset $\{ g, gh \}$ meets $G$. Hence $\pi|_G$ is a diffeomorphism and $M\cong G$.

To prove the last assertion, note that if $h\in G$ then $\operatorname{Ad}_h\in\operatorname{Ad}(G)=\operatorname{Inn}(\mathfrak g)$.
\end{proof}

To prove our main result, we will resort multiple times to the following Lemma.

\begin{lemma}\label{lemma:absorption}
Let $Q$ be a smooth manifold, let $\phi \in \operatorname{Diff}_0(Q)$ be isotopic to the identity, and let $k\ge 1$ such that $\phi^{k} = \operatorname{Id}_{Q}$. Let
$\zeta \in \mathbb Z_{k}$ act on $Q \times S^{1}$ by
\[
   \zeta\cdot(p,\theta)=\bigl(\phi(p),\,\theta + \tfrac\pi k\bigr).
\]
Then the action is free and $(Q \times S^{1})/\mathbb Z_{k}$ is diffeomorphic to $Q \times S^{1}$.
\end{lemma}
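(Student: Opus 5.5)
The plan is to write down an explicit diffeomorphism of $Q\times S^1$ that intertwines the given $\mathbb Z_k$-action with a pure rotation in the circle factor. Rotations of $S^1$ have an obvious quotient, so this will settle both claims at once. Throughout, $S^1=\mathbb R/2\pi\mathbb Z$.

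\emph{Freeness.} Suppose $\zeta^j$ fixes a point $(p,\theta)$ for some $0<j<k$. The second coordinate would then satisfy $\theta+j\pi/k\equiv\theta \pmod{2\pi}$. This forces $j\pi/k\in 2\pi\mathbb Z$, which is impossible for $0<j<k$. Hence the action is free and the quotient $(Q\times S^1)/\mathbb Z_k$ is a smooth manifold.

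Before going on, I note a consistency issue. For $\zeta^k=\mathrm{id}$ we need $\phi^k=\mathrm{Id}$ together with $k\cdot\pi/k=\pi\equiv 0$, and the latter fails. So the literal action is a $\mathbb Z_{2k}$-type action rather than a $\mathbb Z_k$-action. I would read the shift as $2\pi/k$, i.e.\ the circle as $\mathbb R/\pi\mathbb Z$, and run the argument in that normalization. The absorption argument is insensitive to this, since all that matters is that the shift generates the cyclic group of rotations of order $k$.

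\emph{Untwisting.} The natural idea is to choose a loop $\phi_t$ in $\mathrm{Diff}(Q)$ with $\phi_0=\mathrm{Id}$ and $\phi_{2\pi}=\mathrm{Id}$, passing through $\phi$ at time $t=2\pi/k$, and equivariant in the sense that
\[
\phi_{t+2\pi/k}=\phi\circ\phi_t .
\]
With such a loop, define
\[
\Phi(p,\theta)=(\phi_\theta^{-1}(p),\theta).
\]
Then $\Phi$ conjugates the action $\zeta$ to $(q,\theta)\mapsto(q,\theta+2\pi/k)$, which is pure rotation in the circle factor. The quotient of $S^1$ by this rotation is again a circle, so
\[
(Q\times S^1)/\mathbb Z_k\cong Q\times (S^1/\mathbb Z_k)\cong Q\times S^1 .
\]

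\emph{Building the equivariant loop.} This is the main obstacle. Isotopy to the identity gives a path $\gamma$ from $\mathrm{Id}$ to $\phi$, but an arbitrary path will not satisfy the equivariance condition. The first thing I would try is to avoid the loop altogether and view the quotient as a mapping torus. The map $(p,\theta)\mapsto(p,\theta)$, restricted to the fundamental domain $Q\times[0,2\pi/k]$, identifies the quotient with the mapping torus $T_\phi$ of $\phi$ (or of $\phi^{-1}$, depending on the gluing direction). A mapping torus of a map isotopic to the identity is diffeomorphic to $T_{\mathrm{Id}}=Q\times S^1$. Concretely, one can use
\[
(p,s)\mapsto(\gamma_s(p),s),
\]
after reparametrizing $\gamma$ so that it is constant near its endpoints, which keeps everything smooth across the gluing. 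The diffeomorphism class of a mapping torus depends only on the isotopy class of the monodromy, so this route makes the equivariance condition unnecessary. In this formulation the hypothesis $\phi^k=\mathrm{Id}$ is needed only to ensure that the group action is well defined, which is where the normalization of the shift discussed above enters.
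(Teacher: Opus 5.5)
Your proof is correct and, once you set aside the equivariant-loop idea, it is essentially the paper's argument: freeness read off from the circle coordinate, identification of the quotient with the mapping torus $T_\phi$ via a fundamental domain, and invariance of mapping tori under isotopy of the monodromy. Your remark on the normalization is also right. With $S^1=\mathbb{R}/2\pi\mathbb{Z}$ the shift has to be $2\pi/k$ (one $k$-th of a period) for $\zeta^k=\mathrm{id}$ to hold. This is how the lemma is actually used in the proof of Theorem~\ref{thm:diffeotype}, with half-period shifts for $k=2$ and a quarter-period shift for $k=4$.
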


\begin{proof}
The translation $\theta\mapsto\theta+\tfrac \pi k$ has no periodic point of period $j$ for $0<j<k$, so $\zeta^{j}$ acts without fixed points for such $j$, the action is free, and the quotient is a smooth manifold. Projection onto the second factor realizes $(Q \times S^{1})/\mathbb Z_{k}$ as a fiber bundle over $S^{1}/\mathbb Z_{k}$ with fiber $Q$, which is the mapping torus $T_{\phi}$ of $\phi$. Mapping tori of isotopic diffeomorphisms are diffeomorphic, and $T_{\operatorname{Id}_{Q}} = Q \times S^{1}$. Since $\phi \in \operatorname{Diff}_0 (Q)$, we conclude that $T_{\phi}$ is diffeomorphic to $Q \times S^{1}$.
\end{proof}

We are ready to state our main result.

\begin{theorem}\label{thm:diffeotype}
    Let $(M,J)$ be a connected homogeneous almost complex $4$-manifold with non-degenerate torsion bundle. Then $M$ is diffeomorphic to a $4$-dimensional Lie group admitting an almost complex structure with non-degenerate torsion bundle, or to the products $L(4,1) \times \R$, $L(4,1) \times \T$, where $L(4,1)$ is a lens space.
\end{theorem}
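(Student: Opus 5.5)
By Proposition \ref{prop:inner}, the proof splits into two cases. In case (ii), $M\cong G=\Aut_0(M,J)$. Here $G$ is a $4$-dimensional Lie group, and the invariant structure $J$ pulled back along $\pi|_G$ is an almost complex structure on $G$ with non-degenerate torsion bundle. This gives the first alternative of the statement directly. All the work is therefore in case (i), where $M=G/\langle h\rangle$ with $G$ a connected $4$-dimensional Lie group and $h\in G$ a non-central element of order $2$.

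In case (i) I will use the appendix classification of non-central order-$2$ subgroups of connected $4$-dimensional Lie groups. The plan is to go through the Lie algebras admitted by Theorem \ref{thm:classification}. For each one, we look at every connected group $G$ with that algebra and every non-central involution $h$. By Proposition \ref{prop:inner}, $\Ad_h=\psi$ must act as $-1$ on the plane $\V$ and as $+1$ on $\langle T,JT\rangle$. So $\Ad_h$ is an inner involution of $\g$ with a $2$-dimensional $(-1)$-eigenspace that is $J$-invariant.

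Solvable simply connected groups are exponential-type or have center-free structure, so they have no torsion at all. Torsion elements arise only in non-simply-connected quotients, where they typically lie in compact tori and are central, or in groups with a semisimple factor. I therefore expect the appendix to show that non-central involutions only occur when $\g\cong\mathfrak{so}(3)\oplus\R$ or $\mathfrak{sl}(2,\R)\oplus\R$, plus possibly a few solvable groups of the form $\widetilde{E}(2)$-quotients such as $\g_{3,5}^0\oplus\g_1$. For each surviving case, I will either rule out compatibility with $\psi$ or identify the quotient.

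The main case is $G=SU(2)\times\R$ or $SU(2)\times\T$ (or $SO(3)\times\cdot$), with $h=(h_0,t)$. Here $h_0\in SU(2)$ generates the order-$2$ part and is non-central: $h_0$ acts by the rotation $\Ad_{h_0}$ by $\pi$ on $\mathfrak{so}(3)$. This rotation has a $(-1)$-eigenplane, which matches $\psi$, as in Example \ref{ex:s1s3}. In $SU(2)$, elements of order $2$ are central ($-1$), so a non-central $h$ of order $2$ must be of the form $(h_0,t)$, where $h_0$ has order $4$ and $h_0^2=-1$ is compensated by $t$. In $SO(3)\times\cdot$, by contrast, $h_0$ can have order $2$ itself.

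This is where I will sort the quotients. If the $\T$- or $\R$-component of $h$ is trivial, then $M=(S^3/\langle h_0\rangle)\times\R$ or $\times\T$ when the order works out. Here $\langle h_0\rangle\cong\Z_4$ acts on $S^3$ by left multiplication, which gives $L(4,1)$. Quotients of $SO(3)=L(2,1)$ by an involution are not free-action quotients, since left multiplication is free but not of the right type, so these must be handled separately.

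If the circle component is nontrivial, $h$ acts diagonally. In that situation Lemma \ref{lemma:absorption} applies with $\phi$ equal to left multiplication by $h_0$, which is isotopic to the identity since $SU(2)$ is connected. It shows that the quotient is again a product, either with $\T$, or with $\R$ via the analogous argument. The quotient must also be checked against $L(4,1)$ by tracking which finite subgroup survives.

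The $\mathfrak{sl}(2,\R)\oplus\R$ case has to show that every quotient is diffeomorphic to a Lie group (for instance $\widetilde{SL}_2$ quotients). This holds because $\widetilde{SL}(2,\R)$ is contractible-cross-circle type: involutions lie in a circle subgroup, and the quotient by them is again a group up to diffeomorphism, via the same absorption trick.

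I expect the main obstacle to be this case-by-case bookkeeping of the torsion in non-simply-connected groups, in particular correctly identifying the finite group that actually acts on the $S^3$ factor and so landing exactly on $L(4,1)$ rather than $S^3$ or $\R P^3$. My first approach to it will be to write $h=(q,z)$ explicitly and compute $\langle h\rangle\cap(SU(2)\times\{1\})$.
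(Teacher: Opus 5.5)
Your skeleton is the paper's. You split with Proposition \ref{prop:inner}, read the pairs $(G,\langle h\rangle)$ off the appendix, apply Lemma \ref{lemma:absorption} whenever $h$ moves a circle factor by a half-period, and get $SO(3)/\langle r\rangle\cong S^3/\Z_4=L(4,1)$ by lifting $r$ to an order-$4$ quaternion. One genuine addition is the constraint $\operatorname{Ad}_h=\psi$, which the paper's proof never uses to prune cases. It does prune. For $SE_n(2)\times\R$, $SE_n(2)\times\T$ and the quotients of $G_{4,10}$, the $(-1)$-eigenspace of $\operatorname{Ad}_h$ is the abelian ideal $\g'=\langle e_1,e_2\rangle$. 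This would force $\V=\g'$ with $[\V,\V]=0$, so these pairs cannot occur at all, whereas the paper has to show $G/H\cong G$ for them. The filter does not remove $G^0_{4,9}$ (eigenspace $\langle e_2,e_3\rangle$, with $[e_2,e_3]=e_1$) or the $\widetilde{SL}(2,\R)$ quotients (eigenspace $\mathfrak p$), so absorption is still needed there. Note also that your heuristic list of solvable groups with non-central involutions misses $G^0_{4,9}$ and $G_{4,10}$, so you really need the appendix for this step.

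The compact case, which you call the crux, is where the proposal goes wrong, even though your end results match the paper's.
\begin{itemize}
\item In the direct products $SU(2)\times\R$ and $SU(2)\times\T$ one has $(h_0,t)^2=(h_0^2,t^2)$. An involution therefore has $h_0=\pm\Id$ and is central (Proposition \ref{prop:SU2}(i)), so no compensation is possible.
\item The situation you describe, $h_0^2=-\Id$ absorbed by the circle component, occurs only in $U(2)=(SU(2)\times S^1)/\langle\delta\rangle$ with $\delta=(-\Id,\tfrac12)$. This group is missing from your list. The lift $\hat h=(c,\tfrac14)$ of $\operatorname{diag}(1,-1)$, with $c=\operatorname{diag}(-i,i)$, satisfies $\hat h^2=\delta$. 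So a $\Z_4$ acts on $SU(2)\times S^1$, and Lemma \ref{lemma:absorption} must be applied with $k=4$, giving $S^3\times S^1$.
\item Your remark that quotients of $SO(3)$ by an involution are not free quotients is false. Right translation by $r$ is free, and it yields exactly the $L(4,1)$ you had just found.
\item In this family no involution has a non-zero component in an $\R$ factor, so ``with $\R$ via the analogous argument'' is vacuous. The only non-compact case is $SO(3)\times\R$ with $h=(r,0)$.
\end{itemize}
Your closing test does work, and settles all compact cases at once, if you apply it to the full preimage $\Gamma'$ of $\langle h\rangle$ in $SU(2)\times\R$ rather than to $\langle h\rangle$. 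Set $K=\Gamma'\cap(SU(2)\times\{0\})$.
\begin{itemize}
\item $K=\Z_4$ for $SO(3)\times\R$ and for $SO(3)\times\T$ with $h=(r,1)$. These give $L(4,1)\times\R$ and $L(4,1)\times\T$.
\item $K=\{\pm\Id\}$ for the diagonal subgroup of $SO(3)\times\T$, where absorption gives $SO(3)\times\T$.
\item $K$ is trivial for $U(2)$, where absorption gives $S^3\times S^1$.
\end{itemize}
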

\begin{proof}

Let $(M,J)$ be a connected homogeneous almost complex $4$-manifold with non‑degenerate torsion bundle, and let $\pi\colon F\to M$ be its $J$-adapted double cover. By Proposition \ref{prop:connectedness}, we have two possibilities.

\textbf{Case 1: $M$ is a Lie group.}
Then $M$ carries a left‑invariant almost complex structure with non‑degenerate torsion bundle. Its universal cover $\tilde M$ inherits a left‑invariant structure of the same type, so the Lie algebra of $\tilde M$ must be one of those classified in Theorem \ref{thm:classification}. The explicit examples in Section \ref{sec:algebras}, Examples \ref{ex:g'2} and \ref{ex:g'3}, show that all admissible Lie algebras do occur. Moreover, the explicit left-invariant structures provided descend from $\tilde{M}$ to $M$, establishing the first alternative of the theorem.

\textbf{Case 2: $M$ is not a Lie group.}
By Proposition \ref{prop:connectedness}, we have that $M\cong G/H$ where $G=\operatorname{Aut}_0(M,J)$ is a $4$-dimensional connected Lie group and $H=\{e,h\}$ is a non‑central subgroup of order $2$ acting on $G$ by right multiplication. The complete list of pairs $(G,H)$, up to conjugacy, is given in the Appendix (Propositions \ref{prop:no:subgroups}--\ref{prop:sl2}). We now determine the diffeomorphism type of $G/H$ for each such pair, dividing them in two groups.

\textbf{Quotient $M = G /H$ diffeomorphic to $G$.} In all the pairs below, $G$ is diffeomorphic to a product $ Q \times S^{1}$. The action of $h$ shifts the $S^{1}$ factor by a half-period and acts on $Q$ by a map isotopic to the identity. Hence, Lemma \ref{lemma:absorption} applies and yields $G/H \cong_{C^{\infty}} G$. Each such $G$ is a $4$-dimensional Lie group admitting an almost complex structure with non-degenerate torsion bundle, so these pairs fall under the first alternative of the theorem.

\begin{itemize}
\item For $G=SE_n(2)\times\mathbb R$ with $n$ odd (Proposition \ref{prop:SE2}), the generator $h=g_n$ shifts the rotation circle of the $SE_n(2)$ factor by a half-period and rotates the $\mathbb R^{2}$ part by $\pi$, acting trivially on the $\mathbb R$ factor. Taking $ Q=\mathbb R^{2}\times\mathbb R$ and $\phi$ the $\pi$-rotation of $\mathbb R^2$ (times the identity), Lemma \ref{lemma:absorption} with $k=2$ gives $G/H\cong_{C^{\infty}}G$. For $G=SE_n(2)\times\mathbb T$ the same holds for both subgroups: the generator $g_n^{\pi}$ additionally shifts the $\mathbb T$ factor by a half-period, which is again isotopic to the identity, so $\phi$ stays isotopic to $\operatorname{Id}_Q$.

\item For the quotients of $G^0_{4,9}$ (Proposition \ref{prop:G9}) and of $G_{4,10}$ (Proposition \ref{prop:G10}) with $n$ odd, the involution $h$ shifts the unique circle direction by a half-period (the $e_4$, resp.\ $z$, coordinate) and acts on the transverse $\mathbb R^{2}$ by a rotation, composed with a translation in the
remaining directions. Both factors of this transverse action are isotopic to the identity, so $\phi \simeq \operatorname{Id}$ and Lemma \ref{lemma:absorption} with $k=2$ gives $G/H\cong_{C^{\infty}}G$.

\item For the quotients of $\widetilde{SL}(2,\mathbb R)\times\mathbb R$ (Proposition \ref{prop:sl2}) with $n$ odd, the generator $h$ shifts the circle produced by the central lattice by a half-period and acts on the transverse plane $\mathfrak p=\langle e_1,e_2\rangle$ by the rotation of angle $n\pi$, composed with a translation in the $e_4$-direction. As above $\phi \simeq \operatorname{Id}$, and $G/H\cong_{C^{\infty}}G$. The same conclusion holds for the quotients with a $\T$ factor, where $h$ may shift the torus factor by a half-period.

\item For $G=SO(3)\times\mathbb T$ and the diagonal subgroup
$H=\{ \Id,(r,e^{i\pi})\}$ of Proposition \ref{prop:SU2}, with $r\in SO(3)$ the non-central element of order $2$, the generator shifts the $\mathbb T$ factor by a half-period and acts on $SO(3)$ by right translation $R_r$, which is isotopic to the identity.
Lemma \ref{lemma:absorption} with $Q = SO(3)$ and $k=2$ gives
$(SO(3)\times\mathbb T)/H\cong_{C^{\infty}}SO(3)\times\mathbb T$.

\item For $G=U(2)$ (Proposition \ref{prop:SU2}), write $U(2)=(SU(2)\times S^{1})/\mathbb Z_2$ with $\mathbb Z_2=\langle\delta\rangle$, and $\delta(u,\theta)=(-u,\theta+\tfrac12)$. Then Lemma \ref{lemma:absorption} with $Q=SU(2)$, $\phi=R_{-\mathrm {Id}}$ the antipodal map, and $k=2$ already gives
$U(2)\cong_{C^{\infty}}SU(2)\times S^{1}=S^{3}\times S^{1}$. The non-central
$h=\operatorname{diag}(1,-1)$ has $\det h=-1$, so it lifts to
$\hat h(u,\theta)=(uc,\theta+\tfrac14)$ with $c=\operatorname{diag}(-i,i)$ of order
$4$ and $\hat h^{2}=\delta$. Thus $\langle\delta,\hat h\rangle=\langle\hat h\rangle
\cong\mathbb Z_4$ and
\[
   U(2)/H=(SU(2)\times S^{1})/\mathbb Z_4 .
\]
Since right multiplication by $c$ is isotopic to the identity in $SU(2)$, Lemma \ref{lemma:absorption} applied with $k=4$ gives $U(2)/H\cong_{C^{\infty}}SU(2)\times S^{1}=S^{3}\times S^{1}$. Hence $U(2)/H\cong_{C^{\infty}}U(2)$, a $4$-dimensional Lie group.
\end{itemize}

\textbf{Quotients diffeomorphic to lens spaces.} The remaining pairs are those in which $h$ acts on a compact $SO(3)$ factor with
no circle factor, and they produce the genuinely non-group diffeomorphism types of the theorem. Let $r\in SO(3)$ be the
non-central element of order $2$, realized as the rotation by $\pi$ about a fixed axis. Lifting $r$ to $SU(2)=S^3$ yields a quaternion of order $4$, so $\langle r\rangle$ lifts to a cyclic group $\mathbb Z_4$ acting freely on $S^{3}$
by right translation, whence
\[
   SO(3)/\langle r\rangle\;\cong_{C^{\infty}}\;S^{3}/\mathbb Z_4\;=\;L(4,1).
\]
For $G=SO(3)\times\mathbb R$, the unique non-central subgroup of order $2$ is $H=\{e,(r,0)\}$, giving $M\cong L(4,1)\times\mathbb R$. For $G=SO(3)\times\mathbb T$ and the subgroup $H=\{e,(r,1)\}$ acting trivially on $\mathbb T$, we obtain $M\cong L(4,1)\times\mathbb T$. An explicit almost complex structure with non-degenerate torsion bundle on $L(4,1)\times\mathbb R$ and $L(4,1)\times\mathbb T$ is constructed in Example \ref{ex:lens}. This is the second alternative of the theorem.
\end{proof}

\begin{example}[\textbf{Almost complex structure with non-degenerate torsion bundle on }$L(4,1)\times\mathbb{R}$ \textbf{and} $L(4,1)\times\mathbb{T}$]\label{ex:lens}

Consider the Lie algebra $\mathfrak{so}(3)\oplus\mathbb{R}
=\langle e_1,e_2,e_3,e_4\rangle$ with $[e_1,e_2]=e_3$, $[e_2,e_3]=e_1$, $[e_3,e_1]=e_2$, and the left-invariant almost complex structure $J$ defined by
\[
  Je_1=e_4,\qquad Je_2=e_2+e_3,\qquad Je_3=-2 e_2-e_3.
\]
Since $J$ is left-invariant and the projection
$SU(2)\to SO(3) \times \R$ is a quotient by a central subgroup, $J$ descends to a left-invariant structure of the same type on $SO(3)\times\mathbb{R}$, and on $SO(3)\times\mathbb{T}$.

Let $h\in SO(3)$ be the non-central order-$2$ element of Proposition \ref{prop:SU2}, realized as the rotation by $\pi$ about the $e_1$-axis, so that
\[
  \operatorname{Ad}_h e_1=e_1,\quad \operatorname{Ad}_h e_2=-e_2,\quad
  \operatorname{Ad}_h e_3=-e_3,\quad \operatorname{Ad}_h e_4=e_4 .
\]
A $J$-adapted frame is given by $\{X,JX,T,JT\}$ with $X=3 e_2 + e_3$, $JX=e_2+2 e_3$, $T=[X,JX]= 5 e_1$, and $JT= 5 e_4$. On this frame $\operatorname{Ad}_h$ acts as $\psi(\{X,JX,T,JT\})=\{-X,-JX,T,JT\}$ of Proposition~\ref{prop:inner}. Thus $\operatorname{Ad}_h$ commutes with $J$, so right multiplication by $h$ is pseudoholomorphic and $J$ descends to the quotient $\big(SO(3)\times\mathbb{R}\big)/H\cong L(4,1)\times\mathbb{R}$, where $H=\{e,h\}$. The same construction on $SO(3)\times\mathbb{T}$ yields a structure of the same type on $L(4,1)\times\mathbb{T}$.
\hfill  $\blacksquare$
\end{example}

\appendix

\section{Non-central subgroups of order \texorpdfstring{$2$}{} of \texorpdfstring{$4$}{}-dimensional Lie groups}\label{sec:appendix}

In this section we determine non-central subgroups of order $2$ of $4$-dimensional connected Lie groups. As a starting point, we base ourselves on the classification of $4$-dimensional Lie groups made by Biggs and Remsing, adopting their notation \cite{BR16}. In some cases, we will recur to the following 

\begin{lemma}\label{lemma:quotient}
    Let $G$ be a Lie group and let $H$ be a discrete central subgroup of $G$ that is torsion‑free. Suppose that $\{ \id, g \}$ is a non-central subgroup of $G$ of order $2$.
    Then $\{ [\id], [g] \}$ is a non-central subgroup of $G/H$ of order $2$.
\end{lemma}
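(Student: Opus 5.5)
We have to check three things about the image $[g]$ of $g$ in $G/H$: it is nontrivial, it has order $2$, and it is not central. The quotient map $q\colon G\to G/H$ is a homomorphism with kernel $H$, so everything reduces to statements about $g$ and $H$ inside $G$.

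First, $[g]^2=[g^2]=[\id]$, so $\{[\id],[g]\}$ is closed under multiplication and is a subgroup of order at most $2$. To see $[g]\neq[\id]$, suppose $g\in H$. Then $g$ is a non-identity element of finite order lying in $H$. This contradicts the assumption that $H$ is torsion-free. (Alternatively, $g\in H$ would make $g$ central, since $H$ is central, contradicting that $\{\id,g\}$ is non-central.) So the subgroup has order exactly $2$.

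The key step is non-centrality. Suppose, for contradiction, that $[g]$ is central in $G/H$. Then for every $x\in G$ the commutator $c(x)=xgx^{-1}g^{-1}$ lies in $H$. I will use two facts to force $c(x)=\id$.

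The first fact is a torsion argument. Since $H$ is central, $xgx^{-1}=c(x)g$ with $c(x)$ commuting with $g$. Squaring gives $\id=(xgx^{-1})^2=c(x)^2g^2=c(x)^2$. Because $H$ is torsion-free, $c(x)=\id$ for all $x$. Hence $g$ is central in $G$, contradicting the hypothesis.

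The second fact is a connectedness argument, which holds when $G$ is connected, as in all the applications. The map $x\mapsto c(x)$ is continuous from $G$ into the discrete set $H$, so it is constant, equal to $c(\id)=\id$. The torsion argument alone already suffices, so connectedness is not actually needed.

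I expect no real obstacle. The only subtle point is the squaring identity, which relies on $c(x)$ being central so that it commutes past $g$. Centrality of $H$ is exactly what provides this.
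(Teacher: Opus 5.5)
Your proof is correct and follows the same route as the paper: first $[g]\neq[\id]$ because $g\notin H$, then centrality of the commutator $c(x)\in H$ together with $g^2=\id$ gives $c(x)^2=\id$, and torsion-freeness forces $c(x)=\id$. The only cosmetic difference is that the paper conjugates $x$ by $g$ twice while you square $xgx^{-1}=c(x)g$, and your connectedness remark is a valid but, as you say, unnecessary alternative.
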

\begin{proof}
    Since $H$ is central and $g\notin Z(G)$, we have that $g\notin H$, hence $[g]\neq[\id]$, and $\{[\id],[g]\}$ is a non-trivial subgroup of $G/H$ of order $2$.
    
    Assume, by contradiction, that $[g]$ is central in $G/H$.
    Then for every $x\in G$ we have $[g][x]=[x][g]$, i.e.\ $gxg^{-1}x^{-1}\in H$.
    Denote $c_x = gxg^{-1}x^{-1}\in H$. Since $g$ has order $2$, we can apply conjugation by $g$ twice to $x$, to get \[
    g ( g x g^{-1}) g^{-1} = x.
    \]
    On the other hand, we also have that 
    \[
    g(gxg^{-1})g^{-1 }= g(c_x x)g^{-1} = c_x gxg^{-1} =c_x^2 x,
    \]
    where we used that $c_x$ is central, that gives us the equality $c_x^2 x = x$. Since $c_x\in H$ and $H$ is torsion‑free, $c_x=\id$ for every $x\in G$. Thus $gxg^{-1}=x$ for all $x$, i.e.\ $g\in Z(G)$, contradicting the hypothesis.
\end{proof}

For the sake of exposition, we divide the Lie groups in several classes.

\subsection{Groups without non-central subgroups of order \texorpdfstring{$2$}{}.}\label{sec:nosubgroups}

\begin{proposition}\label{prop:no:subgroups}
    The following $4$-dimensional Lie groups have no non-central subgroups of order $2$:
    \begin{itemize}
        \item [(i)] Abelian groups $\R^k \times \T^l$, $k+l=4$;
        \item [(ii)] trivial Abelian extensions of the Lie groups $G_{2,1}$, $G_{3,2}$, $G_{3,3}$, $G^0_{3,4}$, $G^\alpha_{3,4}$, $G^\alpha_{3,5}$ and their group quotients;
        \item [(iii)] the Lie group $G_{2,1} \times G_{2,1}$;
        \item [(iv)] Lie groups whose universal cover is the trivial Abelian extension of the Heisenberg group;
        \item [(v)] the Lie groups $G^\alpha_{4,2}$, $G_{4,4}$, $G^{\alpha,\beta}_{4,5}$, $G^{\alpha,\beta}_{4,6}$, $G_{4,7}$, $G^\alpha_{4,8}$, $G^\alpha_{4,9}$;
        \item [(vi)] Lie groups whose universal cover is either $G_{4,1}$, $G_{4,3}$, $G^{-1}_{4,8}$.
    \end{itemize}
\end{proposition}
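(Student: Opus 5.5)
The plan is to reduce the whole proposition to one structural principle about simply connected solvable Lie groups. Every group in (i)--(vi) is a quotient $G=\tilde G/\Gamma$, where $\tilde G$ is a simply connected solvable $4$-dimensional Lie group from the list of \cite{BR16} and $\Gamma\subseteq Z(\tilde G)$ is a discrete central subgroup. Two standard facts will be used. First, a simply connected solvable Lie group is diffeomorphic to $\mathbb R^4$ and contains no non-trivial elements of finite order. Second, the quotient of a simply connected group by a connected closed normal subgroup is again simply connected, by the long exact homotopy sequence.

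The key step is the following claim.

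\emph{Claim.} If $Z(\tilde G)$ is connected, then every element of order $2$ in every quotient $G=\tilde G/\Gamma$ is central.

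Let $[x]\in G$ have order $2$. Then $x^2\in\Gamma\subseteq Z(\tilde G)$. Consider the image $\bar x$ of $x$ in $\tilde G/Z(\tilde G)$. Since $Z(\tilde G)$ is connected, $\tilde G/Z(\tilde G)$ is a simply connected solvable group, hence torsion-free. From $\bar x^{\,2}=e$ we get $\bar x=e$, so $x\in Z(\tilde G)$. Therefore $[x]$ is central in $G$. When $Z(\tilde G)$ is trivial the argument is even simpler: $\Gamma$ is trivial, $G=\tilde G$ is torsion-free, and there are no elements of order $2$ at all.

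With the claim in hand, the proof becomes a case check: for each $\tilde G$ in the list, compute $Z(\tilde G)$ and verify that it is connected (possibly trivial). I would carry this out as follows.

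(i) $\tilde G=\mathbb R^4$ is abelian, so there is nothing to prove.

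(ii) For $G_{2,1}$, $G_{3,2}$, $G_{3,3}$, $G^0_{3,4}$, $G^\alpha_{3,4}$ and $G^\alpha_{3,5}$ with $\alpha>0$, the derivation $\operatorname{ad}$ acting on the derived algebra has no eigenvalue in $2\pi i\mathbb Z$. This also rules out the possibility that some non-identity element of the simply connected group acts trivially by conjugation. Hence the center of the simply connected group is trivial. Taking the product with $\mathbb R^k$, the center of $\tilde G$ is exactly the $\mathbb R^k$ factor, which is connected.

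(iii) $G_{2,1}\times G_{2,1}$ has trivial center.

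(iv) For Heisenberg $\times\,\mathbb R$, the group is nilpotent and its center is $\exp\langle e_1,e_4\rangle\cong\mathbb R^2$, which is connected.

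(v) For $G^\alpha_{4,2}$, $G_{4,4}$, $G^{\alpha,\beta}_{4,5}$, $G^{\alpha,\beta}_{4,6}$, $G_{4,7}$, $G^\alpha_{4,8}$ and $G^\alpha_{4,9}$, the eigenvalues of $\operatorname{ad}_{e_4}$ on $\mathfrak g'$ show that $\mathfrak z(\mathfrak g)=0$. One then checks in exponential coordinates $\mathfrak g'\rtimes\mathbb R e_4$ that the group center is trivial. Consequently these groups are the only connected groups with these Lie algebras, and they have no torsion.

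(vi) For $G_{4,1}$, $G_{4,3}$ and $G^{-1}_{4,8}$, the center is $\exp$ of the line spanned by $e_1$, respectively $e_2$, respectively $e_1$, which is connected.

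The main obstacle is the center computations. The delicate point is to rule out discrete, non-identity central elements in the $e_4$-direction, that is, elements $(v,t)$ with $t\ne0$ for which $e^{t\operatorname{ad}_{e_4}}$ acts trivially on $\mathfrak g'$ and $v$ is compatible with this. Such elements can exist only when $\operatorname{ad}_{e_4}|_{\mathfrak g'}$ has purely imaginary eigenvalues and real eigenvalues equal to zero. That situation arises for $\mathfrak{se}(2)$-type algebras such as $\mathfrak g_{3,5}^0$, $\mathfrak g_{4,9}^0$ and $\mathfrak g_{4,10}$, which are deliberately excluded from the statement. For $\mathfrak g^{\alpha,\beta}_{4,6}$ and $\mathfrak g^\alpha_{4,9}$ with $\alpha>0$, the non-zero real parts prevent it. I would verify this case by case by writing the group law in coordinates $(v,t)\in\mathfrak g'\times\mathbb R$.
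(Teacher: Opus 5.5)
Your proof is correct, and it follows a different, more structural route than the paper's. The paper argues group by group with the explicit linear models of Biggs--Remsing. For (i)--(iii) and (v) it appeals to a ``standard computation'' in these matrix groups. For (iv) and (vi) it goes through the diagram of central quotients: it checks by hand, worked out only for $H_3\times\R$, that in the maximal quotients $g^2\in\Gamma$ forces $g$ to be central. It then pushes this to the intermediate quotients with Lemma~\ref{lemma:quotient}, which requires the intermediate kernels to be torsion-free.

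Your claim replaces all of this with one observation. If $Z(\tilde G)$ is connected, then $\tilde G/Z(\tilde G)$ is simply connected and solvable, hence torsion-free. So $g^2\in Z(\tilde G)$ already gives $g\in Z(\tilde G)$, for every central quotient simultaneously. This has several advantages:
\begin{itemize}
\item it removes the need for cover diagrams, for Lemma~\ref{lemma:quotient} and for linearizability;
\item it reduces the case analysis to checking that $Z(\tilde G)$ is connected, which makes precise what the paper leaves as ``standard'' or ``completely analogous'';
\item it explains why $G^0_{3,5}$, $G^0_{4,9}$ and $G_{4,10}$ are exactly the solvable exceptions: their simply connected groups have disconnected centers.
\end{itemize}
The paper's route instead stays inside the explicit matrix models, which are the framework it then uses to write down the non-central involutions in the remaining cases.

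Two small slips in your execution, neither affecting the conclusion. First, in (ii) the criterion ``no eigenvalue in $2\pi i\Z$'' is the wrong test. On $\g^0_{3,5}$ the map $\mathrm{ad}_{e_3}$ has eigenvalues $\pm i\notin 2\pi i\Z$, yet $\widetilde{\mathrm{SE}}(2)$ has center $\exp(2\pi\Z e_3)$. What you actually need is the criterion you state in your last paragraph: $e^{tD}$ can be unipotent for some $t\neq 0$ only if all eigenvalues of $D$ are purely imaginary. For every algebra in (ii), some eigenvalue of $\mathrm{ad}_{e_3}|_{\g'}$ has non-zero real part, so (ii) goes through.

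Second, in (v), for $\g_{4,7}$, $\g^\alpha_{4,8}$ and $\g^\alpha_{4,9}$ the normal factor is the Heisenberg nilradical $\langle e_1,e_2,e_3\rangle$. For $\g^0_{4,8}$ this is strictly larger than $\g'$, so ``$\g'\rtimes\R e_4$'' is not the right splitting there. Conjugation by $(h,t)$ acts on the nilradical as $\mathrm{Ad}_h\circ e^{tD}$ with $\mathrm{Ad}_h$ unipotent. The same real-part argument still forces $t=0$, and then $h\in\exp(\R e_1)$ with $De_1\neq 0$ forces $h=e$, so the center is trivial as you claim.
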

\begin{proof}
    All the Lie groups mentioned in (i--iii) and (v) are linearizable, hence it is a standard computation to check that either they do not have subgroups of order $2$ or that such subgroups are central, cf.\ Sections 3.1, 3.2, 3.3 and 4.1 in \cite{BR16}. For cases (iv) and (vi), the reasoning is as follows: the universal covers $H_3 \times \R$, where $H_3$ is the $3$-dimensional Heisenberg group, $G_{4,1}$, $G_{4,3}$ and $G^{-1}_{4,8}$ are linearizable, and it can be directly verified that they have no subgroups of order $2$. However, they have non-trivial center and some of their quotients might have (and actually, they do have) subgroups of order $2$. To rule out this possibility, we have to study more in depth their diagram of cover. We show here the explicit computations for $H_3 \times \R$, which presents the most complicated diagram. The other cases are completely analogous, using the explicit matrix representations and cover diagrams given in \cite{BR16}.
    
    Explicitly, $H_3 \times \R$ is represented as
    \[
    H_3 \times \R \cong \left\{  
    \begin{bmatrix}
        1 & x & w & 0 \\
        0 & 1 & y & 0 \\
        0 & 0 & 1 & 0 \\
        0 & 0 & 0 & e^z 
    \end{bmatrix}
    : w,x,y,z \in \R
    \right\}.
    \]
    Fix a basis $\{ e_j \}_{j=1}^4$ of its Lie algebra $\g$ such that
    \[
    \g = \{ w e_1 + x e_2 + y e_3 + z e_4 \}.
    \]
    Every connected Lie group whose universal cover is $H_3 \times \R$ fits in the diagram of covers

    {\small
    \begin{center}
        \begin{adjustbox}{center}
            \begin{tikzcd}
         & H_3 \times \R \arrow[ld] \arrow[rd] & \\
        H_3 \times \R / exp( \Z e_1) \arrow[rd] & & H_3 \times \R / exp( \Z e_4) \arrow[ld] \arrow[dd] \\
         & H_3 \times \R / exp( \Z e_1)exp(\Z e_4) & \\
         & & H_3 \times \R / exp( \Z (e_1 + \alpha e_4))exp(\Z e_4)
            \end{tikzcd}
        \end{adjustbox}
    \end{center}
    }
    where $exp$ denotes the exponential map and $\alpha \in (0,1) \setminus \Q$ is a real parameter. Let $\{[\id],[g]\}$ be a subgroup of order $2$ of $H_3 \times \R / exp( \Z e_1)exp(\Z e_4)$. Then $[g]^2 = [\id]$, which implies that $g^2 \in  exp( \Z e_1) \, exp(\Z e_4)$. Since $H_3 \times \R$ is linearizable, we can directly check that $g \in Z(H_3 \times \R)$, hence $\{[\id],[g]\}$ is central. The same argument applies to the quotient $H_3 \times \R / exp( \Z (e_1 + \alpha e_4))exp(\Z e_4))$. By Lemma \ref{lemma:quotient}, any non‑central subgroup of order $2$ in a group from the diagram would project to a non‑central subgroup of order $2$ in the corresponding maximal quotient, contradicting the computation above.
\end{proof}

\subsection{Lie groups with the same universal cover as \texorpdfstring{$\mathrm{SE}(2)\times \R$}{}.}\label{sec:se2}
Let $\mathrm{SE}(2)$ be the Euclidean group with universal cover $\widetilde{\mathrm{SE}}(2)$
\[
\widetilde{\mathrm{SE}}(2) =
\left\{ 
\begin{bmatrix}
    1 & 0 & 0 & 0\\
    x & \cos z & - \sin z & 0\\
    y & \sin z & \cos z & 0 \\
    0 & 0 & 0 & e^z
\end{bmatrix}
: x,y,z \in \R 
\right\}.
\]
Every Lie group whose universal cover is $\widetilde{\mathrm{SE}}(2) \times \R$ fits into the following diagram, cf.\ Section 3.5 in \cite{BR16}:
{\small
    \begin{center}
    \begin{tikzcd}
    \widetilde{\mathrm{SE}}(2) \times \R \arrow[r] \arrow[d] & \widetilde{\mathrm{SE}}(2) \times \T \arrow[d] \\
    \mathrm{SE}_n(2) \times \R \arrow[r] \arrow[d] & \mathrm{SE}_n(2) \times \T \arrow[d] \\
    \mathrm{SE} (2) \times \R \arrow[r] & \mathrm{SE}(2) \times \T \\
    \end{tikzcd}
    \end{center}
}
where $\mathrm{SE}_n(2)$ its the $n$-fold cover of $\mathrm{SE}(2)$. 
\begin{proposition}\label{prop:SE2}
    The universal cover $\widetilde{\mathrm{SE}}(2) \times \R$ has no non-central subgroups of order $2$. The Lie group $\mathrm{SE}_n(2) \times \R$, $n \ge 1$, has non-central subgroups of order $2$ if and only if $n$ is odd. In such a case, the subgroup is unique, up to conjugacy, and is given by $\{ [\id], [g_n] \}$, where
    \[
    g_n = \begin{bmatrix}
        1 & 0 & 0 & 0 \\
        0 & -1 & 0 & 0 \\
        0 & 0 & -1 & 0 \\
        0 & 0 & 0 & e^{n\pi} \\
    \end{bmatrix} \times \{ 0 \},
    \]
\end{proposition}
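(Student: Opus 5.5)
We will work with the explicit matrix model. Write elements of $\widetilde{\mathrm{SE}}(2)\times\R$ as $(v,z;t)$ with $v=(x,y)\in\R^2$, $z\in\R$, $t\in\R$. The product is
\[
(v,z;t)(v',z';t')=(v+R_z v',\,z+z';\,t+t'),
\]
where $R_z$ is rotation by angle $z$ (the $e^z$ entry is only a faithful bookkeeping of $z$). Then $\mathrm{SE}_n(2)\times\R$ is the quotient by the central subgroup generated by $(0,2\pi n;0)$. The center of $\widetilde{\mathrm{SE}}(2)\times\R$ is $\{(0,2\pi k;t)\}$.

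\textbf{Universal cover.} Solve $g^2=\id$. This gives $2z=0$ and $2t=0$, so $z=t=0$. Then $v+R_0 v=2v=0$, so $g=\id$. Hence there are no elements of order $2$ at all, and in particular no non-central subgroups of order $2$.

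\textbf{The quotient $\mathrm{SE}_n(2)\times\R$.} A class $[g]$ has order $2$ iff $g^2\in\exp(\Z\,2\pi n e_z)$ and $g$ is not in that subgroup. The condition $g^2=(0,2\pi nk;0)$ forces $t=0$ and $z=\pi nk$, and then $(\Id+R_{\pi nk})v=0$.
\begin{itemize}
\item If $nk$ is even, then $R_{\pi nk}=\Id$, which forces $v=0$. So $g$ is central in the cover, and therefore $[g]$ is central.
\item If $nk$ is odd, then $R_{\pi nk}=-\Id$ and $v$ is arbitrary. Here $z\equiv n\pi$ modulo $2\pi n$, since $k$ is odd.
\end{itemize}
So order-$2$ classes with $v$ arbitrary exist exactly when $n$ is odd.

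\textbf{Non-centrality.} For $n$ odd, the element $g=(v,n\pi;0)$ is non-central in the quotient. To see this, use Lemma~\ref{lemma:quotient}, or check directly: conjugating $(w,0;0)$ by $g$ gives $(R_{n\pi}w,0;0)=(-w,0;0)$. This differs from $(w,0;0)$ by a non-trivial translation, which is not in the center $\{(0,2\pi k;t)\}$, for any $w\neq0$.

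For $n$ even we must exclude every order-$2$ class, not only those coming from $g^2=\id$ in the cover. By the computation above, $nk$ is always even in this case, so every order-$2$ class is central. This also covers $n=1$ correctly: there $nk$ odd with $n=1$ is allowed, giving the rotation by $\pi$ in $\mathrm{SE}(2)$.

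\textbf{Uniqueness up to conjugacy.} Conjugate $g=(v,n\pi;0)$ by a translation $(u,0;0)$. This gives
\[
(u + v - R_{n\pi}u,\; n\pi;\; 0) = (v+2u,\; n\pi;\; 0),
\]
so choosing $u=-v/2$ yields $g_n=(0,n\pi;0)$. This is exactly the matrix in the statement: the rotation block is $-\Id$, the entry $e^{n\pi}$ records $z=n\pi$, and the $\R$-component is $0$.

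Finally, any other representative of the same class differs by the central lattice, which does not change $[g]$. The representative with $z=-n\pi$ equals $z=n\pi$ modulo $2\pi n$. Hence the subgroup $\{[\id],[g_n]\}$ is unique up to conjugacy.

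The only delicate step is the bookkeeping of which lifts $g^2$ may land in the central lattice, in particular whether $t$ could pick up something. It cannot, because the $\R$ factor is not quotiented in $\mathrm{SE}_n(2)\times\R$. With that settled, the case analysis on the parity of $nk$ is the core of the argument.
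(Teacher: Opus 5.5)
Your proof is correct and follows essentially the same route as the paper. Both solve $g^2\in H_n\times\{0\}$ explicitly, split according to the parity of $nk$, and conjugate the translation part away by a pure translation; your conjugation computations supply the non-centrality and normalization steps that the paper only asserts. One small correction: Lemma~\ref{lemma:quotient} does not apply here, since it requires $g$ to have order $2$ already in the (torsion-free) cover, so keep your direct check $g(w,0;0)g^{-1}(w,0;0)^{-1}=(-2w,0;0)\notin H_n$ as the actual argument.
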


\begin{proof}
    Denote by $exp$ the exponential map and by $e_3$ the element in the Lie algebra of $\widetilde{\mathrm{SE}}(2) \times \R$ corresponding to the $z$ variable. The Lie group $\mathrm{SE}_n (2) \times \R$ is obtained as the quotient $\widetilde{\mathrm{SE}}(2) \times \R / ( H_n \times \{ 0 \})$, where $H_n = exp( 2 \pi n \Z e_3)$, $n \ge 0$. Note that the case $ n =0$ corresponds, at least formally, to $\widetilde{\mathrm{SE}}(2) \times \R $. An element $g \in \widetilde{\mathrm{SE}}(2) \times \R$ defines a subgroup of order $2$ in $\mathrm{SE}_n (2) \times \R$ if and only if $g^2 \in H_n \times \{ 0\}$. Since the $\R$-factor commutes with elements of $\widetilde{\mathrm{SE}}(2)$, we can treat it separately, that will give $\pi_\R (g) = 0$. Writing explicitly the equation $g^2 \in H_n$, we get the matrix equality
    \begin{gather*}
    \begin{bmatrix}
        1 & 0 & 0 & 0 \\
        x (1 + \cos z) - y \sin z & \cos 2z& - \sin 2z & 0 \\
        y ( 1+ \cos z) + x \sin z & \sin 2z & \cos 2z  & 0 \\
        0 & 0 & 0 & e^{2z} 
    \end{bmatrix} = \\
    \begin{bmatrix}
        1 & 0 & 0 & 0 \\
        0 & \cos 2\pi n k & - \sin 2 \pi nk & 0 \\
        0 & \sin 2\pi n k & \cos 2 \pi nk & 0 \\
        0 & 0 & 0 & e^{2 \pi nk}
    \end{bmatrix}
    \end{gather*}   
    for some $k \in \Z$. If $nk$ is even, then necessarily $ z \in 2 \pi \Z$, $x=y=0$, and the subgroup defined by $[g]$ is central. If $nk$ is odd, then
    \[
    g_{nk} (a,b) = \begin{bmatrix}
        1 & 0 & 0 & 0 \\
        a & -1 & 0 & 0 \\
        b & 0 & -1 & 0 \\
        0 & 0 & 0 & e^{nk\pi} \\
    \end{bmatrix} \times \{ 0 \},
    \]
    with $a$, $b \in \R$, defines a non-central subgroup of order $2$ in the quotient $\mathrm{SE}_n (2) \times \R$. Moreover, up to conjugacy by an element of $\mathrm{SE}_n (2) \times \R$, we can assume $a=b=0$. Passing to the  quotient, we get a unique class $[g_n]$, as in the statement of the proposition.
\end{proof}

Taking quotients by translations in the $\R$-factor, one gets a similar result for $\widetilde{\mathrm{SE}}(2) \times \T$ and $\mathrm{SE}_n(2) \times \T$. In this case there are two non-central subgroups of order $2$, namely $\{ [\id], [g_n] \}$ and $\{ [\id], [g_n^\pi] \}$, where
\[
 g_n^\pi = \begin{bmatrix}
        1 & 0 & 0 & 0 \\
        0 & -1 & 0 & 0 \\
        0 & 0 & -1 & 0 \\
        0 & 0 & 0 & e^{n\pi} \\
    \end{bmatrix} \times \{ e^{i \pi} \}.
\]

\subsection{Lie groups whose universal cover is \texorpdfstring{$G^0_{4,9}$}{} or \texorpdfstring{$G_{4,10}$}{}.}\label{sec:G9} 
The computations for the Lie groups $G^0_{4,9}$ and $G_{4,10}$ are very similar to those we performed for $\widetilde{\mathrm{SE}}(2)\times \R$. Hence, we just state our result, explicitly writing the non-central subgroup of order $2$ and omitting the proof.\\

The Lie group $G^0_{4,9}$ is linearizable and can be realized as
\[
G^0_{4,9} \cong \left\{
\begin{bmatrix}
    1& -x \cos z - y \sin z & y \cos z - x \sin z & -2 w & 0 \\
    0 & \cos z & \sin z & y & 0 \\
    0 & - \sin z & \cos z & x & 0 \\
    0 & 0 & 0 & 1 & 0 \\
    0 & 0 & 0 & 0 & e^z
\end{bmatrix}: w,x,y,z \in \R
\right\}.
\]
Quotients of $G^0_{4,9}$ fit into the diagram 
    {\small
        \begin{center}
        \begin{tikzcd}
        G^0_{4,9} \arrow[r] \arrow[d] & G^0_{4,9}/ exp (\Z e_1) \arrow[d] \\
        G^0_{4,9}/ exp (2 n \pi \Z e_4) \arrow[r] \arrow[d] & G^0_{4,9}/ exp (\Z e_1) \, exp (2 n \pi \Z e_4)\arrow[d] \\
        G^0_{4,9}/ exp (2 \pi \Z e_4) \arrow[r] & G^0_{4,9}/ exp (\Z e_1) \, exp (2 \pi \Z e_4) \\
        \end{tikzcd}
        \end{center}
    }
Their non-central subgroups of order $2$ are described in the next proposition.

\begin{proposition}\label{prop:G9}
    The universal cover $G^0_{4,9}$ and the quotient $G^0_{4,9}/ exp (\Z e_1)$ have no non-central subgroups of order $2$. The quotient $G^0_{4,9}/ exp (2 \pi n \Z e_4)$ has a unique, up to conjugacy, non-central subgroup of order $2$, given by $\{ [\id], [g_n] \}$, where
    \[
    g_n = \begin{bmatrix}
        1 & 0 & 0 & 0 & 0 \\
        0 & -1 & 0 & 0 & 0 \\
        0 & 0 & -1 & 0 & 0 \\
        0 & 0 & 0 & 1 & 0 \\
        0 & 0 & 0 & 0 & e^{n\pi}
    \end{bmatrix}.
    \]
    For $G^0_{4,9}/ exp (\Z e_1) \, exp (2 \pi n \Z e_4)$, we have two non-central subgroups of order $2$, generated by the class of
    \[
    g_n^\sigma = \begin{bmatrix}
        1 & 0 & 0 & \sigma & 0 \\
        0 & -1 & 0 & 0 & 0 \\
        0 & 0 & -1 & 0 & 0 \\
        0 & 0 & 0 & 1 & 0 \\
        0 & 0 & 0 & 0 & e^{n \pi}
    \end{bmatrix}\quad \text{with $\sigma \in \{ 0 , -1 \}$}.
    \]
\end{proposition}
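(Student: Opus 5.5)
The plan is to mimic the computation done for $\widetilde{\mathrm{SE}}(2)\times\R$ in Proposition \ref{prop:SE2}, using the explicit matrix realization of $G^0_{4,9}$. Write a general element as $g=g(w,x,y,z)$. I will first compute the group law in these coordinates. Conjugation by $z$ rotates the $(x,y)$-plane by angle $z$, and $w$ picks up a symplectic cocycle in $(x,y)$. So the group is a central extension of $\widetilde{\mathrm{SE}}(2)$ by the $w$-line. Accordingly, $\exp(\R e_1)$ is central, and so are the elements $\exp(2\pi k e_4)$ with $z\in 2\pi\Z$ and $x=y=0$.

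The core step is to solve $g^2\in\Gamma$ for each discrete central subgroup $\Gamma$ in the diagram. The candidates are $\Gamma=\{e\}$, $\exp(\Z e_1)$, $\exp(2\pi n\Z e_4)$, and their products.

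From the $(x,y,z)$-block, exactly as in Proposition \ref{prop:SE2}, I get $2z\in 2\pi n\Z$. If $z\in 2\pi n\Z$ is an even multiple (i.e.\ $nk$ even), then the rotation part is trivial, which forces $x=y=0$. The remaining $w$-equation $2w\in\Z$ (or $2w=0$) then gives a central element.

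If $nk$ is odd, the rotation part is $-\Id$. In that case the $(x,y)$-translation part of $g^2$ vanishes automatically, and the $w$-component of $g^2$ equals $2w$ plus a cocycle term.

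I then conjugate by elements $g(0,a,b,0)$, which translate $(x,y)$ by $2(a,b)$ up to the rotation, to set $x=y=0$. After this the cocycle term vanishes, so the condition reads $2w=0$ when there is no $e_1$-quotient, and $2w\in\Z$ when there is one.

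This yields the following cases.
\begin{itemize}
\item In $G^0_{4,9}$ and in $G^0_{4,9}/\exp(\Z e_1)$ there is no $z$-lattice, so $2z=0$ and all solutions are central.
\item In $G^0_{4,9}/\exp(2\pi n\Z e_4)$ one gets $w=0$, giving the unique class $[g_n]$. Uniqueness up to conjugacy uses that $z$ is only defined modulo $2\pi n$, so the odd multiples $nk\pi$ all reduce to $n\pi$ modulo conjugation and the lattice.
\item In the double quotient, $w\in\{0,\tfrac12\}$ modulo $\Z$. With the normalization $-2w$ in the $(1,4)$ entry, this gives the two representatives $\sigma\in\{0,-1\}$.
\end{itemize}

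In every case non-centrality is clear, since $g$ acts on the $(x,y)$-plane by $-\Id$.

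The main obstacle I expect is showing that the two classes $\sigma=0$ and $\sigma=-1$ in the double quotient are not conjugate. Conjugation changes $w$ by $\Z$ (after passing to the quotient) plus cocycle terms. I will need to check that any conjugation preserving the normal form $x=y=0$ changes $w$ only by integers. The idea is that the cocycle contribution from conjugating by $(a,b,c)$ is a quadratic expression which vanishes once the translation part is renormalized to zero. I will verify this by direct matrix multiplication.

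A secondary check is the odd/even dichotomy in $n$. The diagram as drawn implicitly allows odd $k$ for every $n$, and the statement claims a subgroup for every $n$. So I must verify that taking $k=1$ always gives rotation by $n\pi$; this is $-\Id$ only when $n$ is odd. I would resolve this by reading $e^{n\pi}$ as the lattice-compatible choice and confirming that for $n$ even the candidate is central. If so, the statement is implicitly restricted to odd $n$, as in Proposition \ref{prop:SE2} and as it is used in Theorem \ref{thm:diffeotype}.
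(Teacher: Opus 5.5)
Your proposal is correct and follows the route the paper indicates: the paper omits the proof as a repetition of the $\widetilde{\mathrm{SE}}(2)\times\R$ computation of Proposition \ref{prop:SE2}, and your argument is that computation with the central $w$-cocycle carried along. You are also right that the statement tacitly requires $n$ odd, as in Proposition \ref{prop:G10} and in the proof of Theorem \ref{thm:diffeotype}, and your ``main obstacle'' is easier than you expect: when the rotation part of $g$ is $-\Id$, conjugation by any element leaves the $w$-coordinate exactly unchanged (the two cocycle terms cancel by antisymmetry), so $w \bmod \Z$ is a conjugacy invariant and the classes $\sigma=0$ and $\sigma=-1$ are not conjugate.
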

\vspace{.2cm}

The Lie group $G_{4,10}$ is linearizable and can be realized as
\[
G_{4,10} \cong \left\{
\begin{bmatrix}
    e^{-y} \cos z & e^{-y} \sin z & x & 0 \\
    -e^{-y} \sin z & e^{-y} \cos z & w & 0 \\
    0 & 0 & 1 & 0 \\
    0 & 0 & 0 & e^z
\end{bmatrix}: w,x,y,z \in \R
\right\}.
\]
Its non-central subgroups of order $2$ are described in the next proposition.

\begin{proposition}\label{prop:G10}
    The universal cover $G_{4,10}$ has no non-central subgroups of order $2$. Quotients of $G_{4,10}$ are parametrized by $n \in \Z$ and each of them has non-central subgroups of order $2$ if and only if $n$ is odd. In that case, the subgroup is unique, up to conjugacy, and is given by $\{ [\id], [g_n] \}$, where
    \[
    g_n = \begin{bmatrix}
        -1 & 0 & 0 & 0 \\
        0 & -1 & 0 & 0 \\
        0 & 0 & 1 & 0 \\
        0 & 0 & 0 & e^{n \pi}
    \end{bmatrix}.
    \]
\end{proposition}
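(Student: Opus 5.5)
We treat this exactly as the computation for $\widetilde{\mathrm{SE}}(2)\times\R$ in Proposition \ref{prop:SE2}, using the explicit faithful matrix realization of $G_{4,10}$. First we locate the center. From the matrix form, an element is determined by $(w,x,y,z)$. The linear part is $e^{-y}R(z)$, where $R(z)$ is the rotation by $z$, and the translation part is $(x,w)$. The last entry $e^{z}$ records the full angle, so $z$ is not reduced modulo $2\pi$. A direct commutation check shows that the center of $G_{4,10}$ is $\exp(2\pi\Z e_4)$, the elements $(0,0,0,2\pi k)$. This subgroup is discrete, torsion-free and central. Hence every connected group with universal cover $G_{4,10}$ is one of the quotients $G_{4,10}/\exp(2\pi n\Z e_4)$, $n\ge 0$, with $n=0$ formally giving $G_{4,10}$ itself. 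This gives the parametrization by $n$ in the statement.

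Next we solve $g^2\in H_n:=\exp(2\pi n\Z e_4)$. Squaring $g=(w,x,y,z)$ gives
\begin{itemize}
\item linear part $e^{-2y}R(2z)$,
\item translation $(\Id+e^{-y}R(z))(x,w)^{T}$,
\item last entry $e^{2z}$.
\end{itemize}
Equating with the element $(0,0,0,2\pi nk)$ forces $y=0$ and $z=\pi nk$.
\begin{itemize}
\item If $nk$ is even, then $R(z)=\Id$, so $2(x,w)=0$ and $g\in H_n$. Then $[g]=[\id]$, which is central.
\item If $nk$ is odd, then $R(z)=-\Id$ and the translation condition is automatic, so $g=g_{nk}(x,w)$ with $(x,w)$ arbitrary.
\end{itemize}
For $n=0$ the only solution is $g=\id$, so $G_{4,10}$ has no elements of order $2$ at all. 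For $n$ even, every solution has $nk$ even, so there is no non-central subgroup.

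For $n$ odd we must check two things. The first is non-centrality. Conjugating a translation $(x',w')$ by $g_{nk}$ gives $(-x',-w')$, so $[g]$ is not central in the quotient; alternatively, apply Lemma \ref{lemma:quotient} after noting $g_{nk}$ is non-central upstairs. The second is reduction to a normal form. Conjugating by a pure translation $t$ replaces $(x,w)$ by $(x,w)+(\Id-R(nk\pi))t=(x,w)+2t$, so we may take $x=w=0$. Modulo $H_n$, $z=nk\pi$ is equivalent to $n\pi$ when $k$ is odd. If $k$ is even, $nk$ is even, and that case was already excluded. This yields the unique class $[g_n]$.

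The main obstacle is the first step: making sure the center, and hence the list of quotients, is exactly $\exp(2\pi\Z e_4)$. In particular we must rule out extra central elements coming from the $w,x$ translations, which are not central because the linear part acts on them nontrivially. The remaining steps are routine matrix algebra.
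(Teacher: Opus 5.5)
Your argument is correct and is the computation the paper has in mind. The paper omits this proof and defers to the $\widetilde{\mathrm{SE}}(2)\times\R$ case of Proposition~\ref{prop:SE2}, and you reproduce that argument for $G_{4,10}$: you identify the center as $\exp(2\pi\Z e_4)$, solve $g^2\in H_n$, then normalize by conjugating with a translation and reducing $z$ modulo $H_n$.

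Two small slips should be fixed.

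First, in the case $nk$ even you assert $g\in H_n$, hence $[g]=[\id]$. This is false when $n$ is even and $k$ is odd. For example, with $n=2$, $k=1$ you get $g=\exp(2\pi e_4)\notin H_2=\exp(4\pi\Z e_4)$, and $[g]$ is a nontrivial element of order $2$. However, $g=\exp(\pi nk\,e_4)$ with $nk$ even lies in the center $\exp(2\pi\Z e_4)$ of $G_{4,10}$. So $[g]$ is central, and your conclusion that there is no non-central subgroup for $n$ even still holds.

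Second, your alternative appeal to Lemma~\ref{lemma:quotient} does not apply as stated. That lemma assumes $\{\id,g\}$ is already a subgroup of order $2$ in the covering group. But $g_{nk}$ has infinite order in $G_{4,10}$, which you yourself showed is torsion-free. The lemma's proof does go through under the weaker hypothesis that $g^2$ is central. In any case, your direct argument suffices: conjugation by $g_{nk}$ sends a translation $t$ to $-t$, and $-2t\notin H_n$ for $t\neq 0$.
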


\subsection{Lie groups whose universal cover is \texorpdfstring{$\mathrm{SU}(2)\times \R$}{}.} Let $\mathrm{SU}(2)$ be the group of unitary $2 \times 2$ matrices with determinant equal to $1$, that is
\[
\mathrm{SU}(2) = \{ g \in \C^{2 \times 2}: g^* g = \Id, \, \, \det g = 1 \}.
\]
Every connected Lie group whose universal cover is $\mathrm{SU}(2) \times \R$ fits in the following diagram of covers
{\small
    \begin{center}
    \begin{tikzcd}
     & \mathrm{SU}(2) \times \R \arrow[ld] \arrow[d] \arrow[rd] & \\
    \mathrm{SO}(3) \times \R  \arrow[rd] & \mathrm{SU}(2) \times \T \arrow[d] & \mathrm{U}(2) \arrow[ld] \\
     & \mathrm{SO}(3) \times \T & \\
    \end{tikzcd}
    \end{center}
}
where $\mathrm{SO}(3)$ is the group of special orthogonal matrices
\[
\mathrm{SO}(3) = \{ g \in \R^{3 \times 3}: g^t g = \Id, \, \, \det g = 1 \}
\]
and $\mathrm{U}(2)$ is the group of unitary matrices
\[
\mathrm{U}(2) = \{ g \in \C^{2 \times 2}: g^* g = \Id \}.
\]

\begin{proposition}\label{prop:SU2}\hfill
\begin{itemize}
    \item [(i)] $\mathrm{SU}(2) \times \R$ and $\mathrm{SU}(2) \times \T$ have no non-central subgroups of order $2$.

    \item [(ii)] $\mathrm{U}(2)$ has a unique, up to conjugacy, non-central subgroup of order $2$, given by
    \[
    \left\{ \Id, 
    \begin{bmatrix}
        1 & 0 \\
        0 & -1 \\
    \end{bmatrix} \right\}.
    \]

    \item [(iii)] $\mathrm{SO}(3)$ has a unique, up to conjugacy, non-central subgroup of order $2$, given by
    \[
    \left\{ \Id, 
    \begin{bmatrix}
        1 & 0 & 0\\
        0 & -1 & 0\\
        0 & 0 & -1
    \end{bmatrix} \right\}.
    \]
    Hence, $\mathrm{SO}(3) \times \R$ and $\mathrm{SO}(3) \times \T$ have one and two non-central subgroups of order $2$, respectively.
\end{itemize}
\end{proposition}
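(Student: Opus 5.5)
The statement has three parts, and I will handle each by direct matrix computation in the standard realizations. Throughout I use that an element of order $2$ in $\mathrm{SU}(2)$ is a solution of $g^2=\Id$ with $g\neq\Id$. Diagonalizing the unitary matrix $g$ as $\operatorname{diag}(\lambda,\bar\lambda)$ with $|\lambda|=1$, the condition $\lambda^2=1$ leaves only $\lambda=-1$, so $g=-\Id$, which is central.

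\textbf{Part (i).} For $\mathrm{SU}(2)\times\R$, an element $(g,t)$ of order $2$ must have $2t=0$, hence $t=0$ and $g=-\Id$. This element is central. For $\mathrm{SU}(2)\times\T$ we get $g\in\{\pm\Id\}$ and $t\in\{1,e^{i\pi}\}$, and every such pair is again central. So neither group has non-central subgroups of order $2$.

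\textbf{Part (ii).} Let $g\in\mathrm{U}(2)$ with $g^2=\Id$. Then $g$ is unitarily diagonalizable with eigenvalues in $\{\pm1\}$. If both eigenvalues coincide, $g=\pm\Id$, which is central. Otherwise $g$ has eigenvalues $1$ and $-1$, and it is conjugate in $\mathrm{U}(2)$ to $\operatorname{diag}(1,-1)$. This element is non-central, since it does not commute with $\begin{bmatrix}0&1\\1&0\end{bmatrix}$.

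\textbf{Part (iii).} Let $g\in\mathrm{SO}(3)$ with $g^2=\Id$ and $g\neq\Id$. Then $g$ is orthogonally diagonalizable with eigenvalues in $\{\pm1\}$ and determinant $1$, so its eigenvalues are $(1,-1,-1)$. Hence $g$ is a rotation by $\pi$, conjugate to $\operatorname{diag}(1,-1,-1)$. It is non-central because $Z(\mathrm{SO}(3))$ is trivial.

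For the products, an element $(g,t)$ of order $2$ needs $g^2=\Id$ and $t^2=e$ in the second factor, and it is non-central exactly when $g\neq\Id$.
\begin{itemize}
\item In $\mathrm{SO}(3)\times\R$ we must have $t=0$, which gives exactly one non-central subgroup up to conjugacy.
\item In $\mathrm{SO}(3)\times\T$ we can take $t=1$ or $t=e^{i\pi}$. Conjugation acts trivially on the abelian factor, so these two choices are not conjugate, and there are exactly two non-central subgroups.
\end{itemize}

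The only point needing care is the conjugacy count in the products: I must check that conjugation does not mix the $\T$-component, which holds because $\T$ is a central direct factor.
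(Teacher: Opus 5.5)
Your proposal is correct and follows essentially the paper's route: diagonalize an element with $g^2=\Id$, read off the admissible eigenvalue patterns $\pm1$ under the determinant constraint, and handle the $\R$ and $\T$ factors through their elements of order at most $2$; in $\mathrm{SO}(3)$ the orthogonal diagonalizability you invoke follows from $g^{\top}=g^{-1}=g$, the analogue of the paper's self-adjointness remark. Your explicit observation that the two choices $t\in\{1,e^{i\pi}\}$ in $\mathrm{SO}(3)\times\T$ are not conjugate, because $\T$ is a central factor, is a detail the paper leaves implicit.
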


\begin{proof}
    Suppose that $g \in \mathrm{SU}(2)$ has order $2$. Then by definition we have $g^{-1} = g^*$. Moreover, $g^{-1} = g$ since $g^2 = \Id$. Hence $g$ is a self-adjoint matrix and it is diagonalizable via conjugacy by a unitary matrix. Since $g^2 = \Id$, its only eigenvalues are $\pm 1$. This, together with the condition $\det g = 1$, implies that either $g= \Id $, which is excluded, or $g = -\Id$, which provides a central subgroup. The same is true for the quotient $\mathrm{SU}(2) \times \T$, where every subgroup of order $2$ is central. This proves (i).\\
    The proof of (ii) is very similar. Let $g \in \mathrm{U}(2)$ be an element of order $2$. Then $g = g^*$, $g$ can be diagonalized over $\C$ and its eigenvalues are $\pm 1$. If they have the same sign, we obtain a central subgroup. Hence the only possibility is that $g$ is obtained from  
    \[ 
    \begin{bmatrix}
        1 & 0 \\
        0 & -1 
    \end{bmatrix}
    \]
    by conjugacy. The proof of (iii) is the same as (ii), replacing unitary with orthogonal, to study subgroups of $\mathrm{SO}(3)$. The statement follows after taking product with $\R$, which has no element of order $2$, or $\T$, which has $e^{i\pi}$ as the only element of order $2$.
\end{proof}

\subsection{Lie groups with the same universal cover as \texorpdfstring{$\mathrm{SL}(2, \R)\times \R$}{}.}\label{sec:sl2} Let $\mathrm{SL}(2, \R)$ be the special linear group. Denote by $\Tilde{A}$ its universal cover. Let $\{ e_j \}_{j=1}^4$ be a basis of the Lie algebra of $\Tilde{A} \times \R$. Then $\Tilde{A} \times \R$ is diffeomorphic to $\R^4$ via the map
\begin{equation}\label{eq:diffeo}
\begin{split}
\R^4 &\longrightarrow \Tilde{A} \times \R \\
(w,x,y,z) &\longmapsto exp (y e_3) exp (w e_1 + x e_2) exp ( z e_4).
\end{split}
\end{equation}
Discrete central subgroups of $\Tilde{A} \times \R$ are classified in \cite{BR16}. We adopt the following notation
\begin{align*}
    &H_n^\alpha = exp ( n \Z (2 \pi e_3 + \alpha e_4)) exp (n  \Z e_4) \\
    &H_n^1 = exp ( n \Z (2 \pi e_3 + e_4))\\
    &H_n = exp ( 2n\pi \Z e_3 )
\end{align*}
Every Lie group whose universal cover is $\Tilde{A} \times \R$ fits in the following diagram of covers
{\small
    \begin{center}
    \begin{tikzcd}
     & \Tilde{A} \times \R / H^\alpha_1 & \\
     & \Tilde{A} \times \R / H^\alpha_n \arrow[u] & \\
     & \Tilde{A} \times \R \arrow[ld] \arrow[d] \arrow[rd] \arrow[u] & \\
    \Tilde{A} / H_n \times \R \arrow[rd] \arrow[d] & \Tilde{A} \times \T \arrow[d] & \Tilde{A} \times \R  / H^1_n \arrow[d] \arrow[ld] \\
     \mathrm{SO}(2,1)_0 \times \R \arrow[rd] & \Tilde{A} / H_n \times \T \arrow[d] & \Tilde{A} \times \R / H^1_1 \arrow[ld] \\
     & \mathrm{SO}(2,1)_0 \times \T & \\
    \end{tikzcd}
    \end{center}
}

\begin{proposition}\label{prop:sl2}
    The universal cover $\Tilde{A} \times \R$ has no non-central subgroups of order $2$. The quotients $\Tilde{A} / H_n \times \R$, $\Tilde{A} \times \R / H^1_n $, and $\Tilde{A} \times \R / H^\alpha_n$, all have non-central subgroups of order $2$ if and only if $n$ is odd. Such a subgroup is unique, up to conjugacy, and it is given by $[g]$, where $g = exp (n \pi e_3)$, $g = exp (n \pi e_3 + \frac{n}{2} e_4)$, and $g = exp (n \pi e_3) \, exp ( \frac{n}{2} (\alpha + 1) e_4)$, respectively.
\end{proposition}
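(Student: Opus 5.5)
To prove Proposition \ref{prop:sl2}, I would follow the same scheme as for $\widetilde{\mathrm{SE}}(2)\times\R$ in Proposition \ref{prop:SE2}. First I project to $\mathrm{SL}(2,\R)$, or to $\mathrm{PSL}(2,\R)\cong \mathrm{SO}(2,1)_0$. The $\R$-factor is central, so the commutator condition is governed entirely by the $\Tilde A$-component. Writing $g=(a,t)\in\Tilde A\times\R$, the element $[g]$ has order $2$ in a quotient $(\Tilde A\times\R)/D$ exactly when $g^2\in D$ and $g\notin D$. It is non-central exactly when $a\notin Z(\Tilde A)$, since $D$ is central and Lemma \ref{lemma:quotient} controls the passage to quotients.

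\textbf{Step 1 (the universal cover).} $\Tilde A\times\R$ has no torsion in the $\R$-factor, and $\Tilde A$ has no non-trivial elements of finite order: every element of $\Tilde A$ lies in a one-parameter subgroup or projects to a parabolic or hyperbolic element, and the elliptic lifts $\exp(\theta e_3)$ have infinite order. So every order-$2$ element is trivial, which proves the first claim.

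\textbf{Step 2 (solving $g^2\in D$).} The squares that can land in the center $Z(\Tilde A)=\exp(\pi\Z e_3)$ (with $\exp(2\pi e_3)$ generating the kernel of $\Tilde A\to\mathrm{SL}(2,\R)$ and $\exp(\pi e_3)\mapsto -\Id$) come from elliptic elements only. Hyperbolic and parabolic squares project to non-central elements of $\mathrm{SL}(2,\R)$. Thus $a$ is conjugate to $\exp(\theta e_3)$, and $a^2=\exp(2\theta e_3)\in\exp(\pi\Z e_3)$ forces $\theta\in\frac{\pi}{2}\Z$.

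\begin{itemize}
\item If $\theta\in\pi\Z$, then $a$ is central and the subgroup is central.
\item So $\theta=\frac{\pi}{2}m$ with $m$ odd. Then $a^2=\exp(m\pi e_3)$.
\end{itemize}

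\textbf{Step 3 (matching each lattice $D$).}
\begin{itemize}
\item For $D=H_n\times\{0\}$ we need $a^2\in\exp(2n\pi\Z e_3)$ and $t=0$. This gives $m\pi=2n\pi k$, and $m$ odd forces $n$ odd. Hmm — at this point the normalization of $e_3$ must be rechecked: with the paper's parametrization the candidate element is $g=\exp(n\pi e_3)$, so the $e_3$-loop generating $\ker(\Tilde A\to\mathrm{SO}(2,1)_0)$ has length $2\pi$ in the paper's normalization. I would fix the convention so that $\exp(2\pi e_3)$ generates the center of $\Tilde A$. Then $g^2=\exp(2n\pi e_3)\in H_n$ with $g=\exp(n\pi e_3)$ non-central exactly when $n\pi\notin 2\pi\Z$, that is, when $n$ is odd.
\item For $D=H^1_n$, the component $t$ must be chosen so that $g^2=\exp(n(2\pi e_3+e_4))$, which gives $t=\frac n2$.
\item For $D=H^\alpha_n$, the condition $g^2\in D$ gives $2t\in n(\alpha\Z+\Z)$ paired with the $e_3$-exponent. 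Choosing the generator $n(2\pi e_3+\alpha e_4)+n e_4$ gives $t=\frac n2(\alpha+1)$.
\item Other admissible choices of $t$ differ by elements of $D$ or give conjugate classes.
\item Uniqueness up to conjugacy follows because all elliptic elements with the same rotation number are conjugate in $\Tilde A$, and the parity of $n$ determines whether a non-central solution exists.
\end{itemize}

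\textbf{Main obstacle.} The delicate step is the bookkeeping of the center $Z(\Tilde A)$ against the normalization of $e_3$ in \eqref{eq:diffeo}, together with the claim that non-elliptic elements cannot square into $D$. For the latter I would argue via the projection to $\mathrm{PSL}(2,\R)$: a square that is central there must be the identity, and only elliptic elements of order $2$ in $\mathrm{PSL}(2,\R)$, that is rotations by $\pi$, qualify. Lemma \ref{lemma:quotient} then transfers non-centrality down the remaining torsion-free quotients, such as passing from $\R$ to $\T$.
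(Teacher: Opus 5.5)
Your route is the paper's: project to $\mathrm{SL}(2,\R)$, conjugate $g$ into $\exp(\R e_3)\times\R$, solve $g^2\in\Gamma$ lattice by lattice, and read non-centrality off the $e_3$-exponent. The execution, however, has two gaps.

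\textbf{The convention for $e_3$.} Step 2 uses the convention $\exp(\pi e_3)\mapsto-\mathrm{Id}$, $Z(\widetilde{A})=\exp(\pi\Z e_3)$. This does not fit the paper's lattices. In that convention $H_n=\exp(2n\pi\Z e_3)$ lies in $\ker(\widetilde{A}\to\mathrm{SL}(2,\R))$, so $\widetilde{A}/H_n$ covers $\mathrm{SL}(2,\R)$ and has no non-central involutions at all. Indeed your equation $m\pi=2n\pi k$ makes $m$ even, so ``$m$ odd forces $n$ odd'' is a non sequitur.

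You then switch to $Z(\widetilde{A})=\exp(2\pi\Z e_3)$, equivalently $\operatorname{Ad}_{\exp(se_3)}$ rotates $\mathfrak p$ by $s$. But you only verify that $\exp(n\pi e_3)$ works. Neither the necessity of $n$ odd nor the classification is established. Step 2 must be redone in the new convention. With $g=\exp(ye_3)\exp(te_4)$, the condition $g^2\in\Gamma$ gives $y=n\pi k$, together with $t=0$, $t=\frac{nk}{2}$, or $t=\frac n2(k\alpha+b)$ with $b\in\Z$. Then $g$ is non-central iff $nk$ is odd. This is exactly the paper's computation.

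Also, Lemma \ref{lemma:quotient} does not justify ``$[g]$ non-central iff $a\notin Z(\widetilde{A})$''. That lemma assumes $g^2=e$ in the covering group, whereas here $g$ has infinite order. Use instead $Z(G)=\ker\operatorname{Ad}$ for connected $G$, as the paper does.

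\textbf{Uniqueness for $H^\alpha_n$.} Your sentence ``other admissible choices of $t$ differ by elements of $D$ or give conjugate classes'' fails for $H^\alpha_n$. Multiplying by elements of $D$ changes $(k,b)$ to $(k+2j,b+2l)$. So you can normalize $k=1$, but $b$ only to $\{0,1\}$. The two representatives $\exp(n\pi e_3)\exp(\frac n2\alpha e_4)$ and $\exp(n\pi e_3)\exp(\frac n2(\alpha+1)e_4)$ differ by $\exp(\frac n2 e_4)\notin D$.

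They are also not conjugate. The $\R$-factor is central, so a conjugacy modulo $D$ would require $c\exp(n\pi e_3)c^{-1}=\exp(n\pi(1-2j)e_3)$ in $\widetilde{A}$. This forces $j=0$, because the translation number is a conjugacy invariant on $\widetilde{A}$ and is a nonzero multiple of $y$ on $\exp(ye_3)$. Then one would need $\frac n2\in n\Z$, which is absurd.

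Hence uniqueness up to conjugacy cannot be proved for $H^\alpha_n$: there are two classes, just as for $\widetilde{A}/H_n\times\T$, where the paper itself records two. The paper's proof has the same lacuna: it derives $t=\frac n2(k\alpha+b)$ with $b\in\Z$ free but normalizes only $k$. So this is a defect of the statement as much as of your argument. For $H_n$ and $H^1_n$, the reduction to $k=1$ does give uniqueness.
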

\begin{proof}
    Since $\widetilde A\times\R$ is simply connected and the maximal compact subgroup of $A$, namely $SO(2)$, lifts to $\R$, the group $\widetilde A\times\R$ is torsion-free. Hence it has no subgroups of order $2$, and we only examine the central quotients $G_\Gamma=(\widetilde A\times\R)/\Gamma$, with $\Gamma\in\{H_n,H^1_n,H^\alpha_n\}$ contained in $Z(\widetilde A\times\R)=\exp(\R e_3)\times\R$.
    
    Let $\tilde{a} = \mathfrak t \oplus \mathfrak p=\R\langle e_3\rangle\oplus\R\langle e_1,e_2\rangle$ be the Cartan decomposition, normalized so that $\operatorname{Ad}_{\exp(s e_3)}$ rotates $\mathfrak p$ by the angle $s$ and $\exp(2\pi e_3)$ generates $Z(SL(2,\R))$. Let $[g]\in G_\Gamma$ have order $2$ and write $g=\exp(X)\exp(t e_4)$, with $X=x_1e_3+P$, $P\in\mathfrak p$, $t\in\R$. Then $g^2\in\Gamma$, so the component of $g^2$ in $\tilde{A}$ lies in $\exp(\R e_3)$.
    
    The projection $\bar u$ of $\exp(X)$ on $SL(2,\R)$ satisfies $\bar u^2\in T=SO(2)$. In $SL(2,\R)$ the only elements whose square is in $SO(2)$ are conjugate of elements of $SO(2)$. Conjugacy classes lift along the central extension $\widetilde A\to SL(2,\R)$, so up to conjugacy $P=0$ and $g=\exp(y e_3)\exp(t e_4)$; since $\exp|_{\R e_3}$ is injective, $y$ is uniquely determined. From $g^2=\exp(2y e_3)\exp(2t e_4)\in\Gamma$, we can see that:
    
    \begin{itemize}
    
    \item $\Gamma=H_n$: the $e_4$-component is trivial, so $t=0$ and $2y\in 2n\pi\Z$, giving $y=n\pi k$ and $g=\exp(n\pi k\,e_3)$;
    
    \item $\Gamma=H^1_n$: the single generator forces $(2y,2t)=m\,n(2\pi,1)$, whence $m=k$, $t=nk/2$ and $g=\exp(n\pi k\,e_3+\tfrac{nk}{2}e_4)$;
    
    \item $\Gamma=H^\alpha_n$: the two generators give $(2y,2t)=k\,n(2\pi,\alpha)+b\,n(0,1)$, whence $t=\tfrac n2(k\alpha+b)$ and $g=\exp(n\pi k\,e_3)\exp(\tfrac n2(k\alpha+b)e_4)$, $b\in\Z$.
\end{itemize}

Since $\operatorname{Ad}_g$ rotates $\mathfrak p$ by $n\pi k$ and fixes $e_3,e_4$, the element $g$ is not central if and only if $nk$ is odd, in which case $k$ is odd. By considering $[g]$ in the quotient $G_\Gamma$, we can assume $k=1$
\end{proof}

After taking quotient by translations along the $\R$ factor, we see that non-central subgroups of order $2$ of $\Tilde{A} / H_n \times \T$ are obtained as products of those given in Proposition \ref{prop:sl2} with either $0$ or $\pi \in \T$.

{\small
\printbibliography
}

\end{document}